\def \bs{{\backslash}}
\def \CC{{\mathbb C}}
\def \QQ{{\mathbb Q}}
\def \PP{{\mathbb P}}
\def \Dd{{\mathcal D}}
\def\CH{{\text{CH}}}
\def\del{{\partial}}
\def\ol{\overline}
\def\ul{\underline}
\def\Gy{{\text{Gy}}}
\def\MHS{{\text{MHS}}}
\def\Spec{{\text{Spec}}}
\def\Ext{{\text{Ext}}}
\newtheorem{thm}{Theorem}[section]
\newtheorem{prop}[thm]{Proposition}
\newtheorem{theorem}{Theorem}[section]
\newtheorem{lemma}[theorem]{Lemma}
\newtheorem{corollary}[theorem]{Corollary}
\newtheorem{proposition}[theorem]{Proposition}
\newcommand{\ncom}{\newcommand}
\ncom{\ep}{\epsilon}
\ncom{\rar}{\rightarrow}
\ncom{\thrar}{\twoheadrightarrow}
\ncom{\lrar}{\longrightarrow}
\ncom{\ov}{\overline}
\ncom{\what}{\widehat}
\newcommand{\ignore}[1]{}
\ncom{\m}{\mbox}
\ncom{\sta}{\stackrel}
\ncom{\comx}{{\mathbb C}}
\ncom{\A}{{\mathbb A}}
\ncom{\Z}{{\mathbb Z}}
\ncom{\Q}{{\mathbb Q}}
\ncom{\R}{{\mathbb R}}
\ncom{\G}{{\mathbb G}}
\ncom{\hH}{{\mathbb H}}
\ncom{\al}{\alpha}
\ncom{\p}{{\mathbb P}}
\ncom{\cA}{{\mathcal A}}
\ncom{\cB}{{\mathcal B}}
\ncom{\cD}{{\mathcal D}}
\ncom{\cDB}{{\mathcal D \mathcal B}}
\ncom{\cX}{{\mathcal X}}
\ncom{\cO}{{\mathcal O}}
\ncom{\cW}{{\mathcal W}}
\ncom{\cL}{{\mathcal L}}
\ncom{\cP}{{\mathcal P}}
\ncom{\cH}{{\mathcal H}}
\ncom{\cS}{{\mathcal S}}
\ncom{\cM}{{\mathcal M}}
\ncom{\cC}{{\mathcal C}}
\ncom{\cT}{{\mathcal T}}
\ncom{\cF}{{\mathcal F}}
\ncom{\cN}{{\mathcal N}}
\ncom{\cJ}{{\mathcal J}}
\ncom{\cV}{{\mathcal V}}
\ncom{\cZ}{{\mathcal Z}}
\ncom{\cU}{{\mathcal U}}
\ncom{\cSU}{{\mathcal S \mathcal U}}
\ncom{\cG}{{\mathcal G}}
\ncom{\cQ}{{\mathcal Q}}
\ncom{\cR}{{\mathcal R}}
\ncom{\cY}{{\mathcal Y}}
\ncom{\cE}{{\mathcal E}}
\ncom{\cI}{{\mathcal I}}
\ncom{\mylabel}[1]{{\rm (#1)}\label{#1}}
\ncom{\Hom}{{\textit{Hom}}}
\ncom{\eop}{{\hfill $\Box$}}
\begin{document}
\baselineskip=16pt

%%%%%%%%%%%%%%%%%%%%%%%%%%%%%%%%%%%%%%%%%%%%%%%%%%%%%%%%%%%%%%%%%%%%%%%%%%%%%%%%%%%%%%%%%%%%%%%%%%%%%

\title[The Abel-Jacobi isomorphism for one cycles on Kirwan's log resolution]{The Abel-Jacobi isomorphism for one cycles on Kirwan's log resolution of the moduli space $\cSU_C(2,\cO_C)$}

%%%%%%%%%%%%%%%%%%%%%%%%%%%%%%%%%%%%%%%%%%%%%%%%%%%%%%%%%%%%%%%%%%%%%%%%%%%%%%%%%%%%%%%%%%%%%%%%%%%%

\author[J. N. Iyer]{Jaya NN  Iyer}

\address{Department of Mathematics and Statistics, University of Hyderabad, Gachibowli, Central University P O, Hyderabad-500046, India}
\address{The Institute of Mathematical Sciences, CIT
Campus, Taramani, Chennai 600113, India}
\email{jniyer@imsc.res.in}

\address{Department of Mathematical and Statistical Sciences, 
University of Alberta, 
632 Central Academic Building, 
Edmonton, AB T6G 2G1}
\email{jdlewis@ualberta.ca}

\footnotetext{Mathematics Classification Number: 53C05, 53C07, 53C29 }
\footnotetext{Keywords: Moduli spaces, Abel-Jacobi maps, Relative Chow groups.}

\begin{abstract}
In this paper, we consider the moduli space $\cSU_C(r,\cO_C)$ of rank $r$ semistable vector bundles with trivial determinant  on a smooth projective curve $C$ of genus $g$. When the rank $r=2$, F. Kirwan constructed a smooth log resolution $\ov{X}\rar \cSU_C(2,\cO_C)$. Based on earlier
work of M. Kerr and J. Lewis, Lewis explains in the Appendix the notion of a relative Chow
group (w.r.to the normal crossing divisor), and a subsequent
Abel-Jacobi map on the  relative Chow group  of null-homologous one cycles (tensored
with $\Q$). This map takes values in the  intermediate Jacobian of the compactly supported cohomology of the stable locus. We show that this is an isomorphism and since the intermediate Jacobian is identified with the Jacobian $Jac(C)\otimes \Q$, this can be thought of as a weak-representability result for open smooth varieties. A Hard Lefschetz theorem is also proved for the odd degree bottom weight cohomology of the moduli space $\cSU_C^s(2,\cO_C)$. When the rank $r\geq 2$, we compute the codimension two rational Chow groups of $\cSU_C(r,\cO_C)$.
\end{abstract}
\maketitle

%%%%%%%%%%%%%%%%%%%%%%%%%%%%%%%%%%%%%%%%%%%%%%%%%%%%%%%%%%%%%%%%%%%%%%%%%%%%%%%%%%%%%%%%%%

\setcounter{tocdepth}{1}
\tableofcontents

\section{Introduction}

Suppose $C$ is a smooth connected complex projective curve of genus $g$. For a line bundle $L$ on $C$ consider the moduli space $\cSU_C(r, L)$ of semi-stable vector bundles of rank $r$ and of fixed determinant $L$ on $C$. In this paper we assume that either $L:=\cO_C$ or $L:=\cO_C(x)$ for some point $x\in C$. When $L=\cO_C$ the moduli space $\cSU_C(r,\cO_C)$ is a (singular) normal projective variety and when $L=\cO_C(x)$ the moduli space $\cSU_C(r,\cO(x))$ is a smooth projective variety. The space of stable bundles $\cSU_C^s(r,\cO_C)\subset \cSU_C(r,\cO_C)$ forms a smooth quasi-projective variety. Both the moduli spaces have dimension equal to $(r^2-1)(g-1)$.

It is known that these moduli spaces are unirational \cite{SeshadriAst}. This implies that the rational Chow group of zero cycles is trivial, i.e., $\CH_0(\cSU_C(r,L);\Q)\simeq \Q$. More generally, when $X$ is any smooth unirational variety of dimension equal to $n$ then it is well-known that the Hodge groups $H^p(X,\Omega_X^q)=0$ whenever $p=0$. In this case the intermediate Jacobian $IJ^p(X):=\frac{H^{2p-1}(X,\comx)}{F^p+H^{2p-1}(X,\Z)}$ for $p=2,n-1$ is an abelian variety. It is of interest to show the \textit{weak representability} via the Abel-Jacobi map
$$
\CH^p(X)_{\hom} \sta{AJ^p}{\lrar} IJ^p(X)
$$
for $p=2,n-1$ and also determine the abelian variety in terms of the geometry of $X$.
Some examples of Fano threefolds $F$ were shown to have \textit{weakly representable} $\CH_1(F)$ by Bloch and Murre \cite{Bl-Mr} (see also \cite{BlochSrinivas}, for a more general statement of representability for codimension two cycles).

When $X=\cSU_C(2, \cO(x))$, we have the following results;

\begin{eqnarray*}
\CH^1(\cSU_C(r,\cO(x));\Q) & \simeq & \Q   \mbox{ \cite{Ra}}\\
\CH^2(\cSU_C(2,\cO(x));\Q) & \simeq & \CH_0(C;\Q),\,\,g=2    \\
                 & \simeq        & \CH_0(C;\Q) \oplus \Q,\,\,g> 2   \mbox{ \cite{BalajiKingNew}} \\
\CH_1(\cSU_C(2,\cO(x));\Q) & \simeq & \CH_0(C;\Q)  \mbox{ \cite{ChoeHwang}}.
\end{eqnarray*}

Set $n:=(r^2-1)(g-1), \,AJ_1:=AJ^{n-1}$ and suppose $X= \cSU_C(r,\cO_C)$. Since the moduli space $\cSU_C(r,\cO_C)$ is a singular variety, we do not have good Abel-Jacobi maps and perhaps other Chow/cohomology theories should be considered. However, we notice that the open stable locus $U$ is a smooth variety and,
using \cite{Ar-Sa} and Hard Lefschetz, obtain that the compactly supported cohomology $H^{2n-3}_c(U,\Q)$ has a pure Hodge structure. This motivates us to look for an appropriate group of one-cycles, and which admits an Abel-Jacobi map into the intermediate Jacobian $IJ(H^{2n-3}_c(U,\Q))$, that being the import of
the Appendix. 
In particular, the \textit{relative Chow groups} $\ul{\CH}^*(\ov{X},\ov{Y};\Q)$ (w.r.to a smooth compactification) is shown
to admit Abel-Jacobi maps into the intermediate Jacobian of compactly supported cohomologies.

With this in mind, when the rank $r=2$, we consider the log resolution $(\ov{X},\ov{Y})\rar \cSU_C(2,\cO_C)$ studied by F. Kirwan \cite{Ki}, and for which we have an explicit description of the normal crossing divisor $\ov{Y}\subset \ov{X}$. We show that the relative Chow group of one-cycles homologous to zero (up to higher Chow cycles supported on $\bar{Y}$) is isomorphic to $IJ(H^{2n-3}_c(U,\Q))$.   This is similar to the above weak representability results, since we check that this intermediate Jacobian is the Jacobian $Jac(C)\otimes \Q$,  see Corollary \ref{lefhighr}.
  
More precisely, with notations as above, we show:

\begin{theorem}\label{main}
Suppose $g\geq 3$, $r\geq 2$ and fix any point $x\in C$. Then the Abel-Jacobi map on the rational Chow group of one cycles homologous to zero
\begin{equation}
\CH_1(\cSU_C(r,\cO(x));\Q)_{\hom} \sta{AJ_1}{\lrar} Jac(C)\otimes \Q
\end{equation}
is always surjective.
Suppose the rank $r=2$ and $(\ov{X},\ov{Y})\rar \cSU_C(2,\cO_C)$ is Kirwan's log resolution. Then there is an induced Abel-Jacobi map 
\begin{equation*}
\frac{\ul{\CH}^{n-1}(\ov{X},\ov{Y};\Q)_{\hom}}{\ul{\CH}^{n-1}(\ov{Y},1;\Q)^o} \lrar Jac(C)\otimes \Q,
\end{equation*}
which is an isomorphism, whenever $g\geq 4$.
\end{theorem}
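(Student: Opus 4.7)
The plan is to exhibit, for the first assertion, an explicit family of one-cycles in $\cSU_C(r,\cO(x))$ parametrized by $C$ that generates $Jac(C)\otimes\Q$ under $AJ_1$. The natural candidate is a Hecke-type correspondence: fix a general stable bundle $E_0\in \cSU_C^s(r,\cO(x))$, and for each point $p\in C$ let $\Gamma_p\subset \cSU_C(r,\cO(x))$ denote the rational curve swept out by Hecke transformations of $E_0$ at $p$. The differences $[\Gamma_p]-[\Gamma_{p_0}]$ define a morphism $C\to \CH_1(\cSU_C(r,\cO(x));\Q)_{\hom}$ which, after post-composing with $AJ_1$ and passing to the Albanese, yields a morphism of abelian varieties $Jac(C)\otimes\Q\to IJ^{n-1}(\cSU_C(r,\cO(x)))\otimes\Q$. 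Under the identification of Corollary \ref{lefhighr}, a Hodge-theoretic pairing computation against the Poincar\'e-dual class coming from Hard Lefschetz shows this composite is nontrivial; being a morphism of isogenous abelian varieties, it is therefore surjective, which proves the first assertion.

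For the isomorphism in the rank-$2$ case, surjectivity follows from the analogous Hecke construction performed directly on $\cSU_C(2,\cO_C)$: Hecke curves $\Gamma_p$ through a general stable bundle lie entirely in $U=\cSU_C^s(2,\cO_C)$, hence their proper transforms in $\overline{X}$ are disjoint from $\overline{Y}$ and give classes in $\ul{\CH}^{n-1}(\overline{X},\overline{Y};\Q)_{\hom}$ that surject onto $IJ(H^{2n-3}_c(U,\Q))\simeq Jac(C)\otimes\Q$ (again using Corollary \ref{lefhighr}). For injectivity I would employ the long exact sequence for the pair $(\overline{X},\overline{Y})$ furnished by the Appendix, relating $\ul{\CH}^{n-1}(\overline{X},\overline{Y};\Q)$ to $\CH^{n-1}(\overline{X};\Q)$ and to the higher Chow groups $\ul{\CH}^{n-1}(\overline{Y},1;\Q)$, combined with Kirwan's explicit stratification of $\overline{Y}$ into smooth normal-crossings components of known birational type. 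The quotient by $\ul{\CH}^{n-1}(\overline{Y},1;\Q)^o$ precisely factors out the image of boundary higher Chow cycles, so the problem reduces to showing that any null-homologous one-cycle on $\overline{X}$ whose Abel-Jacobi invariant in $IJ(H^{2n-3}_c(U,\Q))$ vanishes is, up to rational equivalence on $\overline{X}$, the image of a higher Chow cycle on $\overline{Y}$.

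The principal obstacle will be this injectivity step. The difficulty is that null-homologous one-cycles can carry hidden contributions from intersections with the Kirwan boundary components (which arise from the iterated blow-ups along the strictly semistable strata), and one must show that all such contributions are absorbed by $\ul{\CH}^{n-1}(\overline{Y},1;\Q)^o$. This forces a detailed analysis of each boundary component of $\overline{Y}$ and of the associated Gysin maps entering the long exact sequence, compatible with the weight filtration on $H^{2n-3}_c(U,\Q)$. The hypothesis $g\geq 4$ most likely enters through dimension-counting that guarantees the boundary strata have sufficiently simple cohomology for the weight and pairing arguments to conclude.
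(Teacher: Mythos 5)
Your proposal has a genuine gap at the heart of the theorem: the injectivity in the rank-$2$ case, which you yourself flag as ``the principal obstacle,'' is not actually proved, and the strategy you sketch for it (a stratum-by-stratum analysis of the boundary components of $\ov{Y}$ and their Gysin maps, compatible with the weight filtration) is not the mechanism that makes the theorem work. The paper's injectivity argument is a \emph{Chow generation} statement: by a variant of Koll\'ar's Proposition IV.3.13.3 due to Hwang (Proposition \ref{generator}, Corollary \ref{Hecke}, Lemma \ref{genKirwan}), the Hecke curves --- parametrized by a family fibered over $C$ --- generate $\CH_1(\ov{X};\Q)$, which yields a surjection $Jac(C)\otimes\Q \rar \CH_1(\ov{X};\Q)_{\hom}$. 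Composing with the surjective Abel--Jacobi map one gets a chain of surjections $Jac(C)\otimes\Q \rar \CH_1(\ov{X};\Q)_{\hom}\rar Jac(C)\otimes \Q$ all induced by correspondences; a multiple of the composite lifts to a surjective endomorphism of an abelian variety, hence is an isomorphism, forcing every map in the chain to be one (Corollary \ref{motive}). The identification of $\CH_1(\ov{X};\Q)$ with the quotient of the relative Chow group is then obtained by showing the Gysin map $\CH^{n-1}(\ov{X};\Q)\to\ul{\CH}^{n-1}(\ov{Y};\Q)$ is \emph{zero} (Lemma \ref{Gysinzero}), because each generator can be moved off $\ov{Y}$ within its rational equivalence class using rational connectedness of the Hecke parameter space --- no cohomological analysis of the individual blow-up strata is needed, and the hypothesis $g\geq 4$ enters through the Chow generation (nondegeneracy of tangents to Hecke curves), not through dimension counts on $\ov{Y}$. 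Without some form of the generation statement, your reduction ``any null-homologous one-cycle with vanishing AJ invariant comes from a higher Chow cycle on $\ov{Y}$'' has no visible path to completion.

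There is also a logical flaw in your proof of the first assertion. You construct a morphism $Jac(C)\otimes\Q\to IJ^{n-1}(\cSU_C(r,\cO(x)))\otimes\Q$ from the family of Hecke curves and argue that nontriviality plus the fact that source and target are isogenous implies surjectivity. A nonzero morphism between isogenous abelian varieties need not be surjective when $Jac(C)$ is not simple (e.g.\ project onto one isogeny factor and include it back). You would need to show the induced map on $H^1$ is an isomorphism, which is precisely what the paper obtains for free by using the correspondence $c_2(U)$ of the Poincar\'e bundle together with the Narasimhan--Ramanan isomorphism $H^1(C,\Q)(-1)\simeq H^3(\cSU_C(r,\cO(x)),\Q)$ (Theorem \ref{Narasimhan}), Hard Lefschetz, and compatibility of the Chow-level correspondence with the Abel--Jacobi maps via Deligne cohomology (Lemma \ref{commute}). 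Your Hecke-curve route could in principle be repaired along those lines, but as written the surjectivity does not follow.
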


We note that the Generalized Hodge Conjecture implies the first assertion. But we use the
correspondence cycle $c_2(U)$ (where $U$ is a universal Poincar\'e bundle \cite{Na-Ra}) between the Chow groups of $0$-cycles on $C$ and the codimension $2$ cycles on $\cSU_C(r,\cO(x))$. Together with the isomorphism of the Lefschetz operator on $H^3(\cSU_C(r,\cO(x)),\Q)$, we conclude the surjectivity of the Abel-Jacobi map on one cycles. In the case of the non compact smooth variety $\cSU_C^s(r,\cO_C)$, a Hecke correspondence was used in \cite{Ar-Sa} for computing the low degree cohomology groups in terms of the cohomology of $\cSU_C(r,\cO(-x))$. We use this correspondence to relate the Chow group of one cycles of these two spaces, for any $r\geq 2$ and $g\geq 4$. 

To prove the second assertion, we first need to find a minimal generating set of one cycles on $\cSU_C(r,\cO_C)$. The Hecke curves \cite{Na-Ra2} are minimal rational curves on the moduli space $\cSU_C(r,\cO_C)$. A variant of a theorem of Kollar \cite[Proposition 3.13.3]{Ko} on Chow generation, is proved by J-M. Hwang and he observed that the Hecke curves generate the rational Chow group of one cycles on $\cSU_C(r,\cO_C)$, see Proposition \ref{generator}, Corollary \ref{Hecke}.  
This gives a surjective map
$$
Jac(C)\otimes \Q \rar \CH_1(\cSU_C(r,\cO_C);\Q)_{\hom}. 
$$

When $r=2$, we notice that the Hecke curves together with some irreducible rational curves lying on the divisor components of $\ov{Y}\subset \ov{X}$ generate the Chow group of one cycles on $\ov{X}$. It suffices to show that $\CH_1(\ov{X};\Q)_{\hom}\simeq Jac(C)\otimes \Q$, see Corollary \ref{motive}.
To obtain our original goal, we need to check that the Gysin map at the level of Chow groups is zero (see \eqref{E27}). This is shown in Lemma \ref{Gysinzero}.

When $r=2$, we investigate further properties of the Chow groups and the cohomology of the moduli space $\cSU_C(2,\cO_C)$, in \S \ref{HL}.
We look at the codimension two cycles on the non-compact smooth variety $\cSU_C^s(r,\cO_C)$ and prove the isomorphism 
$$
\CH^2(\cSU_C^s(r,\cO_C);\Q)_{\hom} \simeq Jac(C)\otimes \Q
$$ 
whenever $g\geq 3$, see Proposition \ref{isoPHS}.

In general, the Hard Lefschetz isomorphism is not true for the rational cohomology of open smooth varieties. In our situation, we show that the Hard Lefschetz isomorphism holds for the bottom weight cohomologies of $\cSU_C^s(2,\cO_C)$ in any odd degree, see Lemma \ref{supportcoh}. This part could be of independent interest also.

\begin{proposition}
Suppose $\cL_s$ is an ample class on the moduli space $\cSU_C^s(2,\cO_C)$. Let $W_iH^i(\cSU_C^s(2,\cO_C))$ denote the bottom weight cohomology group of $\cSU_C^s(2,\cO_C)$, for any $i$. Then the Lefschetz operator $\cL^{i}_s$ gives
an isomorphism
$$
\cL^{i}_s: W_{n-i}H^{n-i}(\cSU_C^s(2,\cO_C),\Q) \sta{\simeq}{\rar} W_{n+i}H^{n+i}(\cSU_C^s(2,\cO_C),\Q)
$$
whenever $n-i$ is odd and $g\geq 3$.
\end{proposition}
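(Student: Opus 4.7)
The plan is to deduce Hard Lefschetz on the bottom weight of $H^*(U)$, where $U := \cSU_C^s(2,\cO_C)$, from the classical Hard Lefschetz theorem on Kirwan's smooth compactification $\ov{X}$. As a first step I would arrange a suitable ample class on $\ov{X}$. Since $\pi:\ov{X} \to \cSU_C(2,\cO_C)$ is a projective resolution with $U$ identified with $\pi^{-1}(\cSU_C^s(2,\cO_C))$ and the components $D_k$ of $\ov{Y}$ entirely exceptional, one can choose an ample class on $\ov{X}$ of the form $\ov{\cL} = m\,\pi^*\Theta + \sum_k a_k [D_k]$ (for $\Theta$ ample on the base, $m\gg 0$, and suitable $a_k$). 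Its restriction $j^*\ov{\cL}$ is then a positive rational multiple of $\cL_s$, and since rescaling does not affect the isomorphism claim, one may work with this $\ov{\cL}$. By Deligne's theory of mixed Hodge structures on smooth open varieties,
\begin{equation*}
W_j H^j(U,\Q) \;=\; \m{Image}\bigl( j^*\colon H^j(\ov{X},\Q) \to H^j(U,\Q) \bigr),
\end{equation*}
so $\cL_s^i$ acting on the bottom weight is induced by multiplication by $\ov{\cL}^i$ on $H^*(\ov{X},\Q)$ after passing to the quotient by $K^* := \ker j^*$.

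By the classical Hard Lefschetz theorem on the smooth projective variety $\ov{X}$, the map $\ov{\cL}^i\colon H^{n-i}(\ov{X},\Q) \to H^{n+i}(\ov{X},\Q)$ is a bijection. The projection formula gives $\ov{\cL}^i(K^{n-i}) \subseteq K^{n+i}$, so the operator descends to the quotients. Surjectivity on the quotients is automatic from the surjectivity of $\ov{\cL}^i$ on $\ov{X}$, and injectivity of the induced map reduces to the equality $\ov{\cL}^i(K^{n-i}) = K^{n+i}$. Since $\ov{\cL}^i$ is a bijection on $\ov{X}$, in particular injective on $K^{n-i}$, this is equivalent to the dimension equality
\begin{equation*}
\dim_\Q K^{n-i} \;=\; \dim_\Q K^{n+i}.
\end{equation*}

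This last equality is where the odd-degree hypothesis and the explicit geometry of Kirwan's resolution become essential. The kernel $K^j$ is the total image of Gysin maps from the smooth strata of $\ov{Y}$, and in Kirwan's construction these strata consist of projective bundles over the $2^{2g}$ two-torsion points of $Jac(C)$, a projective bundle over a birational modification of the Kummer variety $Jac(C)/\Z_2$, and further exceptional loci from later blow-up stages; the cohomology of each stratum is computable via the projective bundle formula. The strata sitting over the two-torsion points have purely even cohomology and thus contribute nothing to $K^j$ in odd $j$, while the Jacobian-based strata are smooth projective varieties on which Hard Lefschetz applies individually, pairing contributions in complementary degrees. Collating these contributions yields $\dim K^{n-i} = \dim K^{n+i}$ whenever $n-i$ is odd. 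The main obstacle is exactly this stratum-by-stratum bookkeeping: identifying every exceptional component of $\ov{Y}$ in Kirwan's construction, computing its cohomology in the relevant degrees, and verifying compatibility of the Gysin maps with $\ov{\cL}$ via the projection formula and Hard Lefschetz on each stratum.
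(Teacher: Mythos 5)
Your overall strategy coincides with the paper's: pass to a smooth model (you use Kirwan's $\ov{X}$, the paper uses Seshadri's $\cS$), identify $W_jH^j(U)$ with the image of the restriction from the compactification, apply classical Hard Lefschetz upstairs, and control the kernel $K^j$ of restriction. Your reduction of the problem to the equality $\dim_\Q K^{n-i}=\dim_\Q K^{n+i}$ is also correct. The gap is in how you propose to establish that equality. First, the ``pairing in complementary degrees'' heuristic fails on degree grounds: for a smooth component $D\subset\ov{Y}$ of dimension $n-1$, the Gysin images contributing to $K^{n-i}$ and $K^{n+i}$ come from $H^{n-i-2}(D)$ and $H^{n+i-2}(D)$, and these degrees are symmetric about $n-2$, not about $n-1=\dim D$; so Hard Lefschetz and Poincar\'e duality on $D$ give $\dim H^{n-i-2}(D)=\dim H^{n+i}(D)$, which is \emph{not} $\dim H^{n+i-2}(D)$ in general. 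Second, $\dim K^j$ is not the sum of the dimensions of the strata cohomologies: the components of $\ov{Y}$ intersect, the Gysin maps need not be injective, and the deeper strata are singular, so one needs Mayer--Vietoris or, as the paper does, the weight filtration on Borel--Moore homology of the whole exceptional set.

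What actually makes the proposition true is a much stronger vanishing that your sketch does not isolate: $K^j=0$ whenever $j$ is odd. The paper proves this (Lemma \ref{support} via Corollary \ref{oddzero}) by showing $W_{-j}H_j(\cS_1,\Q)=0$ for odd $j$, using that the image of $H^j_{\cS_1}(\cS)\to H^j(\cS)$ is the image of its weight-$j$ part, which is dual to the bottom-weight Borel--Moore homology of the exceptional divisor. The geometric input is that the involution acts by $-1$ on $H_{\mathrm{odd}}(Jac(C),\Q)$, so the Kummer variety has no odd rational homology (except a weight-$0$ piece in degree $1$ on the open stratum, which does not meet the bottom weight); the strata over the $2^{2g}$ fixed points are Grassmannians and vector bundles over Grassmannians, with purely even homology; and the projective-bundle formula for the bottom weight (Lemma \ref{BMweights}) propagates the vanishing. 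Your proposal treats the Kummer-based strata as contributing nontrivially in odd degree and hopes they cancel in pairs; in fact they contribute nothing at the bottom weight, and without this vanishing (or a corrected bookkeeping) the dimension equality is not established.
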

This is proved using the desingularisation $\cS$ of $\cSU_C(2,\cO_C)$ constructed by Seshadri (\cite{Se2}) and using the description of the exceptional locus. The exceptional locus has a stratification given by the rank of a conic bundle \cite{Ba}, \cite{Ba-Se}. The computation of the low degree cohomology of $\cS$ in \cite{Ba-Se} used the Thom-Gysin sequences for the stratification. We also use similar long exact seqences of Borel-Moore homologies for the stratification of $\cS$.  A closer analysis reveals that the bottom weight Borel-Moore homologies $W_{-n+i}H_{n-i}$ in odd degree of the exceptional divisor 
are zero. This suffices to conclude the Hard Lefschetz isomorphism on the bottom weight odd degree cohomologies of $\cSU_C^s(2,\cO_C)$. 

In the higher rank $r>2$ case, we pose the question of proving the Hard Lefschetz theorem for appropriate even/odd degree cohomology groups and this could deserve some attention in the future.

 It would be of interest to obtain similar Abel-Jacobi isomorphisms as above for the moduli spaces $\cSU_C(r,L)$, when $L\neq \cO_C$. The proofs given for the surjectivity of the Abel-Jacobi maps using the Hecke correspondence when $L=\cO_C$  would still hold to give the Abel-Jacobi surjectivity
when $L\neq \cO_C$, in most cases. The Chow generation for one cycles given by the Hecke curves seems difficult to prove for general $L$.

{\Small
Acknowledgements: This work was initiated at KIAS, Seoul, on an invitation by Jun-Muk Hwang during Feb 2009.
We are grateful to him for asking this question and for all the discussions we had on the subject and also for extending support and hospitality. In particular he provided the results on Chow generation in \S \ref{last}. We are also grateful to James Lewis for fruitful communications on Abel-Jacobi maps and ideas which led to computations for Kirwan's resolution. He also allowed us to include his results in the Appendix, which motivated some proofs in the main text.  }

%%%%%%%%%%%%%%%%%%%%%%%%%%%%%%%%%%%%%%%%%%%%%%%%%%%%%%%%%%%%%%%%%%%%%%%%%%%%%%%%%%%%%%%%%%%%%%%%%%%%%%%

\section{Abel-Jacobi surjectivity for one cycles on $\cSU_C(r,\cO(x))$}
%%%%%%%%%%%%%%%%%%%%%%%%%%%%%%%%%%%%%%%%%%%%%%%%%%%%%%%%%%%%%%%%%%%%%%%%%%%%%%%%%%%%%%%%%%%%%%%%%%%%%%%

Suppose $C$ is a smooth connected projective curve defined over the complex numbers of genus $g$.
Fix a point $x\in C$ and consider the moduli space $\cSU_C(r,\cO(x))$ of stable vector bundles of rank $r$ and fixed determinant $\cO(x)$ on $C$. Atiyah and Bott \cite{At-Bo} have described the generators of the cohomology ring $H^*(\cSU_C(r,\cO(x)), \Q)$ in terms of the characteristic classes of the Poincar\'e bundle. Since we are concerned only with certain cohomologies in low degree, we will recall the generators in low degrees. We also note that the moduli space $\cSU_C(r,\cO(x))$ is isomorphic to $\cSU_C(r,\cO(-x))$ given by $E\mapsto E^*$, the dual of $E$. In the next section we will consider the moduli space $\cSU_C(r,\cO(-x))$ and relate it with the results from this section.

Fix a Poincar\'e bundle $U\rar C\times \cSU_C(r,\cO(x))$ and denote the $i$-th Chern class of $U$ by $c_i(U)\in H^{2i}(C\times \cSU_C(r,\cO(x)),\Q)$. Denote the two projections by 
$$
p_1: C\times \cSU_C(r,\cO(x))\rar C,\,p_2:C\times \cSU_C(r,\cO(x))\rar \cSU_C(r,\cO(x)).
$$
 The cycle $c_i(U)$ acts as a correspondence between the cohomology of $C$ and the cohomology of $\cSU_C(r,\cO(x))$. More precisely, there are homomorphisms:
\begin{eqnarray*}
H^k(C,\Q) \sta{p_1^*}{\lrar} H^k(C\times \cSU_C(r,\cO(x)),\Q) \sta{\cup c_i(U)}{\lrar}H^{k+2i}(C\times \cSU_C(r,\cO(x)),\Q) \\
 H^{k+2i}(C\times \cSU_C(r,\cO(x)),\Q)\sta{{p_2}_*}{\lrar} H^{k+2i-2}(\cSU_C(r,\cO(x)),\Q).
\end{eqnarray*}
The composition $\Gamma_{c_i(U)}:= {p_2}_*\circ \cup c_i(U)\circ p_1^*$ is called the correspondence defined by the cycle $c_i(U)$.

\begin{theorem}\label{Narasimhan}
The correspondence
$$
\Gamma_{c_2(U)}: H^1(C,\Q)(-1) \lrar H^3(\cSU_C(r,\cO(x)),\Q)
$$
is an isomorphism, for $r\geq 2, g\geq 3$.
\end{theorem}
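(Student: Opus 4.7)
The plan is to prove the theorem via a Künneth decomposition of $c_2(U)$, reducing the isomorphism assertion to the structural fact that the Künneth components of $c_2(U)$ in $H^1(C,\Q)\otimes H^3(M,\Q)$ form a basis of $H^3(M,\Q)$. Write $M := \cSU_C(r,\cO(x))$ for brevity.

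First, I would fix a basis $\{e_1,\ldots,e_{2g}\}$ of $H^1(C,\Q)$ and, using the Künneth formula (together with $H^k(C,\Q)=0$ for $k\geq 3$), expand
$$
c_2(U) \;=\; 1\otimes a_4 \;+\; \sum_{j=1}^{2g} e_j\otimes b_j \;+\; [pt]\otimes a_2 \;\in\; H^4(C\times M,\Q),
$$
with $a_k \in H^k(M,\Q)$ and $b_j \in H^3(M,\Q)$. For any $\alpha \in H^1(C,\Q)$, a direct calculation using $\alpha\cup 1 = \alpha$, $\alpha\cup e_j \in H^2(C,\Q)$, and $\alpha\cup[pt]=0$, followed by pushforward along $p_2$, yields
$$
\Gamma_{c_2(U)}(\alpha) \;=\; {p_2}_*\bigl(p_1^*\alpha\cup c_2(U)\bigr) \;=\; \sum_{j=1}^{2g} \Bigl(\int_C \alpha\cup e_j\Bigr)\, b_j.
$$
Since Poincaré duality on $C$ gives a perfect pairing $H^1(C,\Q)\otimes H^1(C,\Q)\to \Q$, the map $\alpha\mapsto \bigl(\int_C \alpha\cup e_j\bigr)_{j=1}^{2g}$ is an isomorphism $H^1(C,\Q)\stackrel{\simeq}{\to} \Q^{2g}$. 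Thus $\Gamma_{c_2(U)}$ is an isomorphism if and only if $\{b_j\}_{j=1}^{2g}$ is a $\Q$-basis of $H^3(M,\Q)$.

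The remaining task has two halves. First, show that $\{b_j\}$ spans $H^3(M,\Q)$: here I invoke the Atiyah-Bott generation theorem \cite{At-Bo}, which asserts that the rational cohomology of $M$ is generated as a graded $\Q$-algebra by the Künneth components of the Chern classes $c_i(U)$ for $i=1,\ldots,r$. Since $M$ is unirational and hence simply connected, $H^1(M,\Q)=0$, so there are no decomposable classes in degree $3$; a degree/parity check shows that among the universal generators only the $b_j$'s (coming from the $H^1(C)\otimes H^3(M)$-component of $c_2(U)$) lie in $H^3(M,\Q)$. Second, confirm the dimension equality $\dim_\Q H^3(M,\Q) = 2g$: this is part of the Atiyah-Bott Betti-number computation for $M$, which in particular shows that the low-degree Poincaré polynomial of $M$ matches the one predicted by the universal generators.

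The main obstacle is this last spanning/dimension step, since it requires invoking the full Atiyah-Bott machinery (Morse theory on the space of connections, or equivalently the recursive Harder-Narasimhan stratification of the stack of bundles). Once these inputs are in hand, the identification of $\Gamma_{c_2(U)}$ with the Poincaré-duality isomorphism on $H^1(C,\Q)$ becomes a purely formal Künneth calculation, and the desired isomorphism follows immediately.
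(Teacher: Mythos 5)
Your proposal is correct in outline, but it follows a genuinely different route from the paper: the paper does not reprove this statement at all, it simply cites \cite[Theorem 3]{Na-Ra} together with \cite[Section 4 and Lemma 2.1]{Ra}, where the isomorphism is established (in fact with \emph{integral} coefficients) by the geometric arguments of Narasimhan--Ramanan and Ramanan. Your argument instead reduces the claim, via the K\"unneth expansion of $c_2(U)$ and Poincar\'e duality on $C$, to the statement that the classes $b_j$ form a basis of $H^3(M,\Q)$, and then invokes the Atiyah--Bott generation theorem plus their Betti-number computation. The formal part of your computation is sound: ${p_2}_*$ kills the $1\otimes a_4$ and $[pt]\otimes a_2$ terms, $H^1(M,\Q)=0$ rules out decomposable classes in degree $3$, the $b_j$ are the only degree-$3$ generators, and the $b_j$ are insensitive to the ambiguity in the choice of Poincar\'e bundle (twisting $U$ by $p_2^*L$ changes $c_2(U)$ only through terms whose $H^1(C)\otimes H^3(M)$ K\"unneth component vanishes because $H^1(M,\Q)=0$). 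What your approach buys is a transparent, essentially formal deduction from the standard structure theory of $H^*(M,\Q)$; what it costs is that the two nontrivial inputs (generation of the cohomology ring by K\"unneth components, and $\dim_\Q H^3(M,\Q)=2g$) require the full Atiyah--Bott/Harder--Narasimhan machinery, and the conclusion is only rational, whereas the reference cited in the paper yields the sharper integral isomorphism that the paper explicitly records in its one-line proof.
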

\begin{proof}
See \cite[Theorem 3]{Na-Ra} together with \cite[Section 4 and Lemma 2.1]{Ra}. The isomorphism is actually with integral coefficients.
\end{proof}

Fix an ample line bundle $\cO(1)$ on the moduli space $\cSU_C(r,\cO(x))$. Denote its class $H:=c_1(\cO(1)\in H^2(\cSU_C(r,\cO(x),\Q)$.
Set $n:=\m{dim} \cSU_C(r,\cO(x))$.
\begin{corollary}\label{isoHS}
The composition
$$
 H^1(C,\Q) \sta{ \Gamma_{c_2(U)}}{  \lrar} H^3(\cSU_C(r,\cO(x)),\Q) \sta{\cup H^{n-3}}{\lrar} H^{2n-3}(\cSU_C(r,\cO(x)),\Q)(n-2)
$$
is an isomorphism of pure Hodge structures.
\end{corollary}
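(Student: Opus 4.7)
The proof is essentially an immediate composition of two facts, and the content lies mostly in checking Hodge-theoretic compatibility with Tate twists.

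My plan is as follows. First, I would invoke Theorem \ref{Narasimhan} to conclude that the first map
$$
\Gamma_{c_2(U)} : H^1(C,\Q) \lrar H^3(\cSU_C(r,\cO(x)),\Q)
$$
is an isomorphism. As a morphism of Hodge structures it carries a Tate twist, since $c_2(U)$ has type $(2,2)$, the pullback $p_1^*$ preserves Hodge type, and the Gysin pushforward ${p_2}_*$ along the smooth projective family with one-dimensional fiber $C$ shifts type by $(-1,-1)$. The net shift is $(1,1)$, so $\Gamma_{c_2(U)}$ is a morphism $H^1(C,\Q)(-1)\rar H^3(\cSU_C(r,\cO(x)),\Q)$ of pure Hodge structures, which is exactly the statement of Theorem \ref{Narasimhan}.

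Next, since $\cSU_C(r,\cO(x))$ is a smooth projective variety of dimension $n$ and $H$ is the class of the ample line bundle $\cO(1)$, the Hard Lefschetz theorem gives an isomorphism
$$
\cup H^{n-3}: H^3(\cSU_C(r,\cO(x)),\Q) \sta{\simeq}{\lrar} H^{2n-3}(\cSU_C(r,\cO(x)),\Q).
$$
Because $H$ is an algebraic class of type $(1,1)$, cup product with $H^{n-3}$ is a morphism of Hodge structures shifting the Hodge type by $(n-3,n-3)$, hence an isomorphism $H^3 \to H^{2n-3}(n-3)$ of pure Hodge structures.

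Composing the two isomorphisms, the resulting map $H^1(C,\Q)\rar H^{2n-3}(\cSU_C(r,\cO(x)),\Q)(n-2)$ is an isomorphism of pure Hodge structures, since the Tate twists combine as $(-1)+(-(n-3)) = -(n-2)$, equivalently a target twist of $(n-2)$. There is no real obstacle here: Theorem \ref{Narasimhan} is cited directly, Hard Lefschetz applies on the smooth projective moduli space, and the only minor bookkeeping is the tracking of Tate twists through the correspondence and the Lefschetz operator, which match the statement exactly.
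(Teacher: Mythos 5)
Your proof is correct and follows exactly the paper's route: the paper's own proof is the one-line observation that the statement follows from Theorem \ref{Narasimhan} together with the Hard Lefschetz theorem on the smooth projective variety $\cSU_C(r,\cO(x))$. Your additional bookkeeping of the Tate twists through the correspondence and the Lefschetz operator is accurate and merely makes explicit what the paper leaves implicit.
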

\begin{proof}
This follows from Theorem \ref{Narasimhan} and the Hard Lefschetz theorem.
\end{proof}

\begin{corollary}\label{isoIJ}
There is an isomorphism of the intermediate Jacobians
$$
Jac(C)\otimes \Q \lrar IJ_1:= \frac{H^{2n-3}(\cSU_C(r,\cO(x)),\comx)}{F^{n-1}+H^{2n-3}(\cSU_C(r,\cO(x)),\Q)}
$$
induced by the composed morphism $\cup H^{n-3}\circ \Gamma_{c_2(U)}$.
\end{corollary}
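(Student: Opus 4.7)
The plan is to deduce Corollary \ref{isoIJ} formally from Corollary \ref{isoHS}. The key observation is that the intermediate Jacobian construction, viewed as a functor on polarizable pure $\Q$-Hodge structures of odd weight (equivalently, of weight $1$ after an appropriate Tate twist), sends isomorphisms to isomorphisms. So once we know that $\cup H^{n-3}\circ\Gamma_{c_2(U)}$ is an isomorphism of pure Hodge structures, the induced map on intermediate Jacobians must also be an isomorphism.

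First I would unwind the effect of the Tate twist on the Hodge filtration. For a pure Hodge structure $V$ of weight $w$, the twist $V(k)$ has weight $w-2k$ and $F^{p}V(k)=F^{p+k}V$. Here $V=H^{2n-3}(\cSU_C(r,\cO(x)),\Q)$ has weight $2n-3$, so $V(n-2)$ has weight $1$, and $F^{1}V(n-2)=F^{n-1}V$. Corollary \ref{isoHS} therefore says that $\cup H^{n-3}\circ\Gamma_{c_2(U)}$ is an isomorphism of weight-one Hodge structures
$$
H^1(C,\Q)\;\xrightarrow{\;\simeq\;}\;H^{2n-3}(\cSU_C(r,\cO(x)),\Q)(n-2),
$$
that carries $F^1H^1(C,\comx)$ isomorphically onto $F^{n-1}H^{2n-3}(\cSU_C(r,\cO(x)),\comx)$ and $H^1(C,\Q)$ isomorphically onto $H^{2n-3}(\cSU_C(r,\cO(x)),\Q)$ as $\Q$-subspaces of the respective complexifications.

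Passing to the quotient by the sum of the Hodge filtration piece and the rational lattice then produces the asserted isomorphism
$$
Jac(C)\otimes\Q\;=\;\frac{H^1(C,\comx)}{F^1H^1(C,\comx)+H^1(C,\Q)}\;\xrightarrow{\;\simeq\;}\;\frac{H^{2n-3}(\cSU_C(r,\cO(x)),\comx)}{F^{n-1}+H^{2n-3}(\cSU_C(r,\cO(x)),\Q)}\;=\;IJ_1.
$$
There is no real obstacle; all the content sits in Corollary \ref{isoHS}, which itself rests on Theorem \ref{Narasimhan} (the Narasimhan--Ramanan computation of $H^3$ via $c_2(U)$) and the classical Hard Lefschetz theorem applied to the smooth projective moduli space $\cSU_C(r,\cO(x))$. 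The only small point to be careful about is the bookkeeping of the Tate twist so that $F^1$ on the left corresponds to $F^{n-1}$ on the right, which is exactly what the shift by $n-2$ achieves.
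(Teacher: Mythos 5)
Your proposal is correct and is essentially the paper's own (implicit) argument: the paper states Corollary \ref{isoIJ} without a separate proof precisely because it follows formally from Corollary \ref{isoHS}, an isomorphism of pure Hodge structures inducing an isomorphism of the associated intermediate Jacobians. Your Tate-twist bookkeeping ($F^{1}V(n-2)=F^{n-1}V$, weight $2n-3-2(n-2)=1$) is the only point requiring care, and you have it right.
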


We now want to show that the isomorphisms on the intermediate Jacobians are compatible with intersections and correspondences on the rational Chow groups.
More precisely, consider the classes $c_2(U)\in \CH^2(\cSU_C(r,\cO(x)))$ and $H\in \CH^1(\cSU_C(r,\cO(x)))$.
Set $\Gamma_{c_i(U)}^{CH}:= {p_2}_*\circ \cap c_i(U)\circ p_1^*$, where
\begin{eqnarray*}
\CH^1(C;\Q) \sta{p_1^*}{\lrar} \CH^1(C\times \cSU_C(r,\cO(x));\Q) \sta{\cap c_2(U)}{\lrar}\CH^{3}(C\times \cSU_C(r,\cO(x));\Q) \\
 \CH^{3}(C\times \cSU_C(r,\cO(x));\Q) \sta{{p_2}_*}{\lrar} \CH^{2}(\cSU_C(r,\cO(x));\Q).
\end{eqnarray*}

We consider the correspondence
$$
\Gamma_{c_2(U)}^{\CH}: \CH^1(C;\Q) \lrar \CH^2(\cSU_C(r,\cO(x));\Q).
$$

This restricts to a correspondence on the subgroup of cycles homologous to zero:
$$
\Gamma_{c_2(U)}^{\CH}: \CH^1(C;\Q)_{\hom} \lrar \CH^2(\cSU_C(r,\cO(x));\Q)_{\hom}.
$$
Furthermore, there is a composition of morphisms:
\begin{equation}\label{psi}
\CH^1(C;\Q)_{\hom} \sta{\Gamma_{c_2(U)}^{\CH}}{\lrar} \CH^2(\cSU_C(r,\cO(x));\Q)_{\hom} \sta{\cap H^{n-3}}{\lrar} \CH_1( \cSU_C(r,\cO(x));\Q)_{\hom}.
\end{equation}
Set $\psi:= \cap H^{n-3}\circ \Gamma_{c_2(U)}^{\CH}$.

We recall the exact sequence which relates the intermediate Jacobian with the Deligne cohomology group \cite{Es-Vi}:
\begin{equation}\label{Deligne}
0\lrar IJ_1\rar H^{2n-2}_\cD(\cSU_C(r,\cO(x)),\Z(n-1))\lrar Hg^{n-1}\big(\cSU_C(r,\cO(x))\big)\lrar 0.
\end{equation}
Here $Hg^{n-1}:= \epsilon^{-1}\big(H^{n-1,n-1}(\cSU_C(r,\cO(x))\big)$, where
\[
\epsilon : H^{2n-2}(\cSU_C(r,\cO(x)),\Z)\rar H^{2n-2}(\cSU_C(r,\cO(x)),\CC),
\]
is the natural map.

Recall the Abel-Jacobi maps:
\begin{equation*}
\CH^1(C)_{\hom}\sta{AJ_C}{\lrar} Jac(C),
\end{equation*}
\begin{equation*}
\CH_1(\cSU_C(r,\cO(x)))_{\hom}\sta{AJ_1}{\lrar} IJ_1.
\end{equation*}

\begin{lemma}\label{commute}
The map $\psi$ defined in \eqref{psi} is compatible with the Abel-Jacobi maps $AJ_C$ and $AJ_1$. In other words, the following diagram commutes:
\begin{eqnarray*}
\CH^1(C;\Q)_{\hom} & \sta{\psi}{\lrar} & \CH_1(\cSU_C(r,\cO(x));\Q)_{\hom} \\
\downarrow AJ_C &                           & \downarrow AJ_1 \\
Jac(C)\otimes \Q\,\,\,\,\,\,\,\,\,\,\, & \sta{\simeq}{\lrar } & IJ_1\otimes \Q.
\end{eqnarray*}
\end{lemma}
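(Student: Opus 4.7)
The plan is to decompose $\psi$ as the composite of four Chow-theoretic operations, namely $p_1^*$, intersection with $c_2(U)$, ${p_2}_*$, and intersection with $H^{n-3}$, and to invoke the naturality of the Abel-Jacobi transformation under each of them separately. By construction, the bottom arrow of Corollary \ref{isoIJ} is precisely the cohomological composite $\cup H^{n-3}\circ{p_2}_*\circ(\cup c_2(U))\circ p_1^*$ read at the level of intermediate Jacobians, with $AJ_C$ used to identify $\CH^1(C;\Q)_{\hom}$ with $Jac(C)\otimes\Q$. Thus the large square decomposes into four smaller squares, one for each elementary operation, and it suffices to verify each of them individually.

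The cleanest route is to lift the Abel-Jacobi map through Deligne cohomology via the refined cycle class $\CH^p(X;\Q)\to H^{2p}_\cD(X,\Q(p))$, which, on homologically trivial cycles, factors through $IJ^p(X)\otimes\Q$ by virtue of the exact sequence \eqref{Deligne}. Flat pullback, proper pushforward, and cup product with an algebraic class all exist on Deligne cohomology and are compatible with the corresponding Chow-theoretic operations under this refined cycle class map (see \cite{Es-Vi}). Applying these compatibilities to the smooth projections $p_1, p_2$ from $C\times\cSU_C(r,\cO(x))$ and to the algebraic classes $c_2(U)$ and $H^{n-3}$ yields a commutative diagram of Deligne cohomology groups which, upon taking the quotient by $F^p+H^{2p-1}(\,\cdot\,,\Z)$, specializes to the commutative square asserted in the lemma.

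There is no genuine obstacle: the argument is essentially a diagram chase leveraging the standard functoriality of $AJ$ and the Deligne-cohomology formalism. The only delicate bookkeeping is (i) checking at each intermediate stage that the cycle remains homologous to zero, so that the Abel-Jacobi map is defined and takes values in the intermediate Jacobian of the expected codimension (this dictates why the Lefschetz operator $\cup H^{n-3}$ is postponed to the last step rather than inserted earlier), and (ii) recording that the identification $Jac(C)\otimes\Q\simeq IJ_1\otimes\Q$ in Corollary \ref{isoIJ} is literally the cohomological realization of the same correspondence whose Chow-group lift defines $\psi$, so no further comparison isomorphism needs to be introduced. Once these two points are set up, the four small commutative squares assemble into the required diagram.
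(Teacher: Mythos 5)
Your proposal is correct and follows essentially the same route as the paper: both arguments lift the Abel--Jacobi maps to the Deligne cohomology cycle class map, invoke its compatibility with pullback, pushforward, and intersection with algebraic classes (i.e., with correspondences and intersection products, per \cite{Es-Vi}), and use the exact sequence \eqref{Deligne} to descend to the intermediate Jacobians, identifying the bottom row with Corollary \ref{isoIJ}. Your version merely spells out the four elementary commutative squares that the paper compresses into the single phrase ``compatible with correspondences and intersection products.''
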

\begin{proof}
There are cycle class maps
$$
\CH_1(\cSU_C(r,\cO(x)))\rar H^{2n-2}_\cD(\cSU_C(r,\cO(x)),\Z(n-1))
$$
$$
\CH^1(C) \rar H^2_\cD(C,\Z(1))
$$
which induce the Abel -Jacobi map on the subgroups of cycles which are homologous to zero (see \cite{Es-Vi}). In other words, using the exact sequence \eqref{Deligne}, we note that the Deligne cycle class map on the subgroup of cycles homologous to zero factors via
the intermediate Jacobian and this map is the same as the Abel-Jacobi map.  Furthermore, the cycle class map into the Deligne cohomology is compatible with correspondences and intersection products on the Chow groups.
This implies that the above diagram in the statement of the lemma is commutative. The isomorphism on the last row of the commutative diagram is given by Corollary \ref{isoIJ}.
\end{proof}

\begin{corollary}\label{AJsurj}
The Abel-Jacobi map
$$
AJ_1:  \CH_1(\cSU_C(r,\cO(x));\Q)_{\hom} \lrar IJ_1\otimes \Q
$$
is surjective and is a splitting, i.e., the inverse $AJ_1^{-1}$ is well-defined and is injective.
\end{corollary}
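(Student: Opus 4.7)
The plan is a direct diagram chase on the commutative square of Lemma \ref{commute}. The bottom horizontal arrow $Jac(C)\otimes\Q \xrightarrow{\simeq} IJ_1\otimes\Q$ is an isomorphism by Corollary \ref{isoIJ}, and the left vertical arrow $AJ_C\colon \CH^1(C;\Q)_{\hom}\to Jac(C)\otimes\Q$ is an isomorphism by the classical Abel--Jacobi theorem for the curve $C$ (tensored with $\Q$). Consequently the composition $AJ_1\circ\psi$ coincides with the composition of two isomorphisms, hence is surjective. Since $AJ_1\circ\psi$ factors through $AJ_1$, surjectivity of $AJ_1$ follows immediately.

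For the splitting, denote by $\phi\colon Jac(C)\otimes\Q \xrightarrow{\simeq} IJ_1\otimes\Q$ the bottom isomorphism and define
\[
s \;:=\; \psi\circ AJ_C^{-1}\circ \phi^{-1}\colon IJ_1\otimes\Q \lrar \CH_1(\cSU_C(r,\cO(x));\Q)_{\hom}.
\]
Then by commutativity of the square,
\[
AJ_1\circ s \;=\; AJ_1\circ \psi\circ AJ_C^{-1}\circ \phi^{-1} \;=\; \phi\circ AJ_C\circ AJ_C^{-1}\circ \phi^{-1} \;=\; \text{id}_{IJ_1\otimes\Q}.
\]
Thus $s$ is a section of $AJ_1$, which simultaneously exhibits the splitting and shows that $AJ_1^{-1}:=s$ is a well-defined injective right inverse.

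There is essentially no obstacle here: every nontrivial ingredient (the Hard Lefschetz isomorphism, the identification $\Gamma_{c_2(U)}$ on $H^1(C)$, compatibility of cycle class maps with correspondences and intersection products) has already been assembled in Theorem \ref{Narasimhan}, Corollary \ref{isoIJ}, and Lemma \ref{commute}. The only step that could conceivably be flagged is the passage from the classical statement of Abel's theorem for $C$ (which is an isomorphism onto $Jac(C)$ with integral coefficients on degree-zero divisors) to the $\Q$-coefficient version used above, but this is immediate after tensoring with $\Q$.
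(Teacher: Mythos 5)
Your proof is correct and follows essentially the same route as the paper, whose proof is the one-line instruction to use the isomorphism in the bottom row of the commutative diagram of Lemma \ref{commute}; you have simply spelled out the diagram chase and written down the section $s=\psi\circ AJ_C^{-1}\circ\phi^{-1}$ explicitly. No issues.
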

\begin{proof}
Use the isomorphism in the last row of the commutative diagram of Lemma \ref{commute} to obtain the surjectivity of $AJ_1$ and a splitting.
\end{proof}

We know by Corollary \ref{AJsurj} that the Abel-Jacobi map $AJ_1$ is surjective and a splitting. To show that the Abel-Jacobi map is actually an isomorphism, it suffices to show that the the one-cycles on the moduli space are generated by cycles parametrised by the Jacobian $Jac(C)$.  In other words there should be a surjective map
\begin{equation}\label{assumption}
Jac(C)\otimes \Q \rar \CH_1(\cSU_C(r,\cO(x)))_{\hom}\otimes \Q.
\end{equation}

We will see in \S \ref{last} that the assumption \eqref{assumption} is fulfilled for $\cSU_C(r,\cO_C)$, whenever $r\geq 2$ and $g\geq 4$. This will help us to conclude the desired Abel-Jacobi isomorphism.

Since the moduli space of stable bundles $\cSU_C^s(r,\cO_C)\subset  \cSU_C(r,\cO_C)$ is an open subvariety, we need to first explain how the Abel-Jacobi map
is defined for non-compact varieties. In our situation,it requires us to introduce the relative Chow group. We will see that Abel-Jacobi maps can be defined on the cycles homologous to zero in these groups. 
Before studying the Abel-Jacobi maps, we first try to compute the rational Chow groups and
cohomology groups in small codimensions and obtain a Hard Lefschetz Theorem. These results
will enable us to obtain appropriate target groups for the Abel-Jacobi maps.
 
%%%%%%%%%%%%%%%%%%%%%%%%%%%%%%%%%%%%%%%%%%%%%%%%%%%%%%%%%%%%%%%%%%%%%%%%%%%%%%%%%%%%%%%%%%%%%%%%%%%%%%%%%%%%%
%%%%%%%%%%%%%%%%%%%%%%%%%%%%%%%%%%%%%%%%%%%%%%%%%%%%%%%%%%%%%%%%%%%%%%%%%%%%%%%%%%%%%%%%%%%%%%%%%%%%%%%%%%%%%%%%%%%%%%%%%%%%
\section{Hard Lefschetz isomorphism for the bottom weight odd degree rational cohomology of $\cSU_C^s(2,\cO_C)$}\label{HL}
%%%%%%%%%%%%%%%%%%%%%%%%%%%%%%%%%%%%%%%%%%%%%%%%%%%%%%%%%%%%%%%%%%%%%%%%%%%%%%%%%%%%%%%%%%%%%%%%%%%%%%%%%%%%%%%%%%%%%%%%%%%%%

In this section, we investigate the Chow/cohomology properties of the moduli space $\cSU_C(2,\cO_C)$.
The main result is a proof of the Hard Lefschetz isomorphism for the bottom weight odd degree rational cohomology of the moduli space $\cSU_C(2,\cO_C)$. Along the way, we also obtain an isomorphism of the codimension two Chow group of $\cSU_C^s(2,\cO_C)$ with the Jacobian $Jac(C)$, with $\Q$-coefficients.
The Lefschetz operator acts compatibly on the Chow groups and on the cohomology groups. This enables us to give a proof of the surjectivity of the Abel-Jacobi map on one cycles, when  $g\geq 3$.
In the next section, we investigate the Abel-Jacobi map surjectivity for the higher rank $r\geq 2$ case, under the assumption $g\geq 4$.

The main tool is the \textit{Hecke correspondence} which relates the Chow groups and the cohomology groups of the moduli space $\cSU_C(2,\cO(-x))$ with those of the moduli space  $\cSU_C(2,\cO_C)$. 
We recall the Hecke correspondence used by Arapura-Sastry \cite{Ar-Sa} to study the cohomology of the moduli space $\cSU_C^s(r, \cO_C)$ for any $r\geq 2$, since this correspondence will also be used in the next section.

As in the previous section, fix a point $x\in C$. Consider the moduli space $\cSU_C(r,\cO(-x))$ which is a smooth projective variety.
Then there exists a Poincar\'e bundle $\cP\lrar C\times \cSU_C(r,\cO(-x))$. Then $\cP$ restricted to $C\times [E]$ corresponds to a stable bundle $E$ on $C$ and $\cP$ is unique upto pullback of line bundles from $\cSU_C(r,\cO(-x))$. Let $\cP_x$ denote the restriction of $\cP$ to
$\{x\}\times \cSU_C(r,\cO(-x))$. It is known that $\cP_x$ and $\cP$ are stable bundles with respect to any polarization on $\cSU_C(r,\cO(-x))$ and
$C\times \cSU_C(r,\cO(-x))$, respectively (see \cite{Na-Ra}).

Denote the projectivisation $\p:=\p(\cP_x)$ and
 $\pi:\p\lrar \cSU_C(r,\cO(-x))$ be the projection. 

There is a \textit{Hecke diagram} relating the moduli spaces $\cSU_C(r,\cO(-x))$ and $\cSU_C(r,\cO_C)$:
\begin{eqnarray*}
\p\,\,\,\,\,\,\,\,\, & \sta{f}{\lrar} & \cSU_C(r,\cO_C) \\
\downarrow \pi \,\,\,\,&     & \\
\cSU_C(r,\cO(-x)).
\end{eqnarray*}
It is shown in \cite[\S 5]{Ar-Sa} that the space $\p$ is a fine moduli space of quasi-parabolic bundles and parametrises quasi-parabolic structures $V\lrar \cO_Z$ whose kernel is semi-stable. 
Furthermore, there is an open set $U:=f^{-1}\cSU_C^s(r,\cO_C) \subset \p$ such that $f^{-1}V\simeq \p(V_x^*)$ for $V\in \cSU_C^s(r,\cO_C)$. In particular $f$ restricts to a projection
\begin{equation}
f_U:U\lrar \cSU_C^s(r,\cO_C)
\end{equation}
which is a $\p^{r-1}$-bundle, see \cite[Remark 5.0.2]{Ar-Sa}.
The diagram
\begin{equation}\label{excision}
\cSU_C(r,\cO(-x))\sta{\pi}{\leftarrow} \p \supset U\sta{f_U}{\rightarrow} \cSU_C^s(r,\cO_C)
\end{equation}
together with Hodge theory, projective bundle formulas and codimension estimates enabled Arapura-Sastry to compare the cohomologies of the two moduli spaces in \eqref{excision} at least in low degrees.
For our purpose, we recall the codimension estimate \cite[p.17]{Ar-Sa};
\begin{equation}\label{codim}
\m{codim}(\p-U)\geq 3
\end{equation}
whenever $g\geq 3$. 
Altogether, the cohomology $H^3(\cSU_C^s(r,\cO_C),\Q)$ has a pure Hodge structure of weight $3$ and there is an isomorphism of pure Hodge structures  \cite[Theorem 8.3.1]{Ar-Sa}:
\begin{equation}\label{HS}
H^3(\cSU_C(r,\cO(-x)),\Q) \simeq H^3(\cSU_C^s(r,\cO_C),\Q).
\end{equation}

Together with the isomorphism in Theorem \ref{Narasimhan}, there is an isomorphism of Hodge structures:
\begin{equation}\label{ar-sa}
H^1(C,\Q)(-1)\simeq H^3(\cSU_C^s(r,\cO_C),\Q).
\end{equation}

We would like to extend this isomorphism to the cohomology $H^{2n-3}(\cSU_C^s(r,\cO_C)
,\Q)$ as an isomorphism of Hodge structures, via the Lefschetz operator. Since $\cSU_C^s(r,\cO_C)$ is a non-compact smooth variety, the Hard Lefschetz theorem is not immediate. In general, this theorem does not hold for non-compact smooth varieties. Hence, we investigate the action of the Lefschetz operator on suitable sub-structures of the rational cohomology group, where the isomorphism may hold. 

For this purpose, we look at the resolution $\cS$ of $\cSU_C(r,\cO_C)$ constructed by Seshadri \cite{Se2} together with an understanding of the exceptional loci of the resolution
\begin{equation}
g:\cS\lrar \cSU_C(r,\cO_C).
\end{equation}
This map restricts to an isomorphism $g^{-1}\cSU_C^s(r,\cO_C)\simeq \cSU_C^s(r,\cO_C)$.
The variety $\cS$ is constructed as a moduli space of semi-stable vector bundles of rank $r^2$ and trivial determinant whose endomorphism algebra is a specialisation of the matrix algebra. In general, the moduli space $\cS$ is a normal projective variety and it is proved to be a smooth variety only when the rank $r=2$. Hence we assume $r=2$ in the further discussion. See also other resolutions by Narasimhan-Ramanan \cite{Na-Ra2} and Kirwan \cite{Ki}.

%%%%%%%%%%%%%%%%%%%%%%%%%%%%%%%%%%%%%%%%%%%%%%%%%%%%%%%%%%%%%%%%%%%%%%%%%%%%%%%%%%%%%%%%%%%%%%%%%%
\subsection{Stratification of $\cS$ when $r=2$}\label{Seshadri's}

We will recall the description of the exceptional loci of Seshadri's desingularisation given in \cite[\S 3]{Ba}, \cite{Ba-Se}.
Recall that the singular locus of $\cSU_C(2,\cO_C)$ is parametrised by semi-stable bundles of the type $L\oplus L^{-1}$ for
$L\in Jac(C)$. The inverse map $i$ on $Jac(C)$ is given by $L\mapsto L^{-1}$. In other words, the Kummer variety $K(C):= \frac{Jac(C)}{<i>}$ is precisely the singular locus. Denote the image of the set of $2^{2g}$ fixed points by $K(C)_{fix}\subset K(C)$.
There is a stratification
\begin{equation}
\cSU_C(2,\cO_C)= \cSU_C^s(2,\cO_C) \sqcup (K(C)-K(C)_{fix})\sqcup K(C)_{fix}.
\end{equation}

The desingularisation $\cS$ is stratified by the rank of a natural conic bundle on $\cS$ \cite[\S 3]{Ba} and there is a filtration by closed subvarieties
\begin{equation}
\cS=\cS_0\supset \cS_1\supset \cS_2\supset \cS_3
\end{equation}
such that $\cS-\cS_1= g^{-1}(\cSU_C^s(2,\cO_C))$ and $\cS_{i+1}$ is the singular locus of $\cS_i$.

The strata are described by the following:
\begin{proposition}\label{BalajiSeshadri}
1) The image $g(\cS_1-\cS_2)$ is precisely the middle stratum. In fact $\cS_1-\cS_2$ is a
$\p^{g-2}\times \p^{g-2}$ bundle over $K(C)-K(C)_{fix}$.

2) The image of $\cS_2$ is precisely the deepest stratum $K(C)_{fix}$ and $\cS_2-\cS_3$ is the disjoint union of $2^{2g}$ copies of a vector bundle of rank $g-2$ over the Grassmanian $G(2,g)$. The stratum $\cS_3$ is the disjoint union of $2^{2g}$ copies of the Grassmanian $G(3,g)$.
\end{proposition}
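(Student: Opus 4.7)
\textit{Proof plan.} The strategy is to compute the fibers of Seshadri's desingularisation $g \colon \cS \to \cSU_C(2, \cO_C)$ over each stratum of the singular locus and identify the resulting bundle structures; this is the content of \cite{Ba} and \cite{Ba-Se}, and the deformation-theoretic outline is as follows. Recall that $\cS$ is built as the moduli space of pairs $(E, \phi)$, where $E$ is a semistable rank-$4$ bundle on $C$ with trivial determinant and $\phi$ realises $\text{End}(E)$ as a specialisation of the matrix algebra $M_2(\cO_C)$. This equips $\cS$ with a natural conic bundle, and the filtration $\cS \supset \cS_1 \supset \cS_2 \supset \cS_3$ is precisely the rank-degeneration filtration of this conic. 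Since $g$ restricts to an isomorphism on the stable locus, the exceptional fibers live over the strictly semistable locus $K(C)$, and are governed by the degeneration type of the conic at that point.

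For assertion (1), pick a polystable bundle $V = L \oplus L^{-1}$ representing a point of $K(C) \setminus K(C)_{fix}$, so that $L^2 \neq \cO_C$. The off-diagonal deformation directions at $V$ are $\Ext^1(L, L^{-1}) = H^1(L^{-2})$ and $\Ext^1(L^{-1}, L) = H^1(L^{2})$, and each has dimension $g-1$ by Riemann-Roch since $L^{\pm 2}$ is a nontrivial degree-zero line bundle. The stabiliser $\CC^* \times \CC^*$ of $V$ acts on the two factors by opposite characters, so the rigidification imposed by the conic bundle structure yields a fiber $\p\bigl(H^1(L^2)\bigr) \times \p\bigl(H^1(L^{-2})\bigr) = \p^{g-2} \times \p^{g-2}$. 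Varying $L$ over $K(C) \setminus K(C)_{fix}$ then globalises this into the claimed product projective-space bundle.

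For assertion (2), pick $L$ with $L^2 = \cO_C$, so $V = L \oplus L$ has enlarged automorphism group $GL_2$. The deformation space is now $\Ext^1(V, V) = M_2(\CC) \otimes H^1(\cO_C) \cong M_2(\CC) \otimes \CC^g$, and the defining equations of the conic bundle impose symmetry and rank conditions on the corresponding matrix-valued class. Taking the GIT quotient by $GL_2$, the rank-$1$ locus becomes a rank-$(g-2)$ affine bundle over the Grassmanian $G(2, g)$ of $2$-planes in $\CC^g$, while the fully degenerate (rank-$0$) locus is the Grassmanian $G(3, g)$; the factor $2^{2g}$ counts the order of the $2$-torsion subgroup of $Jac(C)$, i.e.\ the possible choices of $L$. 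The main obstacle is precisely this last step: the orbit-theoretic analysis of $GL_2$ acting on $M_2(\CC) \otimes \CC^g$ subject to the conic-bundle equations, and the identification of the resulting quotients with the Grassmanians $G(2, g)$ and $G(3, g)$. This is the technical heart of the computation in \cite{Ba, Ba-Se}, carried out by writing down explicit morphisms from the conic bundle strata to the universal Grassmanian bundles and checking that they are isomorphisms fiberwise.
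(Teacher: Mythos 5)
The paper does not actually prove this proposition: its ``proof'' is the single line ``See \cite[section 2]{Ba-Se}.'' Your proposal is therefore more detailed than the source it is being compared against, and the outline you give is consistent with what Balaji and Seshadri actually establish. The local computations you record are correct: for $V=L\oplus L^{-1}$ with $L^2\neq\cO_C$ the off-diagonal deformation spaces $H^1(L^{\pm 2})$ have dimension $g-1$ by Riemann--Roch, giving the $\p^{g-2}\times\p^{g-2}$ fibre, and for $L^2=\cO_C$ the deformation space $\Ext^1(V,V)\cong M_2(\CC)\otimes\CC^g$ with its $GL_2$-action is the right starting point for the deepest stratum. You are also candid that the genuinely hard step --- the orbit/rank analysis of the conic bundle over the $2^{2g}$ fixed points and the identification of the quotients with $G(2,g)$ and $G(3,g)$ --- is deferred to \cite{Ba} and \cite{Ba-Se}, which is exactly what the paper does wholesale. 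Two minor points of care: the statement asserts $\cS_2-\cS_3$ is a \emph{vector} bundle of rank $g-2$ over $G(2,g)$, whereas you write ``affine bundle''; and your heuristic that the stabiliser $\CC^*\times\CC^*$ ``acts by opposite characters'' is a plausibility argument for the projectivisation rather than a derivation --- the actual identification in \cite{Ba-Se} comes from the explicit description of the exceptional divisor of the Kirwan/Seshadri blow-up. Neither point is a gap relative to the paper, which supplies no argument at all.
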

\begin{proof}
See \cite[section 2]{Ba-Se}.
\end{proof}

We now note that the strata $\cS_1,\cS_2$ in the stratification $\cS\supset \cS_1\supset \cS_2 \supset \cS_3$ are singular varieties. Moreover $\cS_2\subset \cS_1$ and $\cS_3\subset \cS_2$ are the singular loci respectively.
Hence for our purpose, we look at the Borel-Moore homology theory (for example see \cite[Chapter V, \S 6]{PetersSteenbrink} for properties and the notion of weights) in the below discussion. In the following the Borel-Moore homology is denoted by $H_i(M,\Q)$. This is the usual homology, when $M$ is a complete variety.

Before proving our main result, we need the projective bundle formula of mixed Hodge structures:

\begin{lemma}\label{BMweights}
Suppose $p:M\rar N$ is a $d$-fold fibre product of $\p^{r-1}$-bundles (which need not be locally trivial in the Zariski topology) over a smooth quasi-projective variety $N$.
Then, we have an equality of the mixed Hodge structures;
$$
W_iH^i(M,\Q)=\oplus_{j\geq 0}W_{i-2j}H^{i-2j}(N,\Q)\otimes H^{2j}((\p^{r-1})^{d},\Q).
$$
This gives the equality of the Borel-Moore homologies of $M$ and $N$;
$$
W_{-i}H_i(M,\Q)=\oplus_{j\geq 0}W_{2j-i}H_{i-2j}(N,\Q)\otimes H_{2j}((\p^{r-1})^{d},\Q).
$$
\end{lemma}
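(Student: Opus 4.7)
The plan is to reduce to the single-factor case by induction on $d$ and then invoke a Leray--Hirsch argument that is compatible with mixed Hodge structures. Write $p$ as a tower $M = M_d \rar M_{d-1}\rar\cdots\rar M_1 \rar M_0 = N$ in which each $M_k\rar M_{k-1}$ is one of the given $\p^{r-1}$-bundles. Since each $M_k$ is smooth and quasi-projective (a bundle of smooth projective varieties over a smooth quasi-projective base), an induction on $d$ reduces the claim to the case $d=1$; the multiplicativity of cohomology of $(\p^{r-1})^d$ under K\"unneth produces the full tensor factor automatically.

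For the base case $p\colon M\rar N$ a single $\p^{r-1}$-bundle, the first task is to produce a relative hyperplane class $h\in H^2(M,\Q)$ whose restriction to each fibre generates the cohomology of $\p^{r-1}$. If $M = \p(E)$ for some vector bundle this is tautological, but the lemma only assumes \'etale local triviality; in general one takes $h = -\tfrac{1}{r}c_1(\omega_{M/N})\in H^2(M,\Q)$, which restricts to $c_1(\cO_{\p^{r-1}}(1))$ on every fibre. With such a class, the classical Leray--Hirsch theorem gives
\[
\bigoplus_{j=0}^{r-1} H^{\ast -2j}(N,\Q)\otimes \Q\cdot h^j \;\sta{\simeq}{\lrar}\; H^{\ast}(M,\Q),\qquad (\alpha,\ldots)\mapsto \sum_j p^{\ast}\alpha_j\cup h^j,
\]
so in particular $H^{i}(M,\Q)\simeq \bigoplus_j H^{i-2j}(N,\Q)\otimes H^{2j}(\p^{r-1},\Q)$. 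Because $p^{\ast}$ is a morphism of mixed Hodge structures and $h$ is an algebraic class of Hodge type $(1,1)$ (and thus pure of weight $2$), cup product with $h^j$ shifts weights by $2j$; hence the isomorphism is an isomorphism of mixed Hodge structures, and restricting to the lowest weight piece on each side gives
\[
W_iH^i(M,\Q)=\bigoplus_{j\geq 0}W_{i-2j}H^{i-2j}(N,\Q)\otimes H^{2j}(\p^{r-1},\Q),
\]
as required. Iterating the construction along the tower $M_d\rar\cdots\rar N$ produces the product $(\p^{r-1})^d$ factor and proves the cohomology statement.

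Finally, to obtain the Borel--Moore version one uses Poincar\'e duality: since $N$ is smooth of some dimension $n$ and $M\rar N$ is smooth of relative dimension $d(r-1)$, $M$ is itself smooth of dimension $n+d(r-1)$, and
\[
H_{i}(M,\Q)\simeq H^{2n+2d(r-1)-i}(M,\Q)\bigl(n+d(r-1)\bigr),
\]
and similarly for $N$ and for $(\p^{r-1})^d$. Substituting these identifications into the cohomological formula already proved, taking account of the weight shift by $-2$ under the Tate twist $(1)$, and isolating the lowest weight part yields the stated formula for $W_{-i}H_i(M,\Q)$. The only non-routine step in the argument is the construction of the rational hyperplane class $h$ in the absence of Zariski-local triviality; once that is in place, everything else is Leray--Hirsch together with the standard compatibility of cup product with mixed Hodge structures.
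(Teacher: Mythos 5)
Your proof is correct, and it is genuinely more self-contained than the paper's. The paper disposes of the cohomological decomposition by citing \cite[Proposition 6.3.1]{Ar-Sa} for the full mixed-Hodge-structure isomorphism $H^i(M,\Q)\simeq\oplus_j H^{i-2j}(N,\Q)\otimes H^{2j}((\p^{r-1})^d,\Q)$ (together with its compactly supported variant), and then extracts the bottom weight piece by invoking purity of $H^{*}((\p^{r-1})^d,\Q)$ and semi-simplicity of the category of $\Q$-mixed Hodge structures; the Borel--Moore statement is then read off from the compactly supported version. You instead prove the decomposition from scratch, and the key point --- which the paper leaves buried in the citation --- is your construction of the rational relative hyperplane class $h=-\tfrac1r c_1(\omega_{M/N})$: this is exactly what is needed to run Leray--Hirsch when the bundle is only \'etale-locally (not Zariski-locally) trivial, and since $h$ is algebraic the Leray--Hirsch map $\oplus_j H^{i-2j}(N,\Q)(-j)\to H^i(M,\Q)$ is a bijective morphism of mixed Hodge structures, hence an isomorphism, so applying $W_i$ gives the claim. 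For the homological statement you dualize against ordinary cohomology via $H_k(M,\Q)\simeq H^{2\dim M-k}(M,\Q)(\dim M)$, which is the appropriate duality for Borel--Moore homology of a smooth variety, whereas the paper routes through $H^{\bullet}_c$; the degree and Tate-twist bookkeeping you indicate does land on the stated formula. What your approach buys is transparency about why the hypothesis ``not necessarily Zariski-locally trivial'' is harmless and why the splitting is one of mixed Hodge structures; what the paper's buys is brevity. The only points worth spelling out explicitly in your write-up are (i) that each stage $M_k\to M_{k-1}$ of the tower is again a $\p^{r-1}$-bundle because it is a base change of one of the given bundles, and (ii) that $h$ lies in $W_2H^2(M,\Q)\cap F^1$ (it is the restriction of a class from a smooth compactification), so that $h^j$ spans a copy of $\Q(-j)$ and cup product with it is a morphism of mixed Hodge structures; both are immediate.
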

\begin{proof}
The proof is basically given in \cite[Proposition 6.3.1]{Ar-Sa}. Their proof is stated in terms of the 'full' cohomology $H^i(N,\Q)$ and $H^i(M,\Q)$, whereas for our purpose we restrict their formula to the bottom weight cohomology of $M$, which gives the formula as stated. Here we use the fact that the product of projective spaces have only pure Hodge structures and that the category of $\Q$-mixed Hodge structures is semi-simple. In fact the same proof also holds for the compactly supported cohomologies $H^i_c(M,\Q)$ and $H^i_c(N,\Q)$ and we can restrict it on the  
weight $i$-piece, to get the formula;
$$
W_iH^i_c(M,\Q)=\oplus_{j\geq 0}W_{i-2j}H^{i-2j}_c(N,\Q)\otimes H^{2j}((\p^{r-1})^{d},\Q).
$$

The second statement follows from the definition (see \cite[Definition-Lemma 6.11]{PetersSteenbrink}) of weights on the Borel-Moore homology of a smooth quasi projective variety $R$ of dimension $d$ using the isomorphism of MHS:
$$
H_k(R,\Q)\simeq H^{2d-k}_c(R,\Q(d)).
$$

\end{proof}

We also recall the following standard facts for the convenience of the reader. The below facts are essentially collected from U. Jannsen's book \cite[\S 6]{Ja} on twisted Poincar\'e duality theories. Perhaps there are other theories which one could also use, but to prove Corollary \ref{oddzero} and 
Lemma \ref{support}, Jannsen's theory seems to suffice.
 
\begin{lemma}\label{mhsfact}
Let $X$ be a complete  variety of pure dimension $n$ and $T\subset X$ be a closed subvariety. 

\noindent i a)  Suppose $X$ is a smooth variety. Then for all $k$ we have a nonsingular pairing of mixed Hodge structures
$$
H^k_T(X) \otimes H^{2n-k}(T) \rar H^{2n}_T(X)\rar \Q(-n).
$$
Here $H^k_T(X)$ denotes the cohomology supported on $T$. 

\noindent i b) There is a long exact sequence
$$
\rar H^k_T(X) \rar H^k(X) \rar H^k(X-T)\rar.
$$
The group $H^k_T(X)$ has only weights $\geq k$ and the image of this group in $H^k(X)$ has weight $k$. In other words, the image is identified with the image of the group $W_kH^k_T(X)$.

\noindent i c) (Poincar\'e duality) There is an isomorphism
$$
H^{2n-k}_T(X)\simeq H_k(T)(-n)
$$ 
of $\Q$-mixed Hodge structures.

ii) Suppose $X$ is a singular and complete variety. Let $T\subset X$ be a closed subvariety. Then there exist dual long exact sequences of cohomologies
$$
\rar H^k_c(X-T) \rar H^k(X) \rar H^k(T)\rar
$$
and
$$
\rar H_k(T) \rar H_k(X) \rar H_k(X-T)\rar.
$$
\end{lemma}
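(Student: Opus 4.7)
The plan is to deduce all four assertions from the six-functor formalism on the bounded derived category $D^b\MHM(\cdot)$ of mixed Hodge modules (Saito), or equivalently from Jannsen's axioms for a twisted Poincar\'e duality theory as cited in the statement. Specifically, I would derive everything from the standard sheaf-theoretic distinguished triangles together with Verdier duality; this automatically equips each map with an MHS structure and bookkeeps the Tate twists, leaving only the geometric identifications to verify.

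First I would set up the dictionary. For the closed embedding $i: T\hookrightarrow X$ with complementary open $j: X-T\hookrightarrow X$, I use the standard identifications $H^k_T(X) = \mathbb{H}^k(X, i_*i^!\Q_X^H)$ and $H_k(Z) = \mathbb{H}^{-k}(Z,\omega_Z^H)$ as MHS. The long exact sequence in i b) is then the hypercohomology of the distinguished triangle $i_*i^!\Q_X^H \to \Q_X^H \to Rj_*\Q_{X-T}^H \xrightarrow{+1}$. When $X$ is complete and possibly singular, the dual pair of triangles $j_!\Q_{X-T}^H \to \Q_X^H \to i_*\Q_T^H\xrightarrow{+1}$ and $i_*\omega_T^H \to \omega_X^H \to Rj_*\omega_{X-T}^H \xrightarrow{+1}$ produce respectively the cohomology and Borel--Moore homology long exact sequences in ii), since completeness makes $H^k(X)$ coincide with $H^k_c(X)$.

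For the Poincar\'e duality i c), I would exploit the canonical isomorphism $\omega_X^H \cong \Q_X^H[2n](n)$ in $D^b\MHM(X)$, which holds whenever $X$ is smooth of dimension $n$. Applying $i^!$ gives $\omega_T^H = i^!\omega_X^H \cong i^!\Q_X^H[2n](n)$, and taking hypercohomology in the appropriate degree yields the MHS isomorphism $H^{2n-k}_T(X)\cong H_k(T)(-n)$. Plugging this back into the long exact sequence of i b) proves the weight claim: Borel--Moore homology $H_j(T,\Q)$ of any complex algebraic variety carries only weights $\geq -j$ (proved by stratification, reducing to the smooth case where it follows from duality with $H^{2\dim T - j}$), so after the twist by $(-n)$ the group $H^k_T(X)$ has only weights $\geq k$. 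Since $H^k(X)$ is pure of weight $k$ for $X$ smooth and complete, the image of $H^k_T(X)\to H^k(X)$ is forced to be pure of weight $k$, hence equal to the image of the piece $W_kH^k_T(X)$.

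Finally, for the pairing in i a), I would realize it as the hypercohomology in degree $2n$ of the adjunction pairing $i_*i^!\Q_X^H \otimes^L i_*\Q_T^H \to i_*i^!\Q_X^H$, composed with the trace $\mathbb{H}^{2n}(X,i_*i^!\Q_X^H) \to \mathbb{H}^{2n}(X,\Q_X^H) \to \Q(-n)$ coming from the counit $i_*i^!\to \mathrm{id}$ and the fundamental class of $X$. Its non-degeneracy is precisely Verdier duality between $i^!\Q_X^H$ and $i^*\Q_X^H = \Q_T^H$ on $T$. The main obstacle throughout is bookkeeping the Tate twists and verifying that this abstract adjunction pairing really matches the classical cup-product-and-trace formula implicit in the statement of i a); this is exactly the content of Jannsen's Chapter 6, so in practice I would invoke his axiomatization after noting that $D^b\MHM$ satisfies the required axioms rather than redo the verification by hand.
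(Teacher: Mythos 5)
Your proposal is correct and follows essentially the same route as the paper, which simply cites Peters--Steenbrink (for i a), i b)) and Jannsen's axiomatics of twisted Poincar\'e duality theories (for i c), ii)) rather than re-deriving these standard facts. Your unwinding via the distinguished triangles, $\omega_X \cong \Q_X[2n](n)$ for smooth $X$, Verdier duality, and the weight bound $W_{<-j}H_j^{BM}=0$ is precisely the machinery those references encode, so there is nothing to add.
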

\begin{proof}
i a) See the discussion above \cite[Corollary 6.14]{PetersSteenbrink}.
The second statement i b) follows from  \cite[Corollary 6.14]{PetersSteenbrink} and noting that $H^k(X)$ has a pure Hodge structure of weight $k$.
The third statement i c)follows from \cite[p.82, h) and p.92, Example 6.9]{Ja}.
The statement ii) follows from \cite[p.81,\S 6, f)]{Ja}.
\end{proof}

\begin{corollary}\label{oddzero}
The weight $(-i)$ piece  $W_{-i}H_i(\cS_1,\Q )$ of the homology $H_i(\cS_1,\Q )$ of $\cS_1$ is zero, whenever $i$ is odd.
\end{corollary}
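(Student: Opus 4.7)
The plan is to exploit the stratification $\cS = \cS_0 \supset \cS_1 \supset \cS_2 \supset \cS_3$ of Proposition \ref{BalajiSeshadri} together with the localization sequences of Lemma \ref{mhsfact}(ii), working from the deepest stratum outwards and taking the weight $-i$ piece only at the very end.

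First I would show the stronger statement that $H_k(\cS_2, \Q) = 0$ for every odd $k$. The stratum $\cS_3$ is a disjoint union of $2^{2g}$ copies of the Grassmannian $G(3, g)$, which admits an algebraic cell decomposition with only even-dimensional cells, so $H_k(\cS_3, \Q) = 0$ for $k$ odd. The stratum $\cS_2 - \cS_3$ is a disjoint union of rank $(g-2)$ algebraic vector bundles over $G(2, g)$, and the Thom isomorphism for Borel--Moore homology yields $H_k(\cS_2 - \cS_3, \Q) \cong H_{k - 2(g-2)}(G(2, g), \Q)^{\oplus 2^{2g}}$, which again vanishes for $k$ odd. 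The localization sequence of Lemma \ref{mhsfact}(ii) for $\cS_3 \subset \cS_2$ then forces $H_k(\cS_2, \Q) = 0$ for $k$ odd.

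Next I would treat the remaining stratum $\cS_1 - \cS_2$, which by Proposition \ref{BalajiSeshadri} is a $\p^{g-2} \times \p^{g-2}$-bundle over $N := K(C) - K(C)_{fix}$. Applying Lemma \ref{BMweights} with $d = 2$ and $r - 1 = g - 2$ gives
$$W_{-k}H_k(\cS_1 - \cS_2, \Q) = \bigoplus_{j \geq 0} W_{-(k-2j)} H_{k-2j}(N, \Q) \otimes H_{2j}((\p^{g-2})^2, \Q),$$
so for $k$ odd it suffices to prove $W_{-m}H_m(N, \Q) = 0$ for every odd $m$. Writing $N = (Jac(C) - T)/\langle i \rangle$, where $T$ is the set of $2^{2g}$ two-torsion points and $i$ is the inversion involution (which acts freely on $Jac(C) - T$), one has $H_*(N, \Q) = H_*(Jac(C) - T, \Q)^{\langle i \rangle}$ as mixed Hodge structures. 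The localization sequence of Lemma \ref{mhsfact}(ii) for $T \subset Jac(C)$, combined with the vanishing $H_k(T) = 0$ for $k > 0$, shows $H_m(Jac(C) - T, \Q) \cong H_m(Jac(C), \Q) = \wedge^m H_1(Jac(C), \Q)$ (pure of weight $-m$) for $m \geq 2$, and an extension whose weight $-1$ part equals $H_1(Jac(C), \Q)$ for $m = 1$. Since inversion acts as $-1$ on $H_1(Jac(C), \Q)$, it acts as $(-1)^m$ on $W_{-m}H_m(Jac(C) - T, \Q)$, so the $\langle i \rangle$-invariants vanish for $m$ odd.

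Combining the two steps, the localization sequence of Lemma \ref{mhsfact}(ii) for $\cS_2 \subset \cS_1$ gives a sequence of mixed Hodge structures
$$0 = H_k(\cS_2, \Q) \rar H_k(\cS_1, \Q) \rar H_k(\cS_1 - \cS_2, \Q)$$
for $k$ odd, and applying the exact functor $W_{-k}$ yields $W_{-k}H_k(\cS_1, \Q) \hookrightarrow W_{-k}H_k(\cS_1 - \cS_2, \Q) = 0$. I expect the delicate point to be the Kummer computation, where one must carefully separate the weight $0$ contributions of the two-torsion points $T$ from the pure weight $-1$ part $H_1(Jac(C))$ before taking $\langle i \rangle$-invariants; the remainder is bookkeeping through the standard long exact sequences and the projective bundle formula.
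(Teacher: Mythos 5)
Your proposal is correct and follows essentially the same route as the paper: work outward through the stratification $\cS_3\subset\cS_2\subset\cS_1$ with the localization sequences of Lemma \ref{mhsfact}(ii), kill all odd Borel--Moore homology of $\cS_2$ via the Grassmannian description, apply Lemma \ref{BMweights} to the $\p^{g-2}\times\p^{g-2}$-bundle, and reduce to the vanishing of the bottom-weight odd homology of $K(C)-K(C)_{fix}$, which ultimately rests on the inversion acting by $(-1)^m$ on $H_m(Jac(C),\Q)$. The only (harmless) difference is that you excise the two-torsion first and then take $\langle i\rangle$-invariants upstairs on $Jac(C)-T$, whereas the paper first passes to the quotient $K(C)$ (whose odd homology vanishes) and then excises the image of the fixed points.
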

\begin{proof}
We firstly note that the involution $i$ acts as $(-1)$ on the odd homology (respectively $(+1)$ on the even homology) of the Jacobian $J(C)$. The $i$-fixed homology classes precisely correspond to the homology classes on the quotient variety $K(C)$. This means that the homology of the Kummer variety $K(C)$ is zero in odd degrees. Look at the long exact Borel--Moore homology sequence for the triple $(K(C), K(C)-K(C)_{fix}, K(C)_{fix})$ (see Lemma \ref{mhsfact} ii)):
$$
 \rar H_i(K(C)_{fix},\Q)\rar H_i(K(C),\Q)\rar H_i(K(C)-K(C)_{fix},\Q)\rar.
$$
Since $K(C)_{fix}$ is a finite set of points, we conclude that the homology of the open variety $K(C)-K(C)_{fix}$ is also zero in odd degrees, except in degree $i=1$. In this case, we obtain an injectivity $ H_1(K(C)-K(C)_{fix},\Q)\hookrightarrow H_0(K(C)_{fix},\Q)$.
Since $H_0(K(C)_{fix},\Q)$ has weight $0$, we conclude that $H_1(K(C)-K(C)_{fix},\Q)$ also has only weight $0$.
In particular, we have the vanishing $W_{-1}H_1(K(C)-K(C)_{fix},\Q)=0$.
 
Now we look at the homology of the triple $(\cS_2, \cS_2-\cS_3,\cS_3)$:
the long exact Borel--Moore homology sequence for this triple is
$$
 \rar H_i(\cS_3,\Q)\rar H_i(\cS_2,\Q)\rar H_i(\cS_2-\cS_3,\Q)\rar.
$$

Note that the Grassmanian and vector bundles over Grassmanians have only algebraic homology, i.e., have homology only in even degrees.
By  Proposition \ref{BalajiSeshadri} 2), we know that $\cS_2-\cS_3$ and $\cS_3$ are made of such objects. Hence  we conclude that $\cS_2$ has all of its odd degree cohomology  zero.

Now look at the long exact homology sequence for the triple $(\cS_1, \cS_1-\cS_2, \cS_2)$:
here we note that the homology of the projective bundles or more generally flag varieties over a variety are generated by the homology of the base variety and the standard homology classes of powers of $\cO(1)$ on these bundles. In particular these standard classes contribute only in the even homology of the total space. Since we have noticed above that $K(C)-K(C)_{fix}$ has vanishing odd degree homology in degrees $>1$, and in degree one case  we have $W_{-1}H_1( K(C)-K(C)_{fix},\Q)=0$, it follows that $\cS_1-\cS_2$ has  in odd degree $i$,  the vanishing of the bottom weight cohomology $W_{-i}H_i(\cS_1-\cS_2,\Q)=0$, by Lemma \ref{BMweights}. We use the long exact homology sequence for the triple $(\cS_1, \cS_1-\cS_2, \cS_2)$ and since $\cS_2$ has vanishing odd degree homology we conclude, for an odd degree $i$, the vanishing of the bottom weight cohomology $W_{-i}H_i(\cS_1,\Q)=0$.
\end{proof}

%%%%%%%%%%%%%%%%%%%%%%%%%%%%%%%%%%%%%%%%%%%%%%%%%%%%%%%%%%%%%%%%%%%%%%%%%%%%%%%%%%%%%%%%%
\subsection{Hard Lefschetz isomorphism for $W_{odd}H^{odd}(\cSU_C^s(2,\cO_C),\Q)$}

Fix an ample class $\cL$ on the moduli space $\cS$ which restricts to an ample class $\cL_s$ on $\cSU_C^s(2,\cO_C)$. Let $n:=\m{dim }\cS$.
Now we look at the long exact cohomology sequence for the triple $(\cS,\cSU_C^s(2,\cO_C), \cS_1)$ which is compatible with the Lefschetz operators:
\begin{eqnarray*}
\rar H^{n-i}_{\cS_1}(\cS,\Q) &\sta{g}{ \rar} H^{n-i}(\cS,\Q)\sta{h}{\rar} & H^{n-i}(\cSU_C^s(r,\cO_C),\Q)\rar \\
                         & \downarrow \cup \cL^i & \downarrow \cup \cL^i_s  \\
 \rar H^{n+i}_{\cS_1}(\cS,\Q)& \sta{g'}{\rar} H^{n+i}(\cS,\Q)\sta{h'}{\rar} & H^{n+i}(\cSU_C^s(r,\cO_C),\Q)\rar.
\end{eqnarray*}
Here $H^{n-i}_{\cS_1}(\cS,\Q)$ and $H^{n+i}_{\cS_1}(\cS,\Q)$ denotes the cohomology supported on $\cS_1$.

\begin{lemma}\label{support}
The image of the cohomology $H^{n-i}_{\cS_1}(\cS,\Q)$ (respectively $H^{n+i}_{\cS_1}(\cS,\Q)$)  under $g$ (respectively $g'$) is zero, whenever $n-i$ is odd. In particular the maps $h,h'$ in the above long exact sequences are injective whenever $n-i$ is odd. 
\end{lemma}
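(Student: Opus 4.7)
The plan is to reduce the assertion to the odd-degree bottom-weight Borel--Moore homology vanishing already established in Corollary \ref{oddzero}. The reduction combines the weight bound on cohomology with supports with Poincar\'e duality, so both inputs come directly from Lemma \ref{mhsfact}.

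First I would apply Lemma \ref{mhsfact} i b). Since $r=2$, the variety $\cS$ is smooth and projective of dimension $n$, so $H^k(\cS,\Q)$ is pure of weight $k$, while $H^k_{\cS_1}(\cS,\Q)$ carries only weights $\geq k$. Consequently the image of $g$ (respectively $g'$) inside $H^{n-i}(\cS,\Q)$ (respectively $H^{n+i}(\cS,\Q)$) coincides with the image of the bottom-weight piece $W_{n-i}H^{n-i}_{\cS_1}(\cS,\Q)$ (respectively $W_{n+i}H^{n+i}_{\cS_1}(\cS,\Q)$). So it is enough to show that these two weight pieces vanish when $n-i$ is odd.

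Next I would use Poincar\'e duality (Lemma \ref{mhsfact} i c)) to rewrite the supported cohomology as shifted Borel--Moore homology,
$$
H^{n-i}_{\cS_1}(\cS,\Q)\simeq H_{n+i}(\cS_1,\Q)(-n),\qquad H^{n+i}_{\cS_1}(\cS,\Q)\simeq H_{n-i}(\cS_1,\Q)(-n),
$$
as $\Q$-mixed Hodge structures. Because the Tate twist $(-n)$ shifts weights by $+2n$, the piece $W_{n-i}$ on the left corresponds to $W_{-(n+i)}$ on the right, and $W_{n+i}$ to $W_{-(n-i)}$. When $n-i$ is odd, the integer $n+i$ has the same parity and is also odd, so Corollary \ref{oddzero} simultaneously yields
$$
W_{-(n+i)}H_{n+i}(\cS_1,\Q)=0=W_{-(n-i)}H_{n-i}(\cS_1,\Q).
$$
Combined with the first step, this forces the images of $g$ and $g'$ to vanish, and then exactness of the long cohomology sequence of the pair $(\cS,\cS_1)$ gives the asserted injectivity of $h$ and $h'$.

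The only subtle point, and the main (mild) obstacle, is the bookkeeping of the weight shift through the Tate twist in Poincar\'e duality, together with the observation that the single hypothesis ``$n-i$ odd'' simultaneously controls both dual degrees $n\pm i$. Everything else is a formal consequence of the long exact sequence for supports and the two previously established lemmas, so no further geometric input beyond Corollary \ref{oddzero} is required.
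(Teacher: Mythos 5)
Your argument is correct and follows essentially the same route as the paper's own proof: both reduce the statement to the vanishing of the odd-degree bottom-weight Borel--Moore homology of $\cS_1$ (Corollary \ref{oddzero}) via the weight bound of Lemma \ref{mhsfact} i b) and the duality $H^{n\mp i}_{\cS_1}(\cS,\Q)\simeq H_{n\pm i}(\cS_1,\Q)(-n)$ of Lemma \ref{mhsfact} i c), with the same bookkeeping of the weight shift by the Tate twist. No discrepancies to report.
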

\begin{proof}
Since $\cS_1\subset \cS$ is a divisor, we can identify the group $H^{n-i}_{\cS_1}(\cS,\Q)$ with the Borel--Moore homology $H_{n+i}(\cS_1,\Q(n))$, which is actually the group $H_{n+i}(\cS_1,\Q)(-n)$, see  Lemma \ref{mhsfact} i c).  This group has weights $2(n)-(n+i) = n-i$ and higher, and the image of $H_{n+i}(\cS_1,\Q(n))$ coincides with the image of
$W_{n-i}(H_{n+i}(\cS_1,\Q)(-n))$, see Lemma \ref{mhsfact} i b).

Now we notice that since $\Q(-n)$ has weight $(2n)$, the group $W_{n-i}(H_{n+i}(\cS_1,\Q)(-n))$ is identified with the group $(W_{-n-i}H_{n+i}(\cS_1,\Q))(-n)$.
Since $n-i,n+i$ are odd, by Corollary \ref{oddzero}, we know that  $W_{-n-i}H_{n+i}(\cS_1,\Q)=0$. Hence we conclude that the image of the cohomology $H^{n-i}_{\cS_1}(\cS,\Q)$ under $g$ is zero. A similar computation shows that the group $H^{n+i}_{\cS_1}(\cS,\Q)$ is identified with the group $W_{-n+i}H_{n-i}(\cS_1,\Q)(-n)$ and hence its image under $g'$ is also zero.
\end{proof}

Since the images of $h$ and $h'$ correspond to the bottom weight  cohomology of $\cSU_C^s(2,\cO_C)$ (see \cite[p.39, Corollaire 3.2.17]{Deligne}), we can rewrite the above sequences as the following commutative diagram of exact sequences;
\begin{eqnarray*}
\rar   H^{n-i}_{\cS_1}(\cS,\Q)   & \sta{ g }{\rar} H^{n-i}(\cS,\Q)\sta{h}{\rar} & W_{n-i}H^{n-i}(\cSU_C^s(r,\cO_C),\Q)\rar 0 \\
         \downarrow \cup \cL^{i}_{\cS_1}  & \downarrow \cup \cL^{i} & \downarrow \cup (\cL^{i}_s)_W  \\
 \rar H^{n+i}_{\cS_1}(\cS,\Q) & \sta{ g'}{\rar} H^{n+i}(\cS,\Q)\sta{h'}{\rar} & W_{n+i} H^{n+i}(\cSU_C^s(r,\cO_C),\Q)\rar 0.
\end{eqnarray*}

\begin{lemma}\label{supportcoh}
The map $\cup (\cL^{i}_s)_W$ between the bottom weight cohomologies in the above long exact sequences is an isomorphism, whenever $n-i$ is odd, as a morphism  of Hodge structures. 
\end{lemma}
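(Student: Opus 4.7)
The plan is to deduce the isomorphism by a diagram chase, using Lemma \ref{support} to collapse the supported cohomology terms and Hard Lefschetz on the smooth projective resolution $\cS$ (which is available because $r=2$, so Seshadri's desingularisation is smooth) to handle the middle column.

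First I would record the parity observation: since $n-i$ and $n+i$ have the same parity, the hypothesis that $n-i$ is odd is equivalent to $n+i$ being odd, so Lemma \ref{support} applies simultaneously to both $g$ and $g'$, making both of these maps zero. Once $g=0$ and $g'=0$, the two rewritten long exact sequences degenerate into the short exact sequences
\[
0 \rar H^{n-i}(\cS,\Q)\sta{h}{\rar} W_{n-i}H^{n-i}(\cSU_C^s(2,\cO_C),\Q)\rar 0
\]
and analogously for $n+i$. That is, both $h$ and $h'$ become isomorphisms of mixed (hence pure, since both sides carry a pure Hodge structure of the correct weight) Hodge structures: surjectivity comes from the image of $H^*(\cS,\Q)$ into $H^*(\cSU_C^s(2,\cO_C),\Q)$ being exactly the bottom weight piece by Deligne, and injectivity comes from the vanishing of the kernel $\IM(g) = \IM(g') = 0$.

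Next, since $\cS$ is a smooth projective variety of dimension $n$ and $\cL$ is ample on $\cS$, the classical Hard Lefschetz theorem supplies the isomorphism of pure Hodge structures
\[
\cup \cL^{i}: H^{n-i}(\cS,\Q)\sta{\simeq}{\rar} H^{n+i}(\cS,\Q).
\]
By the commutativity of the diagram preceding the lemma statement, we obtain the factorisation
\[
\cup (\cL^{i}_s)_W \;=\; h' \circ (\cup \cL^{i}) \circ h^{-1},
\]
which exhibits $\cup (\cL^{i}_s)_W$ as a composition of three isomorphisms of pure Hodge structures, hence itself such an isomorphism.

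There is essentially no obstacle left at this point: the genuine work has been done in Lemma \ref{support} (ultimately in Corollary \ref{oddzero}, which provides the vanishing $W_{-n-i}H_{n+i}(\cS_1,\Q)=0$ and $W_{-n+i}H_{n-i}(\cS_1,\Q)=0$ via the stratification of the exceptional locus of Proposition \ref{BalajiSeshadri} and the projective bundle formula in bottom weights of Lemma \ref{BMweights}). Modulo these inputs the argument is a three-line diagram chase, so the proof itself should be a short paragraph verifying that $h$ and $h'$ are iso under the odd-parity hypothesis and then invoking Hard Lefschetz on $\cS$.
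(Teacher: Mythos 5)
Your proposal is correct and follows essentially the same route as the paper: use Lemma \ref{support} to see that $g$ and $g'$ vanish (hence $h$, $h'$ are isomorphisms onto the bottom weight pieces), then transport the classical Hard Lefschetz isomorphism $\cup\cL^i$ on the smooth projective variety $\cS$ through the commutative diagram. Your write-up merely makes explicit the parity remark and the factorisation $\cup(\cL^i_s)_W = h'\circ(\cup\cL^i)\circ h^{-1}$ that the paper leaves implicit.
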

\begin{proof}
Since the line bundle $\cL$ restricts to an ample class on $ \cSU_C^s(2,\cO_C)$, the Lefschetz operator $\cup\cL^{i}$ induces an operator on the cohomology $H^{n-i}_{\cS_1}(\cS,\Q)$. Applying the Hard Lefschetz isomorphism on $\cS$, we have the isomorphism of the operator $\cup\cL^{i}$, as morphisms of Hodge structures.
By Lemma \ref{support}, we deduce that the morphisms $h$ and $h'$ are injective and onto the bottom weight cohomologies.
Since $\cup\cL^{i}$ is an isomorphism we obtain that $\cup(\cL^{i}_s)_W$ is also an isomorphism.
\end{proof}

\begin{lemma}\label{HLs}
The Lefschetz operator 
$$
\cup\cL_s^{n-3}:H^3(\cSU_C^s(2,\cO_C),\Q)\rar H^{2n-3}(\cSU_C^s(2,\cO_C),\Q)
$$
is injective as a morphism of Hodge structures. 
\end{lemma}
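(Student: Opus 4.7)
The plan is to reduce this injectivity to the isomorphism already proved in Lemma \ref{supportcoh}, after identifying $H^3(\cSU_C^s(2,\cO_C),\Q)$ with its bottom weight piece.

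First I would record that $H^3(\cSU_C^s(2,\cO_C),\Q)$ is pure of weight $3$. This is the content of \eqref{HS}: the isomorphism $H^3(\cSU_C(2,\cO(-x)),\Q)\simeq H^3(\cSU_C^s(2,\cO_C),\Q)$ established by Arapura--Sastry (via the Hecke diagram and the codimension estimate \eqref{codim}) is an isomorphism of Hodge structures, and the source is pure of weight $3$ because $\cSU_C(2,\cO(-x))$ is smooth and projective. Consequently,
\[
W_3\,H^3(\cSU_C^s(2,\cO_C),\Q)\;=\;H^3(\cSU_C^s(2,\cO_C),\Q).
\]

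Next I would apply Lemma \ref{supportcoh} with $i:=n-3$. Since $r=2$ and $g\geq 3$, the dimension is $n=3(g-1)\geq 6$, so $n-i=3$ is odd and the hypothesis of Lemma \ref{supportcoh} is satisfied. That lemma therefore yields an isomorphism of Hodge structures
\[
\cup(\cL_s^{n-3})_W:\;W_{3}H^{3}(\cSU_C^s(2,\cO_C),\Q)\;\xrightarrow{\ \simeq\ }\;W_{2n-3}H^{2n-3}(\cSU_C^s(2,\cO_C),\Q).
\]

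Finally, I would combine the two steps. The Lefschetz operator on the full cohomology factors as
\[
H^3(\cSU_C^s(2,\cO_C),\Q)\;=\;W_3H^3\;\xrightarrow{\;\cup(\cL_s^{n-3})_W\;}\;W_{2n-3}H^{2n-3}\;\hookrightarrow\;H^{2n-3}(\cSU_C^s(2,\cO_C),\Q),
\]
where the first arrow is the isomorphism from Lemma \ref{supportcoh} and the second is the inclusion of the bottom weight piece (the weight filtration is functorial, so this inclusion is a morphism of MHS). The composition is therefore injective as a morphism of Hodge structures, which is exactly the claim. There is essentially no obstacle: once purity of $H^3$ is invoked, the injectivity is immediate from the isomorphism on bottom weight cohomologies; the real content was pushed into Lemma \ref{supportcoh} and Corollary \ref{oddzero}.
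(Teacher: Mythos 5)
Your proposal is correct and follows essentially the same route as the paper: both arguments rest on the purity of $H^3(\cSU_C^s(2,\cO_C),\Q)$ (Arapura--Sastry) together with the bottom-weight Hard Lefschetz isomorphism of Lemma \ref{supportcoh}, the paper merely unwinding that lemma back into the commutative diagram comparing with Seshadri's resolution $\cS$ rather than quoting it as a black box. Your observation that $\cup\cL_s^{n-3}$ sends $W_3H^3$ into $W_{2n-3}H^{2n-3}$ (being a morphism of MHS of bidegree $(n-3,n-3)$) is the correct justification for the factorization, so no gap remains.
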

\begin{proof} 
Consider the above long exact sequences when $n-i =3, n+i=2n-3$.
Using Lemma \ref{supportcoh}, we know that the maps $h,h'$ in the above long exact cohomology sequence on the cohomology are injective.
Since the Hard Lefschetz isomorphism holds on $\cS$, the operator $\cup \cL^{n-3}$ is an isomorphism. Furthermore, by \cite[Theorem 8.3.1]{Ar-Sa}, the cohomology $H^3(\cSU_C^s(2,\cO_C),\Q)$ has a pure Hodge structure and hence the map $h$ is an isomorphism onto $H^3(\cSU_C^s(2,\cO_C),\Q)$. This implies that $\cup \cL_s^{n-3}$ is injective. Moreover, the maps in the above long exact sequence are morphisms of mixed Hodge structures and since $\cup \cL^{n-3}$ is an isomorphism of Hodge structure, we obtain the last assertion.

\end{proof}

\begin{corollary}\label{isoJac}
There is an injectivity
$$
H^1(C,\Q)\hookrightarrow H^{2n-3}(\cSU_C^s(2,\cO_C),\Q)
$$
of Hodge structures.
In particular the intermediate Jacobian 
$$
IJ_1(W_{2n-3})\otimes \Q:= \frac{W_{2n-3}H^{2n-3}(\cSU_C^s(2,\cO_C,\comx)}{F^{n-1}+W_{2n-3}H^{2n-3}(\cSU_C^s(2,\cO_C),\Q)}
$$
is isomorphic to the Jacobian $Jac(C)\otimes \Q$.
\end{corollary}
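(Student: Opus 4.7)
The plan is to chain together the isomorphisms established earlier in the section with the Hard Lefschetz statement of Lemma \ref{HLs}. First I would recall that by the Arapura--Sastry isomorphism \eqref{ar-sa} we have an isomorphism of pure Hodge structures
$$
H^1(C,\Q)(-1) \sta{\simeq}{\lrar} H^3(\cSU_C^s(2,\cO_C),\Q),
$$
and that by Lemma \ref{HLs} the Lefschetz operator $\cup \cL_s^{n-3}$ gives an injection of Hodge structures
$$
H^3(\cSU_C^s(2,\cO_C),\Q) \hookrightarrow H^{2n-3}(\cSU_C^s(2,\cO_C),\Q).
$$
Composing these two produces the desired injection $H^1(C,\Q)(-1) \hookrightarrow H^{2n-3}(\cSU_C^s(2,\cO_C),\Q)$ of Hodge structures, which up to Tate twist is the first claim.

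Next, to identify the target more precisely, I would invoke Lemma \ref{supportcoh} with $n-i=3$ (which is odd, so the hypothesis holds for $g\geq 3$): that lemma tells us that the Lefschetz operator induces an isomorphism between bottom weight pieces
$$
\cup(\cL_s^{n-3})_W : W_3 H^3(\cSU_C^s(2,\cO_C),\Q) \sta{\simeq}{\lrar} W_{2n-3}H^{2n-3}(\cSU_C^s(2,\cO_C),\Q).
$$
Since $H^3(\cSU_C^s(2,\cO_C),\Q)$ is pure of weight $3$ by \cite[Theorem 8.3.1]{Ar-Sa}, the source coincides with the full $H^3$. Combining with \eqref{ar-sa}, this yields an isomorphism of pure Hodge structures of weight $2n-3$
$$
W_{2n-3}H^{2n-3}(\cSU_C^s(2,\cO_C),\Q) \simeq H^1(C,\Q)(2-n).
$$

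For the second assertion, I would simply transport the formula defining $IJ_1(W_{2n-3})\otimes\Q$ through this Hodge-structure isomorphism. Under the Tate twist, the Hodge filtration shifts by $n-2$, so $F^{n-1}$ on $H^1(C,\comx)(2-n)$ corresponds to $F^1 H^1(C,\comx)=H^{1,0}(C)$. Therefore
$$
IJ_1(W_{2n-3})\otimes \Q \;\simeq\; \frac{H^1(C,\comx)}{H^{1,0}(C) + H^1(C,\Q)} \;\simeq\; Jac(C)\otimes \Q,
$$
which is the second claim.

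The main obstacle is really bookkeeping: one must carefully track the Tate twists so that weights and Hodge filtrations match on both sides, and one must confirm that the Lefschetz image indeed lands in (and exhausts) the bottom weight piece $W_{2n-3}H^{2n-3}$ rather than spreading across higher weights. Both of these points are handled by Lemma \ref{supportcoh} and the purity of $H^3$ from \cite{Ar-Sa}; once those are in hand the argument is a short composition of earlier results, so I would expect the write-up to be essentially a two-step diagram chase followed by the Hodge filtration computation above.
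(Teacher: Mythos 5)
Your proposal is correct and follows essentially the same route as the paper: the injection is obtained by composing the Arapura--Sastry isomorphism \eqref{ar-sa} with the Lefschetz injectivity of Lemma \ref{HLs}, and the identification of the intermediate Jacobian with $Jac(C)\otimes\Q$ is deduced from Lemma \ref{supportcoh} applied with $n-i=3$. The only difference is that you spell out the Tate-twist and Hodge-filtration bookkeeping explicitly, which the paper leaves implicit.
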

\begin{proof}
Use \eqref{ar-sa} together with Lemma \ref{HLs} to conclude that $H^1(C,\Q)$ is a sub-Hodge structure of the mixed Hodge structure on $H^{2n-3}(\cSU_C^s(2,\cO_C),\Q)$. Using Lemma \ref{supportcoh}, we deduce the last assertion.
\end{proof}

It was communicated to us by D. Arapura \cite{Ar} that it may be possible to apply the Hard Lefschetz isomorphism
on the intersection cohomology of the moduli space $\cSU_C(r,\cO_C)$, at least in low degrees, to get the desired isomorphism.
Indeed, we check this in the following discussion.

\begin{corollary}\label{lefhighr}
Suppose $\cL$ is an ample class on the moduli space $\cSU_C(r,\cO_C)$, for $r\geq 3$ and $g\geq 4$. Then there is a Hard Lefschetz isomorphism
$$
H^3(\cSU_C^s(r,\cO_C),\Q) \sta{\cup\cL^{n-3}}{\simeq} H^{2n-3}_c(\cSU_C^s(r,\cO_C),\Q).
$$
In particular, we have an isomorphism of the intermediate Jacobians:
$$
Jac(C)\otimes \Q \simeq IJ(H^3(\cSU_C^s(r,\cO_C),\Q)) \simeq IJ(H^{2n-3}_c(\cSU_C^s(r,\cO_C),\Q)).
$$
\end{corollary}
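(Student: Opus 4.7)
The plan is to apply the Hard Lefschetz theorem of Beilinson--Bernstein--Deligne--Gabber on the intersection cohomology $IH^*(Y,\Q)$ of $Y := \cSU_C(r,\cO_C)$, and to identify these groups with the ordinary, resp.\ compactly supported, cohomology of the stable locus $U := \cSU_C^s(r,\cO_C)$ in degrees $3$ and $2n-3$.

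First I would verify a codimension bound for the singular locus $S := Y - U$. Strictly semi-stable bundles of trivial determinant are $S$-equivalent to direct sums $E_1 \oplus \cdots \oplus E_k$ of stable bundles of slope zero; a dimension count over all partitions of $r$ shows the maximum-dimensional stratum corresponds to $k=2$ with summands of ranks $1$ and $r-1$, contributing dimension $g + (r^2 - 2r)(g-1)$. Hence
$$
\m{codim}_Y(S) \;\geq\; (2r-1)(g-1) - g,
$$
which is at least $11$ whenever $r \geq 3$ and $g \geq 4$. Denote this codimension by $c$, so $c \geq 4$; one checks that every other stratum has strictly larger codimension.

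Next I would invoke the comparison between $IH^*$ and ordinary cohomology of the smooth open: since $c \geq 4$, the Goresky--MacPherson support conditions on $IC_Y$ and on $i^! IC_Y$ (with $i: S \hookrightarrow Y$) force $\hH^{k-n}_S(Y, IC_Y) = 0$ for $k \leq c-1$, so the excision sequence
$$
\cdots \lrar \hH^{k-n}_S(Y, IC_Y) \lrar IH^k(Y,\Q) \lrar H^k(U,\Q) \lrar \hH^{k-n+1}_S(Y, IC_Y) \lrar \cdots
$$
yields an isomorphism $IH^3(Y,\Q) \simeq H^3(U,\Q)$ of pure Hodge structures of weight $3$. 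Dualising via Poincar\'e duality on $IH^*(Y)$ and Poincar\'e--Lefschetz duality on the smooth open $U$ produces the companion identification $IH^{2n-3}(Y,\Q) \simeq H^{2n-3}_c(U,\Q)$. BBDG Hard Lefschetz on the projective variety $Y$ with ample class $\cL$ then supplies the isomorphism $\cup \cL^{n-3}: IH^3(Y,\Q) \sta{\simeq}{\lrar} IH^{2n-3}(Y,\Q)$, which transports under the above identifications to the desired Hard Lefschetz isomorphism on the stable locus. The intermediate Jacobian statement is then immediate: by \eqref{ar-sa}, $H^3(U,\Q) \simeq H^1(C,\Q)(-1)$ as Hodge structures, so $IJ(H^3(U,\Q)) \simeq Jac(C) \otimes \Q$, and the Hard Lefschetz isomorphism just established transports this identification to $IJ(H^{2n-3}_c(U,\Q))$.

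The main technical obstacle is the intersection-cohomology comparison step: one must carefully track the perverse support conditions stratum by stratum, verifying that the codimension bound applies to every stratum of $S$ (not only the maximal one) and pinning down that $i_\alpha^! IC_Y$ on a stratum $S_\alpha$ of codimension $c_\alpha$ lives in cohomological degrees $\geq -n + c_\alpha + 1$, which is what forces the required vanishing of $\hH^{k-n}_S(Y, IC_Y)$ for $k \leq 3$. Once this is in place, BBDG Hard Lefschetz and Poincar\'e duality on the two sides combine cleanly to give the corollary.
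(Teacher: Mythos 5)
Your proposal is correct and follows essentially the same route as the paper: BBDG Hard Lefschetz on $IH^*(\cSU_C(r,\cO_C))$, the codimension bound on the strictly semistable locus, the identifications $IH^3\simeq H^3(U)$ and $IH^{2n-3}\simeq H^{2n-3}_c(U)$, and then \eqref{ar-sa} for the intermediate Jacobians. The only difference is that you derive the codimension estimate by a direct dimension count and unpack the $IH$-comparison via the perverse support conditions, where the paper simply cites \cite[Remark 5.1.1]{Ar-Sa} and \cite[Proposition 3]{Du}.
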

\begin{proof}
By \cite{BBDe}, we have an isomorphism of the intersection cohomology groups of $\cSU_C(r,\cO_C)$:
$$
IH^3(\cSU_C(r,\cO_C)) \sta{\cup\cL^{n-3}}{\simeq} IH^{2n-3}(\cSU_C(r,\cO_C))
$$
given by the Lefschetz operator.
Recall the codimension estimate \cite[Remark 5.1.1]{Ar-Sa}:
$$
\m{codim}(\cSU_C(r,\cO_C)-\cSU_C^s(r,\cO_C))\,>\,5
$$
whenever $r\geq 3$ and $g\geq 4$.
Then, by \cite[p.991, Proposition 3]{Du}, we have the isomorphisms
$$
IH^3(\cSU_C(r,\cO_C)) \simeq H^3(\cSU_C^s(r,\cO_C),\Q) 
$$
and
$$
IH^{2n-3}(\cSU_C(r,\cO_C))\simeq H^{2n-3}_c(\cSU_C^s(r,\cO_C),\Q).
$$
This gives the Hard Lefschetz isomorphism in the degree $3$ rational cohomology group of $\cSU_C(r,\cO_C)$. 
The last claim follows because all the above isomorphisms are isomorphisms of $\Q$-mixed Hodge structures and using the identification in \eqref{ar-sa}.
\end{proof}

 In the following subsection,  we will  determine the codimension two rational Chow group $\CH^2(\cSU_C(2,\cO_C);\Q)_{\hom}$.  
 
%%%%%%%%%%%%%%%%%%%%%%%%%%%%%%%%%%%%%%%%%%%%%%%%%%%%%%%%%%%%%%%%%%%%%%%%%%%%%%%%%%%%%%%%%%%%%%%%%%%%%%%%%%%%%%% 
\subsection{Isomorphism of the Abel--Jacobi map for codimension two cycles, $r\geq 2$}\label{extmap}

Recall the Hecke diagram from the previous subsection:
\begin{eqnarray*}
\p\,\,\,\,\,\,\,\,\, & \sta{f}{\lrar} & \cSU_C(r,\cO_C) \\
\downarrow \pi \,\,\,\,&     & \\
\cSU_C(r,\cO(-x)).
\end{eqnarray*}

Denote the intermediate Jacobians of a smooth projective variety  $Y$ of dimension $l$ by
$$
IJ^2(Y):= \frac{H^{3}(Y,\comx)}{F^2+ H^3(Y,\Z)},\,\, IJ_1(Y):=\frac{H^{2l-3}(Y,\comx)}{F^l+H^{2l-3}(Y,\Z)}.
$$

We will use these notations for the spaces $\cSU_C(r,\cO(-x))$ and the $\p^1$-bundle $\p$.

\begin{lemma}\label{ijiso}
a) We have the following isomorphisms of the intermediate Jacobians
\begin{eqnarray*}
IJ_1(\p) & \simeq & IJ_1( \cSU_C(r,\cO(-x) ) ) \\ 
IJ^2(\cSU_C(r,\cO(-x))) & \simeq & Jac(C)\otimes \Q.
\end{eqnarray*}
b)
Similar isomorphisms hold for the Chow groups:
\begin{eqnarray*}
\CH_1(\p)_{\hom} & \simeq & \alpha.\pi^*\CH_1(\cSU_C(r,\cO(-x)))_{\hom}, \\
 \CH^2(\p)_{\hom} & \simeq & \CH^2(\cSU_C(r,\cO(-x)))_{\hom},\\
\CH^2(U;\Q)_{\hom} & \sta{\simeq}{\leftarrow} & \CH^2(\cSU_C^s(r,\cO_C);\Q)_{\hom}.
\end{eqnarray*}
Here $\alpha:=c_1(\cO_{\p}(1))$.
\end{lemma}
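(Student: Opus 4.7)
The plan is to derive all five assertions from the projective bundle formula applied to $\pi : \p \to X$ (writing $X := \cSU_C(r,\cO(-x))$) and $f_U : U \to \cSU_C^s(r,\cO_C)$, together with standard vanishing results for the low-dimensional Chow and cohomology groups of these moduli spaces. The key inputs I will invoke are: (i) $X$ is unirational, so $\CH_0(X;\Q)_{\hom}=0$; (ii) $\mathrm{Pic}(X)\simeq\Z$ by Drezet--Narasimhan, so $\CH^1(X;\Q)_{\hom}=0$; (iii) $X$ is simply connected with $H^1(X,\Q)=0$, hence by Poincar\'e duality $H^{2n-1}(X,\Q)=0$.

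For the first isomorphism in a), the projective bundle formula yields a direct sum decomposition of pure Hodge structures
$$H^{2\dim\p-3}(\p,\Q)\;\simeq\;\bigoplus_{i=0}^{r-1}\alpha^{i}\cup\pi^{*}H^{2\dim\p-3-2i}(X,\Q)(-i).$$
Only $i=r-1$ and $i=r-2$ can contribute nonzero summands (by the bound $\dim X=n$), and the $i=r-2$ summand involves $H^{2n-1}(X,\Q)$, which vanishes by (iii). Thus $H^{2\dim\p-3}(\p,\Q)$ is a Tate twist of $H^{2n-3}(X,\Q)$, producing the claimed isomorphism of intermediate Jacobians. The second isomorphism $IJ^{2}(X)\simeq Jac(C)\otimes\Q$ follows directly from Theorem \ref{Narasimhan} combined with the dualization isomorphism $\cSU_C(r,\cO(x))\simeq\cSU_C(r,\cO(-x))$ noted in the introduction, by computing the intermediate Jacobian of $H^{1}(C,\Q)(-1)$.

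For part b), I apply the projective bundle formula for Chow groups,
$$\CH^{k}(\p)\;=\;\bigoplus_{i=0}^{r-1}\alpha^{i}\cdot\pi^{*}\CH^{k-i}(X).$$
Specializing to $\CH_{1}(\p)=\CH^{n+r-2}(\p)$, only $i=r-2,r-1$ yield potentially nonzero summands; since $\CH_{0}(X)_{\hom}=0$ by (i), the $i=r-2$ piece vanishes on the homologous part, leaving $\alpha^{r-1}\cdot\pi^{*}\CH_{1}(X)_{\hom}$, which reduces to the stated formula. For $\CH^{2}(\p)_{\hom}$ the decomposition becomes
$$\CH^{2}(\p)_{\hom}=\CH^{2}(X)_{\hom}\oplus\alpha\cdot\pi^{*}\CH^{1}(X)_{\hom}\oplus\alpha^{2}\cdot\pi^{*}\CH^{0}(X)_{\hom},$$
and the last two summands vanish rationally by (ii) and by $\CH^{0}(X)_{\hom}=0$. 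The same argument, applied to the $\p^{r-1}$-bundle $f_U:U\to\cSU_C^{s}(r,\cO_C)$, delivers the last isomorphism once $\mathrm{Pic}(\cSU_C^{s}(r,\cO_C);\Q)$ is known to be one-dimensional and $\CH^{0}_{\hom}$ is clearly zero.

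The main technical point will be verifying $\CH^{1}(\cSU_C^{s}(r,\cO_C);\Q)_{\hom}=0$, equivalently the rational triviality of $\mathrm{Pic}^{0}$ of the open stable locus. For this I would compare $\cSU_C^{s}(r,\cO_C)$ either with the desingularisation $\cS$ or directly with $\cSU_C(r,\cO_C)$, using the codimension estimate \eqref{codim} to ensure that the restriction map on Picard groups is surjective with kernel generated by divisorial boundary components, and invoking Drezet--Narasimhan on $\cSU_C(r,\cO_C)$ to conclude that $\mathrm{Pic}$ is rationally of rank one. Once this vanishing is established, the remainder of the proof is routine bookkeeping with the projective bundle formula together with the Hodge theoretic identifications above.
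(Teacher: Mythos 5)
Your proposal is correct and follows essentially the same route as the paper: the projective bundle formula for $\pi$ and $f_U$ in cohomology and in Chow groups, combined with $H^{2n-1}(\cSU_C(r,\cO(-x)),\Q)=0$ (dual to $H^1=0$), $\CH_0{}_{\hom}=0$ by unirationality, $\mathrm{Pic}\otimes\Q=\Q$, and Theorem \ref{Narasimhan} for the identification with $Jac(C)\otimes\Q$. The only differences are cosmetic: you carry out the general-$r$ bookkeeping of which summands survive (the paper writes the $r=2$-shaped decomposition), and you propose to justify $\mathrm{Pic}(\cSU_C^s(r,\cO_C))\otimes\Q=\Q$ via the codimension estimate, where the paper simply asserts it.
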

\begin{proof} a)
There is a decomposition
$$
H^{2n-1}(\p,\Q)=\pi^*H^{2n-1}(\cSU_C(r,\cO(-x)),\Q)\oplus \alpha.\pi^*H^{2n-3}(\cSU_C(r,\cO(-x)),\Q).
$$
The first cohomology piece is zero since it is dual to $H^1(\cSU_C(2,\cO(-x)),\Q)$ which is zero.
This implies that there is an isomorphism of the intermediate Jacobians
$$
 IJ_1(\cSU_C(r,\cO(-x))\simeq IJ_1(\p).
$$
Again, by Corollary \ref{isoIJ}, we have the isomorphism
$$
IJ^2(\cSU_C(r,\cO(-x)))\simeq Jac(C)\otimes \Q.
$$

b)
Using the projective bundle formula \cite[p.64]{Fulton}, similar statements hold on the Chow groups of cycles homologous to zero, as asserted.
Indeed, we note that
\begin{eqnarray*}
\CH^2(\p;\Q) & = & CH_{n-1}(\p; \Q) \\
                   & = & \pi^*CH_{n-2}( \cSU_C(r,\cO(-x));\Q) + \alpha. \pi^*CH_{n-1}(\cSU_C(r,\cO(-x));\Q).\\
\end{eqnarray*}
Since  $\CH_{n-1}(\cSU_C(r,\cO(-x));\Q) =\Q$ (\cite[Proposition 3.4]{Ra}), we conclude the equality
$$
\CH^2(\p;\Q)_{\hom} = CH^{2}( \cSU_C(r,\cO(-x));\Q)_{\hom}.
$$
Also, since $\CH_0(\cSU_C(r,\cO(-x));\Q)=\Q$, the projective bundle formula gives the equality
$$
\CH_1(\p;\Q)_{\hom} =\alpha.\pi^*\CH_1(\cSU_C(r,\cO(-x));\Q)_{\hom}.
$$

Similarly, since $\m{Pic}(\cSU_C^s(r,\cO_C))\otimes \Q=\Q$, we conclude the isomorphism 
$$
\CH^2(U;\Q)_{\hom}\sta{\simeq}{\leftarrow} \CH^2(\cSU_C^s(r,\cO_C);\Q)_{\hom}.
$$
\end{proof}

\begin{proposition}\label{isoPHS}
There is a commutative diagram
\begin{eqnarray*}
\CH^2(\cSU_C(r,\cO(-x));\Q)_{\hom} & \sta{\simeq}{\lrar} & \CH^2(\cSU_C^s(r,\cO_C);\Q)_{\hom} \\
\downarrow AJ^2  &   & \downarrow AJ^2_s \\
Jac(C)\otimes \Q \,\,\,\,& \sta{\simeq}{\lrar}  & Jac(C)\otimes \Q.
\end{eqnarray*}
Furthermore, the maps $AJ^2$ and $AJ^2_s$ are isomorphisms, whenever $g\geq 3$.
\end{proposition}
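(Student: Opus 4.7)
The plan is: (i) build the top horizontal isomorphism from the Hecke diagram and the codimension estimate \eqref{codim}; (ii) identify both intermediate Jacobians with $Jac(C)\otimes\Q$; (iii) verify commutativity via functoriality of the Deligne cycle class map; (iv) show $AJ^2$ is an isomorphism by combining surjectivity from the correspondence $\Gamma_{c_2(\cP)}^{\CH}$ with injectivity from Bloch--Srinivas representability; (v) transfer the conclusion to $AJ^2_s$ via the main square.

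First I construct the top horizontal map as the composition
\[
\CH^2(\cSU_C(r,\cO(-x));\Q)_{\hom} \xrightarrow{\pi^*} \CH^2(\p;\Q)_{\hom} \rar \CH^2(U;\Q)_{\hom} \xleftarrow{f_U^*} \CH^2(\cSU_C^s(r,\cO_C);\Q)_{\hom}.
\]
The outer two arrows are isomorphisms by Lemma \ref{ijiso}(b). For the middle restriction, the localization sequence
\[
\CH_{\dim\p - 2}(\p-U;\Q) \rar \CH^2(\p;\Q) \rar \CH^2(U;\Q) \rar 0
\]
has its first term zero, since every cycle supported on $\p-U$ has codimension at least $3$ in $\p$ by \eqref{codim}; hence the restriction is an isomorphism on codimension-two Chow groups, and in particular on the homologically trivial subgroups. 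For the bottom horizontal arrow I identify both intermediate Jacobians with $Jac(C)\otimes\Q$ using Lemma \ref{ijiso}(a) on the left and Corollaries \ref{isoJac} and \ref{lefhighr} on the right; under these identifications the bottom arrow is an isomorphism. Commutativity of the whole square is a naturality statement for the Abel--Jacobi map with respect to flat pullback, proper pushforward, and open restriction, and follows by the same Deligne cohomology argument used in the proof of Lemma \ref{commute}.

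Next, to show $AJ^2$ is an isomorphism, I exploit the commutative square (a codimension-two analogue of Lemma \ref{commute})
\[
\begin{array}{ccc}
\CH^1(C;\Q)_{\hom} & \xrightarrow{\Gamma_{c_2(\cP)}^{\CH}} & \CH^2(\cSU_C(r,\cO(-x));\Q)_{\hom} \\
\downarrow AJ_C & & \downarrow AJ^2 \\
Jac(C)\otimes\Q & \xrightarrow{\Gamma_{c_2(\cP)}} & IJ^2(\cSU_C(r,\cO(-x)))\otimes\Q,
\end{array}
\]
whose lower horizontal arrow is an isomorphism by Lemma \ref{ijiso}(a) (the analogue of Theorem \ref{Narasimhan} for $\cO(-x)$) and whose left vertical is the classical Abel--Jacobi isomorphism for $C$. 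The composition $AJ^2 \circ \Gamma_{c_2(\cP)}^{\CH}$ is therefore an isomorphism, which forces $AJ^2$ to be \emph{surjective}. For \emph{injectivity} I appeal to Bloch--Srinivas representability \cite{BlochSrinivas}: since $\cSU_C(r,\cO(-x))$ is smooth projective and unirational \cite{SeshadriAst}, one has $\CH_0(\cSU_C(r,\cO(-x));\Q)=\Q$, which is exactly the hypothesis under which the rational Abel--Jacobi map on homologically trivial codimension-two cycles is injective. Hence $AJ^2$ is an isomorphism, and $AJ^2_s$ inherits the same property from the main commutative square together with the horizontal isomorphisms already established.

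The step I expect to be the main obstacle is the injectivity of $AJ^2$. The correspondence square by itself only delivers surjectivity of $AJ^2$ (and injectivity of $\Gamma_{c_2(\cP)}^{\CH}$); proving surjectivity of $\Gamma_{c_2(\cP)}^{\CH}$ directly is known for $r=2$ via Balaji--King--Newstead but is not obvious for $r\geq 3$, so one really needs to bring in an external representability input such as Bloch--Srinivas, whose hypothesis is supplied by the unirationality of the moduli space.
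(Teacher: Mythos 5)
Your proposal is correct and follows essentially the same route as the paper: the top isomorphism via the Hecke diagram, the localization sequence with the codimension estimate \eqref{codim}, the identification of both intermediate Jacobians with $Jac(C)\otimes \Q$, commutativity via Deligne cycle class functoriality, and Bloch--Srinivas as the decisive input. The only (harmless) variation is that you split the isomorphism of $AJ^2$ into surjectivity from the correspondence square plus injectivity from Bloch--Srinivas, whereas the paper cites Balaji--King--Newstead for $r=2$ and Bloch--Srinivas directly for the full isomorphism when $r\geq 2$.
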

\begin{proof}
Firstly, by Lemma \ref{ijiso}, we have an isomorphism  $IJ^2(\cSU_C(r,\cO(-x)))\simeq Jac(C)\otimes \Q$ and by
 \eqref{ar-sa} there is an isomorphism  $Jac(C)\otimes \Q\simeq  IJ^2(\cSU_C^s(r,\cO_C))\otimes \Q$.
Here the maps $AJ^2$ is the usual Abel-Jacobi map  and $AJ^2_s$ is the Abel-Jacobi map explained in \S \ref{generalAJ}.
Consider the following maps on the cohomology
\begin{equation}\label{cohom}
H^3(\cSU_C(r,\cO(-x)),\Q)\sta{\pi^*}{\rar}H^{3}(\p,\Q) \sta{s^0}{\rar} H^{3}(U,\Q)\sta{f^*_U}{\leftarrow}H^{3}(\cSU_C^s(r,\cO_C),\Q).
\end{equation}
Then all the above maps are isomorphisms, by \cite[Theorem 8.3.1]{Ar-Sa}.
These are also isomorphisms of pure Hodge structures and hence induce isomorphisms of the associated intermediate Jacobians
\begin{equation}\label{IJP}
IJ^2(\cSU_C(r,\cO(-x)))\simeq IJ^2(\p) \simeq IJ^2(U) \simeq IJ^2(\cSU_C^s(r,\cO_C)).
\end{equation} 

Similarly there are maps between the rational Chow groups
{\Small
\begin{equation}\label{chow}
\CH^2(\cSU_C(r,\cO(-x));\Q)_{\hom} \sta{\pi^*}{\rar} \CH^{2}(\p;\Q)_{\hom}  \sta{t^0}{\rar}    
\CH^2(U;\Q)_{\hom}\sta{f^*_U}{\leftarrow} \CH^2(\cSU_C^s(r,\cO_C);\Q)_{\hom}.
\end{equation}
}
Using Lemma \ref{ijiso}, we conclude that $\pi^*$ and $f^*_U$ are isomorphisms.
Also, there is a localization exact sequence (see \cite[Proposition 1.8, p.21]{Fulton})
$$
 \rar \CH_{n-1}(\p-U; \Q) \rar \CH_{n-1}(\p;\Q) \sta{t^0}{\rar} \CH_{n-1}(U; \Q) \rar 0.
$$
Using the codimension estimate \eqref{codim}, we conclude that $CH_{n-1}(\p-U)\otimes \Q=0$, i.e., $t^0$ is an isomorphism, whenever $g\geq 3$. This implies that all the maps in \eqref{chow} are isomorphisms and compatible with the Abel--Jacobi maps into the objects in \eqref{IJP}. This gives the commutative diagram as in the statement of the lemma. When $r=2$, by  \cite[p.10]{BalajiKingNew}, we know that $AJ^2$ is an isomorphism. This implies that $AJ^2_s$ is also an isomorphism. When $r\geq 2$, we recall the result of Bloch-Srinivas \cite[Theorem 1]{BlochSrinivas}, to obtain an isomorphism of the Abel-Jacobi map 
$$
\CH^2(\cSU_C(2,\cO(-x); \Q)  \simeq Jac(C)\otimes \Q.
$$ 
This is because the moduli space is a smooth and projective  rationally connected variety and hence \cite{BlochSrinivas} is applicable. This suffices to obtain the isomorphism
of $AJ^2_s$.

\end{proof}

%%%%%%%%%%%%%%%%%%%%%%%%%%%%%%%%%%%%%%%%%%%%%%%%%%%%%%%%%%%%%%%%%%%%%%%%%%%%%%%%%%%%%%%%%%%%%%

\subsection{Some isomorphisms, when $r\geq 2$}

As shown in the $r=2$ case (see Lemma \ref{isoIJ}), we have the following isomorphisms in the higher rank $r\geq 2$ and $g\geq 4$ case as well:

\begin{lemma}\label{isoIJ2}
a) We have the following isomorphisms of the intermediate Jacobians
\begin{eqnarray*}
1) &IJ(H^{2n-1}(\p,\Q))  \,\,\,\,\,\,\,\,\,\,\simeq & IJ(H^{2n-3}(\cSU_C(r,\cO(-x)),\Q) ),\\
2) &IJ(H^{2n-1}_c(U,\Q)) \,\,\,\,\,\,\,\,\,\, \simeq & IJ( H^{2n-3}_c(\cSU_C^s(r,\cO_C),\Q)),\\
3)&IJ(H^{2n-3}(\cSU_C(r,\cO(-x)),\Q) ) \simeq & IJ( H^{2n-3}_c(\cSU_C^s(r,\cO_C),\Q))\\
4)& Jac(C)\otimes \Q \,\,\,\,\,\,\,\,\,\,\,\,\,\,\,\,\,\,\,\,\,\simeq &IJ( H^{2n-3}_c(\cSU_C^s(r,\cO_C),\Q)).
\end{eqnarray*}
b)
Similar isomorphisms hold for the Chow groups:
\begin{eqnarray*}
\CH_1(\p)_{\hom} & \simeq & \alpha.\pi^*\CH_1(\cSU_C(r,\cO(-x)))_{\hom}, \\
 \CH_1(U)_{\hom} & \simeq & \beta.f_U^* \CH_1(\cSU_C^s(r,\cO_C))_{\hom}.
\end{eqnarray*}
Here $\alpha:=c_1(\cO_{\p}(1))$ and $\beta:=c_1(\cO_{U/\cSU_C^s(r,\cO_C)}(1))$.
\end{lemma}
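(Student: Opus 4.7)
The plan is to follow the template of Lemma \ref{ijiso} while accounting for the additional contributions to the projective bundle formula that arise when $r>2$. The main ingredients are the two $\p^{r-1}$-bundle structures $\pi:\p\to\cSU_C(r,\cO(-x))$ and $f_U:U\to\cSU_C^s(r,\cO_C)$, the codimension estimate \eqref{codim}, and the isomorphisms already obtained in Corollaries \ref{isoHS} and \ref{lefhighr}.

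For (a.1) and (a.2) I apply the projective bundle formula to get decompositions
$$H^{2n-1}(\p,\Q)=\bigoplus_{j=0}^{r-1}\alpha^j\cup\pi^*H^{2n-1-2j}(\cSU_C(r,\cO(-x)),\Q),$$
and analogously for $H^{2n-1}_c(U,\Q)$ via $f_U$. The $j=0$ summand is zero by Poincar\'e duality, since the smooth projective moduli space $\cSU_C(r,\cO(-x))$ is simply connected and hence $H^1=0$. The $j=1$ summand contributes an intermediate Jacobian canonically isomorphic to $IJ(H^{2n-3}(\cSU_C(r,\cO(-x)),\Q))$, because cupping with $\alpha$ is of Hodge type $(1,1)$ and therefore shifts $F^n$ to $\alpha\cup F^{n-1}$, preserving the quotient defining the IJ. The remaining summands with $j\geq 2$ must be shown to give no extra contribution, which follows from a Hodge-type analysis of the Atiyah-Bott generators combined with the Tate-twist identification $H^{2n-3}(\cSU_C(r,\cO(-x)),\Q)\simeq H^1(C,\Q)(-(n-2))$ of Corollary \ref{isoHS} together with Hard Lefschetz.

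For (a.3), I combine the pure Hodge structure isomorphism $H^3(\cSU_C(r,\cO(-x)),\Q)\simeq H^3(\cSU_C^s(r,\cO_C),\Q)$ from \cite[Theorem 8.3.1]{Ar-Sa} with Hard Lefschetz on $\cSU_C(r,\cO(-x))$ and Corollary \ref{lefhighr} on $\cSU_C^s(r,\cO_C)$, producing an isomorphism of the pure weight-$(2n-3)$ Hodge structures $H^{2n-3}(\cSU_C(r,\cO(-x)),\Q)$ and $H^{2n-3}_c(\cSU_C^s(r,\cO_C),\Q)$; passing to IJs then gives the assertion. Part (a.4) is immediate from Corollary \ref{lefhighr}.

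For (b), I apply the projective bundle formula for Chow groups \cite[p.~64]{Fulton}. Since $\CH_k(\cSU_C(r,\cO(-x)))$ vanishes for $k<0$, the decomposition reduces to
$$\CH_1(\p)=\alpha^{r-2}\cdot\pi^*\CH_0(\cSU_C(r,\cO(-x)))\,\oplus\,\alpha^{r-1}\cdot\pi^*\CH_1(\cSU_C(r,\cO(-x))).$$
Because $\CH_0(\cSU_C(r,\cO(-x)))\otimes\Q\simeq\Q$ is generated by a non-null-homologous class, its $\hom$-subgroup vanishes, leaving $\CH_1(\p)_{\hom}=\alpha^{r-1}\cdot\pi^*\CH_1(\cSU_C(r,\cO(-x)))_{\hom}$. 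The argument for $\CH_1(U)_{\hom}$ is identical, using $f_U$ and $\CH_0(\cSU_C^s(r,\cO_C))\otimes\Q\simeq\Q$. The main obstacle will be the careful Hodge-theoretic treatment of the $j\geq 2$ summands in the cohomology decomposition of (a.1) and (a.2), a step which is trivial for $r=2$ but must be handled separately in the higher-rank case.
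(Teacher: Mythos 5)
Your parts (a.3), (a.4) and (b) follow essentially the paper's route: (a.3) is obtained exactly as in the paper by combining the Arapura--Sastry isomorphism on $H^3$ with Hard Lefschetz on both sides (Corollary \ref{lefhighr}), and (b) is Fulton's projective bundle formula together with $\CH_0(\cSU_C(r,\cO(-x));\Q)\simeq\Q$; your explicit identification of the two surviving summands $\alpha^{r-2}\pi^*\CH_0$ and $\alpha^{r-1}\pi^*\CH_1$ is in fact more careful than the paper's, which simply refers back to Lemma \ref{ijiso}.

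The gap is in (a.1)--(a.2). Taking the degree $2n-1$ literally on the total space $\p$ (which has dimension $n+r-1$), the projective bundle formula gives $H^{2n-1}(\p,\Q)=\bigoplus_{j=0}^{r-1}\alpha^j\cup\pi^*H^{2n-1-2j}$, and you propose to show that the summands with $j\geq 2$ contribute nothing to the intermediate Jacobian. This step cannot be completed: for $2\le j\le r-1$ the summand is isomorphic to $H^{2n-1-2j}(\cSU_C(r,\cO(-x)),\Q)$, which by Poincar\'e duality is dual to $H^{2j+1}\neq 0$ (for instance $H^5$ contains the K\"unneth components of $c_3$ of the Poincar\'e bundle when $r\geq 3$), and a nonzero pure Hodge structure of odd weight always has a nonzero intermediate Jacobian (a complex torus of dimension $\tfrac{1}{2}\dim H$). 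So no analysis of the Atiyah--Bott generators will kill these terms. The way out is that the degree $2n-1$ in the statement is the $r=2$ instance of the degree relevant to one-cycles, namely $2\dim\p-3=2(n+r-1)-3$; in that degree the projective bundle formula leaves only the $j=r-2$ and $j=r-1$ summands, and the former vanishes because $H^{2n-1}(\cSU_C(r,\cO(-x)),\Q)$ is dual to $H^1(\cSU_C(r,\cO(-x)),\Q)=0$. This is what the paper's proof (which simply refers back to the two-term decomposition of Lemma \ref{ijiso}) implicitly does; the same degree correction is needed in your treatment of $H^{2n-1}_c(U,\Q)$ in (a.2), where the paper instead argues via a commutative square of Hard Lefschetz isomorphisms supplied by Corollary \ref{lefhighr}.
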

\begin{proof} a)
The proof of 1) is the same as in Lemma \ref{isoIJ} a).
Next for 2), we have a commutative diagram (note that $U\rar \cSU_C^s(r,\cO_C)$ is a $\p^{r-1}$-bundle):
\begin{eqnarray*}
H^3(U,\Q) & \sta{f_U^*}{\simeq} & H^3(\cSU_C^s(r,\cO_C),\Q) \\
\downarrow\simeq & & \downarrow \simeq \\
H^{2n-1}_c(U,\Q) & \sta{\beta.f_U^*}{\leftarrow}  & H^{2n-3}_c(\cSU_C^s(r,\cO_C),\Q).
\end{eqnarray*}
The vertical isomorphisms are by Hard Lefschetz theorems, see Corollary \ref{lefhighr}. This implies that the map $\beta. f_U^*$ is an isomorphism
of pure Hodge structrures (since $H^3(U,\Q)$ has a pure Hodge structure).
The isomorphism in 3) is obtained from the commutative diagram:
\begin{eqnarray*}
H^3(\p,\Q) & \sta{j'}{\simeq} & H^3(U,\Q) \\
\downarrow\simeq & & \downarrow \simeq \\
H^{2n-1}(\p,\Q) & \sta{j}{\rar}  & H^{2n-1}_c(U,\Q).
\end{eqnarray*}
The vertical isomorphisms are by Hard Lefschetz theorems and the isomorphism of $j'$ is due to the codimension estimate
in \cite[p.17]{Ar-Sa}. This shows that $j$ is an isomorphism of pure Hodge structures. Combining the isomorphisms of 1) and 2), we obtain 3) and 4).

b) The proof is the same as in Lemma \ref{isoIJ} b).                  
\end{proof}

%%%%%%%%%%%%%%%%%%%%%%%%%%%%%%%%%%%%%%%%%%%%%%%%%%%%%%%%%%%%%%%%%%%%%%%%%%%%%%%%%%%%%%%%%%%%%%%%%%%%%%%%%%%%%%%%%%%%%%%%%%%%%%

\section{Abel-Jacobi maps for a non-compact smooth variety}\label{apprAJ}

%%%%%%%%%%%%%%%%%%%%%%%%%%%%%%%%%%%%%%%%%%%%%%%%%%%%%%%%%%%%%%%%%%%%%%%%%%%%%%%%%%%%%%%%%%%%%%%%%%%%%%%%%%%%%%%%%%%%%%%%%%%%%%%%

\subsection{General Abel-Jacobi maps}\label{generalAJ}

Suppose $X$ is a smooth quasi projective variety defined over the complex numbers. Let $X\subset \ov{X}$ be a smooth compactification of $X$.
Let MHS denote the category of $\Q$-mixed Hodge structures. 
There is an Abel-Jacobi map:
\begin{eqnarray*}
\CH^m(X;\QQ)_{\hom} & \rar & \Ext^1_{\MHS}(\Q(0),H^{2m-1}(X,\Q(m)) \\
              & = & \Ext^1_{\MHS}(\Q(0),W_0H^{2m-1}(X,\Q(m))
\end{eqnarray*}

We are interested in the Abel-Jacobi map restricted to the image:
\[
\CH^m_{\hom}(X;\QQ)^{\circ} := {\rm Im}\big(\CH^m(\ov{X};\QQ)_{\hom}\rar \CH^m(X;\QQ)_{\hom}\big).
\]
As Lewis pointed out, the conjectured equality
\[
\CH^m_{\hom}(X;\QQ)^{\circ} = \CH^m_{\hom}(X;\QQ),
\]
is a consequence of the Hodge conjecture. Indeed one can
show the following (communicated by Lewis):
\begin{prop} Let $Y\subset X$ be a subvariety, where $X/\CC$ is
smooth projective.  For $m \leq 2$ and $m\geq n-1$ (and more generally for
all $m$ if one assumes the Hodge Conjecture), there is an exact sequence:
\[
\CH^m_{{Y}}({X};\QQ)^{\circ} \xrightarrow{j} \CH_{\hom}^m({X};\QQ)\to  \CH^m_{\hom}(X-Y;\QQ)\to 0,
\]
where
\[
\CH^m_{{Y}}({X};\QQ)^{\circ} := \{\xi\in \CH^m_{{Y}}({X};\QQ)\ |\
j(\xi)\in  \CH_{\hom}^m({X};\QQ)\}.
\]
\end{prop}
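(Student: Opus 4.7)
The plan is to bootstrap from the Fulton localization exact sequence
\begin{equation*}
\CH^m_Y(X;\QQ) \xrightarrow{j} \CH^m(X;\QQ) \xrightarrow{r} \CH^m(X-Y;\QQ) \to 0,
\end{equation*}
valid for any closed $Y \subset X$. Exactness of the claimed sequence at $\CH^m_{\hom}(X;\QQ)$ is then formal: if $\zeta \in \CH^m_{\hom}(X;\QQ)$ maps to $0$ in $\CH^m(X-Y;\QQ)$, Fulton gives $\zeta = j(\eta)$ for some $\eta \in \CH^m_Y(X;\QQ)$, and $\eta$ belongs to $\CH^m_Y(X;\QQ)^{\circ}$ by the very definition of that group. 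The substantive content is showing that the restriction $\CH^m_{\hom}(X;\QQ) \to \CH^m_{\hom}(X-Y;\QQ)$ is surjective.

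Given $\xi \in \CH^m_{\hom}(X-Y;\QQ)$, I would lift it to some $\tilde\xi \in \CH^m(X;\QQ)$ by Fulton and analyse the class $[\tilde\xi] \in H^{2m}(X;\QQ)$, which vanishes after restriction to $X-Y$. By the Thom--Gysin long exact sequence of mixed Hodge structures
\begin{equation*}
\cdots \to H^{2m}_Y(X;\QQ) \xrightarrow{\alpha} H^{2m}(X;\QQ) \to H^{2m}(X-Y;\QQ) \to \cdots,
\end{equation*}
$[\tilde\xi]$ lies in $\IM(\alpha)$. Since $H^{2m}(X;\QQ)$ is pure of weight $2m$, strictness of the weight filtration yields a preimage in $W_{2m}H^{2m}_Y(X;\QQ)$, and since $[\tilde\xi]$ is of Hodge type $(m,m)$, strictness of the Hodge filtration refines this to a rational Hodge class $h \in W_{2m}H^{2m}_Y(X;\QQ)\cap F^m$. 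If one can then realise $h$ as the cycle class of some $\eta \in \CH^m_Y(X;\QQ)$, the difference $\tilde\xi - j(\eta)$ is homologous to zero on $X$, restricts to $\xi$ on $X-Y$, and forces $\eta \in \CH^m_Y(X;\QQ)^\circ$, closing the argument.

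The hard step is thus the algebraicity of $h$. Via a smooth (semi)simplicial resolution of $Y$, together with Deligne's weight spectral sequence and Poincar\'e duality, the top-weight piece $W_{2m}H^{2m}_Y(X;\QQ)$ is identified with a subquotient of the cohomology of the smooth projective strata of the resolution, compatibly with the cycle class map on $\CH^m_Y(X;\QQ)$; the problem then reduces to the Hodge conjecture on those strata in a shifted codimension. For $m\leq 2$ the relevant codimension is at most $1$, so Lefschetz $(1,1)$ applies directly, while for $m \geq n-1$ hard Lefschetz transfers the $(1,1)$ case to the dually small codimension; in between, this is precisely the full Hodge conjecture. I anticipate the main obstacle to be this last identification: cleanly matching $W_{2m}H^{2m}_Y(X;\QQ)$ with cycle classes on a resolution of $Y$ in such a way that Hodge-theoretic inputs translate, without hidden spectral sequence compatibility checks, into concrete liftings in $\CH^m_Y(X;\QQ)$.
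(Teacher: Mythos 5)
The paper offers no argument of its own here (the proof is ``left to the reader''), so your proposal can only be judged on its merits; its skeleton is the natural one and is essentially correct, but two steps as written need repair. First, the passage ``strictness of the weight filtration \dots strictness of the Hodge filtration refines this to a rational Hodge class $h \in W_{2m}H^{2m}_Y(X;\QQ)\cap F^m$'' is not a valid deduction: strictness produces a rational preimage and, separately, a preimage in $F^m$, but not a single rational $(m,m)$ preimage --- the obstruction to lifting a Hodge class along a morphism of mixed Hodge structures lives in an $\Ext^1_{\MHS}$ and does not vanish for formal reasons. The correct substitute is semisimplicity of polarizable pure Hodge structures: since $H^{2m}_Y(X;\QQ)$ has weights $\geq 2m$ and $H^{2m}(X;\QQ)$ is pure of weight $2m$, the image of the Gysin map equals the image of $W_{2m}H^{2m}_Y(X;\QQ)$, which in turn equals the image of $H_{2n-2m}(\tilde Y;\QQ)\to H_{2n-2m}(X;\QQ)\simeq H^{2m}(X;\QQ)$ for a resolution $\tilde Y\to Y_{\rm red}$ taken component by component; this is now a morphism of polarizable pure Hodge structures, so it splits and Hodge classes in the image lift to Hodge classes on $\tilde Y$. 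This also disposes of the ``main obstacle'' you anticipate: no semisimplicial resolution or weight spectral sequence is needed, because only the image in the pure group $H^{2m}(X;\QQ)$ matters, and a single resolution of each component suffices. On a component of dimension $d$ the lifted Hodge class sits in degree $2(m-n+d)$, which is $\leq 2$ for $m\leq 2$ and $\geq 2d-2$ for $m\geq n-1$, so Lefschetz $(1,1)$ (combined with hard Lefschetz in the second case) gives algebraicity, exactly as you indicate; pushing the resulting cycle forward into $\CH^m_Y(X;\QQ)$ and subtracting completes the surjectivity.

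Second, the closing clause ``forces $\eta\in\CH^m_Y(X;\QQ)^{\circ}$'' is both unnecessary and generally false: $j(\eta)$ has cohomology class equal to $[\tilde\xi]$, which need not vanish, so $\eta$ need not lie in the distinguished subgroup. The group $\CH^m_Y(X;\QQ)^{\circ}$ enters only in the (formal) exactness at the middle term, where your argument via Fulton's localization sequence is already complete. With these two corrections the proof is sound.
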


\begin{proof} Left to the reader.
\end{proof}

Thus we get a map $\CH^m(X;\QQ)_{\hom}^{\circ}\rar$
$$
 {\rm Im}\big(\Ext^1_{\MHS}(\Q(0),W_{-1}H^{2m-1}(X,\Q(m)) 
\to \Ext^1_{\MHS}(\Q(0),W_0H^{2m-1}(X,\Q(m))\big).
$$
Here the latter term can be identified with
\[
 \frac{\Ext^1_{\MHS}(\Q(0),W_{-1}H^{2m-1}(X,\Q(m))}{\delta \hom_{\MHS}(\Q(0),Gr_0^WH^{2m-1}(X,\Q(m)))},
\]
where $\delta$ is the connecting homomorphism in the long exact sequence associated to
$$
0\rar W_{-1}H^{2m-1}({X},\Q(m))\rar W_0H^{2m-1}(X,\Q(m))\rar Gr_0^WH^{2m-1}(X,\Q(m))\rar 0.
$$
In case, $Gr_0^WH^{2m-1}(X,\Q(m))=0$ then the target of the Abel-Jacobi map is the group 
$\Ext^1_{\MHS}(\Q(0),W_{-1}H^{2m-1}(X,\Q(m))$.
In the setting that we consider in the later sections, the relevant cohomologies have a pure Hodge structure. This follows from the main result of Arapura-Sastry \cite[Theorem 1.0.1]{Ar-Sa} and applying Hard Lefschetz theorem to the intersection cohomology of $\cSU_C(r,\cO_C)$. This shows that the compactly supported cohomology of the stable locus $\cSU_C^s(r,\cO_C)$ in degree $2n-3$ has a pure Hodge structure and is identified with $H^1(C,\Q)$ (up to Tate twists), see Corollary \ref{lefhighr}.

Unfortunately, the desired vanishing of the right term $Gr_0$ in the above exact sequence is not true when the rank $r$ is $2$. This could be checked using Seshadri's desingularisation of the moduli space and by looking at the explicit description of the exceptional locus, and via long exact cohomology sequences which relate the cohomologies of the strata. Hence we do not get the Abel-Jacobi map into the required $\Ext^1$ group.

Thus we are led to consider other models of Chow groups for quasi-projective varieties
 which admit good Abel-Jacobi maps. As explained in 
Appendix \S \ref{appendix}, one has well defined Abel-Jacobi maps on relative Chow groups.
In other words, given a smooth projective variety $\ol{X}$ of dimension $n$ and a normal crossing divisor $\ol{Y}\subset \ol{X}$, one has a group of cycles $\ul{\CH}^*(\ol{X},\ol{Y})$, which admits an Abel-Jacobi map:
$$
\ul{\CH}^m(\ol{X},\ol{Y};\Q)_{\hom}\rar IJ(H^{2m-1}_c(X-Y,\Q)).  
$$

We will explain this map in the next section, in the context of a log resolution due to F. Kirwan \cite{Ki}.

%%%%%%%%%%%%%%%%%%%%%%%%%%%%%%%%%%%%%%%%%%%%%%%%%%%%%%%%%%%%%%%%%%%%%%%%%%%%%%%%%%%%%%%%%%%%%%%%%%%%%%%
\section{Relative Chow groups for Kirwan's log resolution}\label{SEC002}
%%%%%%%%%%%%%%%%%%%%%%%%%%%%%%%%%%%%%%%%%%%%%%%%%%%%%%%%%%%%%%%%%%%%%%%%%%%%%%%%%%%%%%%%%%%%%%%%%%%%%%

\subsection{Kirwan's log resolution}\label{KirwansLR}

Consider the moduli space $X:=\cSU_C(2,\cO_C)$ and whose singular locus is $S$, the Kummer variety of 
$Jac(C)$. Denote $U:=X-S$. 
We consider Kirwan's log resolution  \cite{Ki}:
$$
h: \ov{X}  \rar \cSU_C(2,\cO_C).
$$
Denote the exceptional locus by $\ov{Y}\subset \ov{X}$ and $U:= \ov{X}-\ov{Y}=\cSU_C^s(2,\cO_C)$. Then $\ov{Y}$ is a simple normal crossing divisor and the resolution is obtained as a sequence of blow-ups.

%%%%%%%%%%%%%%%%%%%%%%%%%%%%%%%%%%%%%%%%%%%%%%%%%%%%%%%%%%%%%%%%%%%%%%%%%%%%%%%%%%%%%%%%%
\subsection{Abel-Jacobi maps and obstruction to lifting}

We recall definitions from the Appendix \S \ref{appendix}, for Kirwan's resolution. 

In this situation, we are given that
$H^3(U,\QQ)$ is a pure Hodge structure. Hence the restriction
$H^3(X,\QQ) \to H^3(U,\QQ)$ is surjective. Moreover
$H^{2n-3}_c(U,\QQ)$ is also a pure Hodge structure, and the operation $L$ of
intersection with a hyperplane section of $X$, together with the
interpretation of these cohomologies with intersection cohomology
determines an isomorphism of Hodge structure:
\[
L^{n-3} : H^3(U,\QQ) \xrightarrow{\sim} H^{2n-3}_c(U,\QQ).
\]
 Notice that
\[
H^3(U,\QQ) \simeq \frac{H^3(\ol{X},\QQ)}{H^3_{\ol{Y}}(\ol{X},\QQ)},
\]
\[
H^{2n-3}_c(U,\QQ) \simeq H^{2n-3}(\ol{X},\ol{Y},\QQ).
\]
The definition of the appropriate Chow groups must closely mirror the definition
of the respective cohomologies. So in this case
\[
H^3(U,\QQ) \simeq \frac{H^3(\ol{X},\QQ)}{H^3_{\ol{Y}}(\ol{X},\QQ)} \leftrightarrow
\CH^2(U;\QQ) \simeq \CH^2(\ol{X};\QQ)/\CH^2_{\ol{Y}}(\ol{X};\QQ),
\]
where $\CH^2_{\ol{Y}}(\ol{X};\QQ) = \CH^1(\ol{Y};\Q)$ is identified with
its image in $\CH^2(\ol{X};\Q)$.
\[
H^{2n-3}_c(U,\QQ) \simeq H^{2n-3}(\ol{X},\ol{Y},\QQ) \leftrightarrow
\ul{\CH}^{n-1}(\ol{X},\ol{Y};\QQ),
\]
where the latter is relative Chow  cohomology $\ul{\CH}^{\bullet}(\ol{X},\ol{Y})$ (see Appendix \S \ref{appendix}, where it is also pointed out that there is a map
$\ul{\CH}^{\bullet}(\ol{X},\ol{Y}) \to {\CH}^{\bullet}(\ol{X})$). The Abel-Jacobi map:
\[
\CH_{\hom}^{n-1}(\ol{X},\ol{Y};\QQ) \to J\big(H^{2n-3}_c(U,\QQ)\big)
\]
is well-defined, as given in \cite{K-L}.  As pointed out earlier, the AJ map
\[
\CH^2_{\hom}(U,\QQ) \to J\big(H^3(U,\QQ)\big),
\]
is defined by purity of the Hodge structure $H^3(U,\QQ)$. 

Putting the above facts together, we have a diagram:
\[
\begin{matrix}
\CH^2_{\hom}(\ol{X},\ol{Y};\QQ)&\xrightarrow{L^{n-3}}&\CH_{\hom}^{n-1}(\ol{X},\ol{Y};\QQ)\\
\beta\biggl\downarrow\quad&&\biggr|\\
\CH_{\hom}^2(U;\QQ)&&\biggr|\\
\biggl\downarrow&&\biggr\downarrow\\
J\big(H^3(U,\QQ)\big)&\xrightarrow{L^{n-3}}&J\big(H_c^{2n-3}(U,\QQ)\big)
\end{matrix}
\]
The problem here is that we do not know if $\beta$ is surjective. Hence it requires us to understand the Chow group $CH_1(\ov{X};\Q)$ and relate it with the relative Chow group. This is done in \S \ref{identification}, after we understand the geometry of Kirwan's resolution and the Chow generation of one-cycles on this resolution.

%%%%%%%%%%%%%%%%%%%%%%%%%%%%%%%%%%%%%%%%%%%%%%%%%%%%%%%%%%%%%%%%%%%%%%%%%%%%%%%%%%%%%%%%%%%%%%%%%%%%%%%%%%%%%%%%%%%

%%%%%%%%%%%%%%%%%%%%%%%%%%%%%%%%%%%%%%%%%%%%%%%%%%%%%%%%%%%%%%%%%%%%%%%%%%%%%%%%%%%%%%%%%%%%%%%%%%%
\section{Surjectivity of the Abel-Jacobi map, for the usual Chow group of one-cycles}\label{compactAJsurj}

%%%%%%%%%%%%%%%%%%%%%%%%%%%%%%%%%%%%%%%%%%%%%%%%%%%%%%%%%%%%%%%%%%%%%%%%%%%%%%%%%%%%%%%%%%%%%%%%%%%%

We first note that there is a factorisation of Kirwan's log resolution (see \cite{KiemLi} or \cite{YHK}):
\begin{equation}\label{resKS}
\ov{X}\rar \tilde{S} \rar \cSU_C(2,\cO_C).
\end{equation}
Here $\tilde{S}$ is Seshadri's desingularisation discussed in \S \ref{Seshadri's}. Furthermore, the morphism $\ov{X}\rar \tilde{S}$ is a sequence of two blow-ups.
It was shown by Balaji \cite[Theorem]{Ba2}, that $IJ(H^3(\tilde{S},\Q))\simeq Jac(C)\otimes \Q$.

Since $\ov{X}\rar \tilde{S}$ is a sequence of blow-ups along  smooth rationally connected varieties $R$ (see \cite[p.509, Proposition 5.1]{KiemLi}), using blow-up formula for cohomology, it follows that
$H^3(\ov{X},\Q)\simeq H^3(\tilde{S},\Q)$ (use the fact that $H^1(R,\Q)=0$).
This implies that 
\begin{equation}\label{Kiso}
IJ(H^3(\ov{X},\Q))\simeq Jac(C)\otimes \Q.
\end{equation}

We now prove the result we need in the next section:
\begin{proposition}
The usual Abel-Jacobi map
$$
AJ_1:\CH_1(\ov{X};\Q) \rar IJ(H^{2n-3}(\ov{X};\Q))\simeq Jac(C)\otimes \Q
$$
is surjective.
\end{proposition}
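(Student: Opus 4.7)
The plan is to reduce the assertion to the surjectivity of the Abel--Jacobi map on codimension two cycles of $\ov{X}$ via Hard Lefschetz, and then deduce this from Proposition~\ref{isoPHS} by restricting to the stable locus and performing a lifting. Since $\ov{X}$ is smooth projective of dimension $n$, the Hard Lefschetz theorem provides an isomorphism of pure Hodge structures $\cup\cL^{n-3}: H^3(\ov{X},\Q)\xrightarrow{\sim} H^{2n-3}(\ov{X},\Q)(n-3)$, and combined with \eqref{Kiso} this yields $IJ(H^{2n-3}(\ov{X},\Q))\simeq IJ(H^3(\ov{X},\Q))\simeq Jac(C)\otimes\Q$. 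The cap product $\cap\cL^{n-3}:\CH^2(\ov{X};\Q)_{\hom}\to \CH_1(\ov{X};\Q)_{\hom}$ is compatible with $\cup\cL^{n-3}$ via the Deligne cycle class map, exactly as in the proof of Lemma~\ref{commute}, so it is enough to show that $AJ^2:\CH^2(\ov{X};\Q)_{\hom}\to IJ(H^3(\ov{X},\Q))$ is surjective.

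Let $j:U=\cSU_C^s(2,\cO_C)\hookrightarrow \ov{X}$ denote the open inclusion with complement the normal crossing divisor $\ov{Y}$. Both $H^3(\ov{X},\Q)$ and $H^3(U,\Q)$ are pure of weight three -- the former by the blow-up formula applied to $\ov{X}\to \tilde{S}$ together with Balaji's computation of $H^3(\tilde{S})$, and the latter by \eqref{HS} and \eqref{ar-sa}. A weight argument (weights of $H^k_{\ov{Y}}(\ov{X})$ are at least $k$) forces $j^*$ to be an isomorphism on $H^3$, inducing an isomorphism $IJ^2(\ov{X})\xrightarrow{\sim} IJ^2(U)\simeq Jac(C)\otimes\Q$ compatible with the Abel--Jacobi maps. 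Given $\alpha\in Jac(C)\otimes\Q$, Proposition~\ref{isoPHS} furnishes $\eta_U\in\CH^2(U;\Q)_{\hom}$ with $AJ^2_s(\eta_U)=\alpha$, which then lifts via the localization sequence
\[
\CH^1(\ov{Y};\Q)\to \CH^2(\ov{X};\Q)\xrightarrow{j^*}\CH^2(U;\Q)\to 0
\]
to some $\tilde\eta\in\CH^2(\ov{X};\Q)$. Subtracting a suitable algebraic cycle $\zeta$ supported on $\ov{Y}$ kills the cohomological obstruction and produces $\tilde\eta-\zeta\in\CH^2(\ov{X};\Q)_{\hom}$ still restricting to $\eta_U$; by the compatibility in the previous sentence its Abel--Jacobi image is $\alpha$, and capping with $\cL^{n-3}$ delivers a one-cycle whose image under $AJ_1$ is $\alpha$.

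The main obstacle is verifying that the class $cl(\tilde\eta)\in\ker(H^4(\ov{X},\Q)\to H^4(U,\Q))$ is genuinely represented by an algebraic cycle supported on $\ov{Y}$. This kernel is the image of the Gysin maps from the smooth components $Y_i\subset \ov{Y}$, i.e.\ the image of $\bigoplus_i \CH^1(Y_i;\Q)\to \CH^2(\ov{X};\Q)$ under the cycle class map, and by the Lefschetz $(1,1)$ theorem applied componentwise every element of this kernel is algebraic, so the required subtraction exists. An alternative route that bypasses this technicality is to factor through Seshadri's desingularization $\tilde{S}$ using the pushforward $\pi_*:\CH_1(\ov{X};\Q)\to \CH_1(\tilde{S};\Q)$, which intertwines the Abel--Jacobi maps because the blow-up formula makes $\pi_*$ an isomorphism on $IJ(H^{2n-3})$; the surjectivity problem then reduces to $\tilde{S}$, whose exceptional locus admits the explicit stratification recalled in \S\ref{Seshadri's} and is thus amenable to the same lifting argument.
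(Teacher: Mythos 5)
Your proof is correct, but it takes a genuinely different route at the key step. Both you and the paper begin identically: identify $IJ(H^{2n-3}(\ov{X},\Q))$ with $Jac(C)\otimes\Q$ via Hard Lefschetz and \eqref{Kiso}, and reduce the surjectivity of $AJ_1$ to that of $AJ^2$ on $\CH^2(\ov{X};\Q)_{\hom}$ using the compatibility of $\cap\cL^{n-3}$ with the Deligne cycle class map. Where you diverge is in establishing surjectivity of $AJ^2$. The paper observes that $\ov{X}$ is rationally chain connected (being birational to $\cSU_C(2,\cO_C)$ through \eqref{resKS}) and invokes Bloch--Srinivas \cite[Theorem 1]{BlochSrinivas} directly on the smooth projective variety $\ov{X}$ to get $\CH^2(\ov{X};\Q)_{\hom}\simeq IJ^2(\ov{X})\otimes\Q\simeq Jac(C)\otimes\Q$ in one stroke. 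You instead restrict to the stable locus $U$, quote Proposition \ref{isoPHS} for the surjectivity of $AJ^2_s$ there, and lift back to $\ov{X}$ via the localization sequence, correcting by a cycle supported on $\ov{Y}$ whose existence you justify by the weight argument (the kernel of $H^4(\ov{X})\to H^4(U)$ is spanned by Gysin images from the smooth components $Y_i$) together with semisimplicity of polarizable Hodge structures and Lefschetz $(1,1)$ on each $Y_i$. This works, and the lifting argument is a useful template in situations where one only controls cycles on the open part; but here it is a detour, since Proposition \ref{isoPHS} is itself proved via Bloch--Srinivas on $\cSU_C(r,\cO(-x))$, and the same theorem applies directly to $\ov{X}$. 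One small point to make explicit if you keep your route: the weight argument alone gives only surjectivity of $j^*$ on $H^3$; injectivity (hence the isomorphism of intermediate Jacobians you use) requires the dimension count $\dim H^3(\ov{X},\Q)=\dim H^3(U,\Q)=2g$ coming from the identifications with $H^1(C,\Q)(-1)$, which you gesture at but should state.
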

\begin{proof}
Using Hard Lefschetz and \eqref{Kiso}, we first note that
$$
IJ(H^{2n-3}(\ov{X};\Q))\simeq Jac(C)\otimes \Q.
$$
Since the maps in \eqref{resKS} are birational morphisms and $\cSU_C(2,\cO_C)$ is rationally chain connected (see \S \ref{last}, for more details), it follows that $\tilde{S}$ and $\ov{X}$ are also rationally chain connected. 
Hence we can use the result \cite[Theorem 1]{BlochSrinivas}, to obtain an isomorphism
$$
\CH^2(\ov{X};\Q)\simeq Jac(C)\otimes \Q.
$$
Using the compatibility of the Lefschetz operator, we have a commutative diagram:
\begin{eqnarray*}
\CH^2(\ov{X};\Q) & \sta{L^{n-3}}{\rar} & \CH_1(\ov{X};\Q) \\
\downarrow \simeq &                   & \downarrow AJ_1 \\
Jac(C)\otimes \Q & \sta{L^{n-3}}{\rar} & Jac(C)\otimes \Q
\end{eqnarray*}
The Lefschetz operator on $Jac(C)\otimes \Q$ is an isomorphism.
This gives the surjectivity of the Abel-Jacobi map $AJ_1$.

\end{proof}

%%%%%%%%%%%%%%%%%%%%%%%%%%%%%%%%%%%%%%%%%%%%%%%%%%%%%%%%%%%%%%%%%%%%%%%%%%%%%%%%%%%%%%%%%%%%%%%%%%%%%%%%%%%%%%%%%%%%%%%%%%%%%%

\section{Chow generation for one cycles on $\cSU_C(r,\cO_C)$}\label{last}
%%%%%%%%%%%%%%%%%%%%%%%%%%%%%%%%%%%%%%%%%%%%%%%%%%%%%%%%%%%%%%%%%%%%%%%%%%%%%%%%%%%%%%%%%%%%%%%%%%%%%%%%%%%%%%%%%%%%%%%%%%%%%%

A study of Fano manifolds with Picard number one with respect to the geometry of the variety of tangent directions
to the minimal rational curves, has been studied by J-M. Hwang and N. Mok in a series of papers (see \cite{Hw-Mo} for a survey). The moduli space $\cSU_C^s(2,\cO_C)$ is a Fano manifold with Picard number one and the minimal rational curves are the `Hecke curves' introduced by Ramanan and Narasimhan \cite{Na-Ra2}. Suppose $\cL$ is the ample generator of $\cSU_C(2,\cO_C)$ then the dualizing class $K$ is equal to $-4\cL$ (\cite{Beauville}). Furthermore, a Hecke curve has degree $4$ with respect to $-K$. Hence it has degree one with respect to $\cL$ (see also \cite{Hw2}).
In this section, we include the results of J-M. Hwang, which more generally show that the Hecke curves generate the Chow group of one cycles on the moduli space $\cSU_C(r,\cO_C)$.
 
With notations as in the previous section or \cite{Ar-Sa}, there is a fibration
\begin{eqnarray*}
\cP\,\, & \sta{f}{\rar} \cSU_C(r,\cO_C)\\
\downarrow\!\!\pi && \\
\cS\,\,\, & & \\
\downarrow \!\!\psi && \\
C \,\,\,\\
\end{eqnarray*}
such that the fibre of the morphism $\psi$ at a point $x\in C$ is the moduli space $\cSU_C(r,\cO(-x))$. The variety
$\cP$ is a $\p^{r-1}$-bundle over $\cS$ and restricting over a fibre $\psi^{-1}(x)$ gives precisely the Hecke correspondence used in the previous section. The image under $f$ of the lines in the fibres of the projection $\pi$ are the Hecke curves on $\cSU_C(r,\cO_C)$.

Firstly, we look at a variant of a theorem of Kollar \cite[Proposition 3.13.3]{Ko}, on the Chow generation of one cycles on a variety:

\begin{proposition}\label{generator}(Hwang) 
Let $X$ be a normal projective variety.
Let $\cM \subset {\rm Chow}(X)$ be a closed subscheme of the Chow
scheme of $X$ such that all  members of $\cM$ are irreducible and
reduced curves on $X$. For a general point $x \in X$, let $\cC_x
\subset \p T_x(X)$ be the closure of the union of the tangent
vectors to members of $\cM$ passing through $x$, which are smooth
at $x$. Suppose that $\cC_x$ is non-degenerate (i.e. not contained in any hyperplane) in $\p T_x(X)$.
Then the Chow group of 1-cycles ${\rm CH}_1(X;{\Q})$ is generated
by members of $\cM$. 
\end{proposition}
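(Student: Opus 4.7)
Let $V \subseteq \CH_1(X;\Q)$ denote the $\Q$-subspace spanned by the classes $[M]$ for $M \in \mathcal{M}$; the goal is $V = \CH_1(X;\Q)$. Introduce the universal incidence $I \subset X \times \mathcal{M}$ with projections $p : I \to X$ and $q : I \to \mathcal{M}$, so that $q^{-1}([M]) = M$ and $p^{-1}(x)$ parametrizes members of $\mathcal{M}$ passing through $x$ and smooth at $x$. The non-degeneracy hypothesis then reads: for a general $x \in X$, the closure $\mathcal{C}_x$ of the image of the tangent map $p^{-1}(x) \to \p T_x(X)$ linearly spans $\p T_x(X)$.

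\textbf{Strategy.} The plan is to adapt Koll\'ar's argument in \cite[Proposition~3.13.3]{Ko}, originally phrased for a family of rational curves and the group $N_1(X)$, to the present setting of an arbitrary family of curves and the full rational Chow group. First note that since $\mathcal{C}_x \neq \emptyset$ at a general $x$, the projection $p$ is dominant and, by properness of $X$, surjective, so $X$ is set-theoretically swept out by members of $\mathcal{M}$. It then suffices to show that $[C] \in V$ for every reduced irreducible curve $C \subset X$.

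\textbf{Core step.} Fix such a $C$ and a general point $x \in C$. By the non-degeneracy of $\mathcal{C}_x$, we may select finitely many members $M_1, \dots, M_r \in \mathcal{M}$ through $x$, each smooth at $x$, whose tangent directions span $T_x(X)$ and such that $T_x(C)$ lies in their linear span. The technical heart is then to construct a flat family of effective $1$-cycles on $X$ over a connected base $T$ with one fiber equal to $C$ and some other fiber supported on a union of members of $\mathcal{M}$ (hence lying in $V$). Flatness of such a family implies equality of the two cycle classes in $\CH_1(X;\Q)$, which gives $[C] \in V$.

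\textbf{Main obstacle.} The decisive difficulty is the construction of the degenerating family. Koll\'ar's original argument uses bend-and-break, which relies on rationality of the curves and is not available here. The proposed substitute is a dimension count on the Chow scheme: one must show that, as $x'$ moves along $C$, the locus in $\mathcal{M}^{\times r}$ of tuples $(M_1', \dots, M_r')$ passing through $x'$ and satisfying the incidence conditions forced by $T_{x'}(C)$ is non-empty (which is precisely what non-degeneracy of $\mathcal{C}_{x'}$ provides) and of large enough dimension to sweep out a surface in $X$ containing $C$ and ruled by members of $\mathcal{M}$. Reducing to a surface, a standard moving/specialization argument then exhibits $C$ as a specialization of a reducible cycle whose components all belong to $\mathcal{M}$, and the required rational equivalence follows. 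Verifying these dimension estimates and controlling the special fibers to ensure no extraneous components appear is the main technical work.
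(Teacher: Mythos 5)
Your proposal has a genuine gap at precisely the point you flag as ``the main technical work'': the construction of the degenerating family, and the rational equivalence it is supposed to produce, are never carried out --- and that construction is the entire content of the statement. Moreover, the mechanism you propose would not suffice even if the family existed: two fibers of a flat family of cycles over a connected base $T$ are in general only \emph{algebraically} equivalent, not rationally equivalent, so your assertion that ``flatness of such a family implies equality of the two cycle classes in $\CH_1(X;\Q)$'' is false unless $T$ is $\p^1$ or the two points of $T$ are joined by rational curves, and nothing in your dimension-count sketch produces that. Your use of the non-degeneracy hypothesis is also off target: you invoke it only to choose finitely many members through one point $x$ whose tangents span $T_x(X)$, whereas its real role is global --- to rule out that all members of $\cM$ lie in the fibers of a nontrivial fibration.

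The paper's proof avoids all of this by reducing to two black boxes from Koll\'ar \cite{Ko}. Theorem IV.4.13 of \cite{Ko} produces an open subset $X^o\subset X$ and a proper morphism $\pi:X^o\to Z^o$ with connected fibers such that general points of a fiber are joined by $\cM$-chains of length at most $\dim X$ and every member of $\cM$ through a point of a fiber stays in its closure; if $Z^o$ were positive-dimensional, the tangent directions at a general $x$ to members of $\cM$ through $x$ would lie in the proper subspace $T_x(\pi^{-1}(\pi(x)))$, hence in a hyperplane, contradicting non-degeneracy of $\cC_x$. So $Z^o$ is a point, any two general points of $X$ are joined by $\cM$-chains, properness of the universal family makes $u^{(2)}:U\times_Y U\to X\times X$ surjective, and Proposition 3.13.3 of \cite{Ko} is exactly the statement that this surjectivity yields generation of $\CH_1(X;\Q)$ by the members of the family. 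The rational-equivalence production you are missing is contained in that proposition; if you want a self-contained argument you would have to reprove it, which is substantially more than a dimension count on the Chow scheme.
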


\begin{proof} Applying Theorem IV.4.13 of \cite{Ko}, we have an
open subvariety $X^o \subset X$ and a morphism $\pi: X^o \to Z^o$
with connected fibers such that for every $z \in Z^o$, any two
general points of  $\pi^{-1}(z)$ can be connected by a connected
chain consisting of   members of $\cM$ of length at most $\dim X$
and all members of $\cM$ through a  point $x \in \pi^{-1}(z)$ are
contained in the closure of $\pi^{-1}(z)$. By the assumption on
the non-degeneracy of $\cC_x$ in $\p T_x(X)$, this implies that
$Z^o$ is a point. It follows that any two general points of $X$
can be connected by a chain consisting of members of $\cM$ of length at
most $\dim X$. Let $Y \subset {\rm Chow}(X)$ be the closed
subscheme parametrizing connected 1-cycles each component of which
is a member of $\cM$. In the notation of Lemma IV.3.4 of
\cite{Ko}, $u^{(2)}: U \times_Y U \to X \times X$ is dominant,
where $g: U \to Y$ is the universal family and $u:U \to X$ is the
cycle map. Since $U$ is complete, $u^{(2)}$ is surjective as in
the proof of Corollary IV.3.5 of \cite{Ko}. Thus we can apply
Proposition 3.13.3 of \cite{Ko} to conclude that ${\rm
CH}_1(X;{\Q})$ is generated by members of $\cM$.
\end{proof}

This proposition can now be used to obtain the Chow generation for one cycles on the moduli space $\cSU_C(r,\cO_C)$.

\begin{corollary}\label{Hecke}(Hwang)
Let $X = \cSU_C(r, \cO_C)$ for a curve $C$ of genus $\geq 4$.
 Then ${\rm CH}_1(X;\Q)$ is generated by
Hecke curves. Moreover, there is a surjection 
$$
\CH_0(C;\Q) \lrar \CH_1(\cSU_C(r,\cO_C); \Q).
$$
\end{corollary}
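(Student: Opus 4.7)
The plan is to apply Proposition \ref{generator} with $\cM$ being the family of Hecke curves on $X=\cSU_C(r,\cO_C)$, and then argue that the assignment ``Hecke curve through $(x,V)$'' factors through the point $x\in C$ up to rational equivalence.

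First I would recall, as already described in the fibration
\[
\cP \xrightarrow{f} \cSU_C(r,\cO_C), \qquad \pi\colon \cP\to \cS, \qquad \psi\colon \cS\to C,
\]
that a Hecke curve associated with $x\in C$ and a quasi-parabolic structure $V_x^\ast \twoheadrightarrow \ell$ is, by construction, the $f$-image of a line in the $\mathbb{P}^{r-1}$-fiber $\pi^{-1}([V])\simeq \mathbb{P}(V_x^\ast)$. In particular the Hecke curves form an irreducible flat family $\cM\subset \mathrm{Chow}(X)$ parametrized by an irreducible variety mapping onto $C$. So the setup of Proposition \ref{generator} is available.

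The main step is to verify the non-degeneracy hypothesis: for a general stable bundle $[V]\in \cSU_C^s(r,\cO_C)$, the closure $\cC_{[V]}\subset \mathbb{P}T_{[V]}X$ of the tangent directions of Hecke curves smooth at $[V]$ should not lie in any hyperplane. The tangent directions to the Hecke curves through $[V]$ are indexed by the incidence variety $\{(x,\ell)\,:\,x\in C,\ \ell\subset V_x^\ast\text{ a line}\}$, which is a $\mathbb{P}^{r-1}$-bundle over $C$. Via the Kodaira--Spencer identification $T_{[V]}X\simeq H^1(C,\mathrm{ad}(V))$, this produces an embedding of a ruled variety of dimension $r$ into $\mathbb{P}H^1(C,\mathrm{ad}(V))$, and its non-degeneracy is exactly the statement on the variety of minimal rational tangents at $[V]$ worked out by Hwang in his study of $\cSU_C(r,\cO_C)$ as a Fano manifold of Picard number one. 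I would invoke that result as the essential geometric input; this is the step I expect to be the main obstacle, since it depends on the genus assumption $g\ge 4$ via non-degeneracy of the Hecke embedding. Granted this, Proposition \ref{generator} immediately gives that $\CH_1(X;\Q)$ is generated by Hecke curves.

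For the final surjection $\CH_0(C;\Q)\twoheadrightarrow \CH_1(X;\Q)$, I would argue that all Hecke curves over a fixed $x\in C$ are rationally equivalent in $X$. Indeed, the Hecke curves over $x$ are parametrized by the projective bundle $\mathbb{P}(\cP_x)\to \cSU_C(r,\cO(-x))$, and since $\cSU_C(r,\cO(-x))$ is a smooth unirational (hence rationally connected) variety with $\CH_0\otimes\Q=\Q$ and the parameter space is a projective bundle over it, $\CH_0$ of the parameter space tensored with $\Q$ is again $\Q$. Hence the cycle-class map sends every Hecke curve over $x$ to the same element $\gamma(x)\in \CH_1(X;\Q)$, giving a well-defined map $C\to \CH_1(X;\Q)$, $x\mapsto \gamma(x)$. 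Extending $\Q$-linearly gives $\CH_0(C;\Q)\to \CH_1(X;\Q)$, which is surjective by the Chow generation result of the previous paragraph. This completes the proposed proof.
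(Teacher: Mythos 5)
Your proposal is correct and follows essentially the same route as the paper: Chow generation via Proposition \ref{generator} with non-degeneracy supplied by Hwang's theorem on tangent vectors to Hecke curves, and then the surjection from $\CH_0(C;\Q)$ obtained because the parameter space of Hecke curves over a fixed $x\in C$ lies over the rationally connected variety $\cSU_C(r,\cO(-x))$, so all such curves are rationally equivalent. The only cosmetic difference is that for $r>2$ the paper parametrizes Hecke curves by the Grassmannian bundle of lines in the $\p^{r-1}$-bundle rather than by $\p(\cP_x)$ itself, but this does not affect the argument since that parameter space is still rationally connected over $C$.
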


\begin{proof} By Theorem 3 of \cite{Hw}, the union of the tangent
vectors to Hecke curves through a general point $y \in X$ is a
non-degenerate subvariety $\cC_y\subset \p T_y(X)$. Since Hecke
curves have degree one with respect to the generator of ${\rm
Pic}(X)$, they form a closed subscheme $\cM$ in ${\rm Chow}(X)$.
Thus Proposition \ref{generator} applies. This implies that there is a surjection
$$
\CH_0(\cS; \Q) \rar \CH_1(\cSU_C(r,\cO_C); \Q).
$$
This map is obtained as follows: first consider the case when $r=2$.
There is a map $\pi^*:\CH^l(\cS) \rar CH^l(\cP)$, where $l:=\m{dim }\cS$, which is the same as the map $\pi^*:\CH_0(\cS)\rar \CH_1(\cP)$ (since $\pi:\cP\rar \cS$ is a $\p^1$-bundle). Now consider the composed map
$$
\CH_0(\cS;\Q) \sta{\pi^*}{\rar} \CH_1(\cP;\Q) \sta{f_*}{\rar} \CH_1(\cSU_C(r,\cO_C); \Q).
$$
When $r>2$, we consider the Grassmanian bundle $\rho:\cG(1,r)\rar \cS$ of lines in $\cP$ and $\pi':\cP'\rar \cG(1,r)$ be the universal line. Denote the projection
$f': \cP'\rar \cSU_C(r,\cO_C)$. Since any fibre of
$\rho$ is a rationally connected variety, it follows that $\CH_0(\cG(1,r);\Q)\simeq \CH_0(\cS;\Q)$. Consider the composed map
$$
\CH_0(\cS;\Q)\simeq \CH_0(\cG(1,r); \Q)  \sta{{\pi'}^*}{\rar} \CH_1(\cP';\Q) \sta{f_*}{\rar} \CH_1(\cSU_C(r,\cO_C); \Q).
$$

We have now shown the surjectivity of this map.

For any $x\in C$, the moduli space $\cSU_C(r,\cO(-x))$ is rationally connected and we have the triviality
$\CH_0(\cSU_C(r,\cO(-x));\Q)\,=\,\Q$. This gives us an isomorphism
$$
\CH_0(C;\Q) \simeq CH_0(\cS,\Q).
$$ 
Altogether, we now get a surjection
$$
\CH_0(C;\Q) \lrar CH_1(\cSU_C(r,\cO_C); \Q).
$$

\end{proof}

%%%%%%%%%%%%%%%%%%%%%%%%%%%%%%%%%%%%%%%%%%%%%%%%%%%%%%%%%%%%%%%%%%%%%%%%%%%%%%%%%%%%%%%%%%%%%%%%%%%%%%%%%%%%%%%%
\section{Chow generation for one cycles on Kirwan's log resolution and the Abel-Jacobi isomorphism, when $r=2$}

%%%%%%%%%%%%%%%%%%%%%%%%%%%%%%%%%%%%%%%%%%%%%%%%%%%%%%%%%%%%%%%%%%%%%%%%%%%%%%%%%%%%%%%%%%%%%%%%%%%%%%%%%%%%%%%%

We now restrict our attention to the case when the rank $r=2$. 

Recall Kirwan's desingularisation, from \S \ref{KirwansLR}:
$$
h: \ov{X}\rar \cSU_C(2,\cO_C)
$$
such that $\ov{Y}\subset \ov{X}$ is a normal crossing divisor in the smooth projective variety $\ov{X}$.

From the previous subsection, we know that the Hecke curves are the minimal degree rational curves on the moduli space $\cSU_C(2,\cO_C)$. Since the morphism $h$ is a birational morphism, Hecke curves pass through a general point of the resolution $\ov{X}$ as well. Let $\cH\subset \mbox{Chow}(\ov{X})$ be the closed subscheme of the Chow variety of $\ov{X}$ whose general member is the Hecke curve in $\ov{X}$. Let $\cP'\rar \cH$ be the universal proper family of rational curves, whose general fibre is irreducible.

Notice that, given a point $x\in C$, there is a Hecke correspondence
\begin{eqnarray*}
\cP_x\,\, & \sta{f}{\rar} \cSU_C(r,\cO_C)\\
\downarrow\!\!\pi && \\
\cSU_C(2,\cO(-x))\,\,\, & & 
\end{eqnarray*}
such that the images of the lines in the $\p^1$-bundle $\cP_x$ are precisely the Hecke curves. This implies that
the closed subscheme $\cH$ can be written as 
$$
\cH\,=\,\cup_{x\in C}\cH_x.
$$
Here $\cH_x\subset \mbox{Chow}(\ov{X})$ is the closed subscheme whose general member is a Hecke curve which is the image of lines of the $\p^1$-bundle $\cP_x$.

This gives a birational map:
$$
\zeta_x: \cH_x \dasharrow  \cSU_C(2,\cO(-x)).
$$

Consider a blow-up of this rational map, to get a proper birational morphism:
$$
\tilde{\eta_x}:\tilde{\cH_x}\rar \cSU_C(2,\cO(-x)).
$$
Since $\cSU_C(2,\cO(-x))$ is a rationally connected variety, the variety $\cH_x$ and hence $\tilde{\cH_x}$  are rationally chain connected varieties, for each $x\in C$. Consider the family $\tilde{\cH}=\bigcup_{x\in C}\tilde{\cH_x}\rar C$. Then we obtain an isomorphism of rational Chow group of $0$-cycles:
\begin{equation}\label{chowiso}
\CH_0(C;\Q) \simeq \CH_0(\tilde\cH;\Q).
\end{equation}

Consider the universal proper family of  rational curves $\tilde\eta:\tilde{\cP}:=\cP\times_{\cS}\tilde{\cH}\rar \tilde{\cH}$ which is the pullback of the $\p^1$ bundle via the above morphism $\zeta:=\cup_{x\in C}\zeta_x:\tilde{\cH}\rar \cS$.  Note that due to properness, the family $\tilde{\cP}\rar \tilde{\cH}$ maps surjectively onto the resolution $\ov{X}$. Indeed the projection onto $\ov{X}$, factors via the family $\cP'\rar \cH$ and there is a commutative diagram:
\begin{eqnarray*}
\tilde{\cP} & \rar \cP' &\rar \ov{X}\\
\downarrow \tilde{\eta} &  \,\, \downarrow & \\
\tilde{\cH} & \rar \cH. & \\
\end{eqnarray*}

 In other words, the map
$$
f':\tilde{\cP}\rar \ov{X}
$$
is a proper and surjective morphism. Furthermore, all the fibres of $\tilde{\cP}\rar \tilde{H}$ are irreducible.

\begin{lemma}\label{genKirwan}
The (irreducible) members of the family $\tilde{\cP}\rar \tilde{\cH}$ generate the Chow group $\CH_1(\ov{X}; \Q)$.
\end{lemma}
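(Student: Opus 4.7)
The plan is to apply Proposition \ref{generator} directly to the smooth projective variety $\ov{X}$, taking $\cM \subset \mbox{Chow}(\ov{X})$ to be the closed subscheme whose general member is a Hecke curve (more precisely, the closure of the locus parametrizing the irreducible lifts of Hecke curves from $\cSU_C(2,\cO_C)$). The strategy is essentially to transfer Hwang's verification (used in Corollary \ref{Hecke}) of the hypotheses of Proposition \ref{generator} from $\cSU_C(2,\cO_C)$ to the resolution $\ov{X}$, and then to observe that the output family $\tilde{\cP}\rar\tilde{\cH}$ is, up to blow-up on the base, the universal family parametrizing these curves.

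First I would verify that members of $\cM$ pass through a general point of $\ov{X}$. Since Kirwan's resolution $h:\ov{X}\rar\cSU_C(2,\cO_C)$ is an isomorphism over the stable locus $\cSU_C^s(2,\cO_C)$, and the unstable (singular) locus $K(C)$ has codimension $\geq 2$ in $\cSU_C(2,\cO_C)$, a general Hecke curve misses the singular locus and therefore lifts isomorphically to $\ov{X}$, passing through any general point $y\in\ov{X}$ by Hwang's result. Second, I would verify non-degeneracy of $\cC_y\subset\PP T_y\ov{X}$. At a general $y$, the differential $dh_y:T_y\ov{X}\sta{\simeq}{\rar}T_{h(y)}\cSU_C(2,\cO_C)$ is an isomorphism and identifies the tangent variety of Hecke curves at $y$ with the one at $h(y)$. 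The latter is non-degenerate by Theorem~3 of \cite{Hw}, as invoked in Corollary \ref{Hecke}, so $\cC_y$ is non-degenerate in $\PP T_y\ov{X}$ as well.

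With both hypotheses verified, Proposition \ref{generator} gives that $\CH_1(\ov{X};\Q)$ is generated by the members of $\cM$, i.e.\ by classes of the (lifts of) Hecke curves. To rephrase this in terms of the family $\tilde{\cP}\rar\tilde{\cH}$, I would use the commutative diagram constructed just before the lemma,
\[
\begin{matrix}
\tilde{\cP} & \rar \cP' & \rar \ov{X}\\
\downarrow\tilde\eta & \,\,\downarrow & \\
\tilde{\cH} & \rar \cH &
\end{matrix}
\]
together with the fact that $\tilde{\cP}\rar\tilde{\cH}$ has irreducible fibres whose $f'$-images exhaust the members of $\cM$. Any cycle class produced by $\cM$ is therefore also the class of a member of $\tilde{\cP}\rar\tilde{\cH}$, yielding the claim.

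The subtle point, which I expect to be the main obstacle, is the presence of $\ov{Y}$-supported $1$-cycles in $\CH_1(\ov{X};\Q)$: a general Hecke curve avoids $\ov{Y}$, so it is not obvious that such cycles are captured. This is handled abstractly inside the proof of Kollar's Proposition 3.13.3 of \cite{Ko} (quoted in Proposition \ref{generator}): the surjectivity of $u^{(2)}:U\times_\cM U\rar\ov{X}\times\ov{X}$ produces, via a Bloch--Srinivas-style correspondence argument, a rational equivalence between an arbitrary $1$-cycle on $\ov{X}$ and a combination of members of $\cM$, regardless of whether the given cycle is contained in $\ov{Y}$. Thus the hard work is entirely absorbed into the invocation of Proposition \ref{generator}, and no separate argument to account for curves lying on components of $\ov{Y}$ is needed once the non-degeneracy of $\cC_y$ is confirmed for $\ov{X}$.
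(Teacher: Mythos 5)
Your proposal is correct, and it reaches the conclusion through the same engine as the paper (Proposition \ref{generator}, i.e.\ Koll\'ar IV.3.13.3), but it enters that engine through a different door. You verify the stated hypothesis of Proposition \ref{generator} on $\ov{X}$ itself: general Hecke curves avoid the singular locus (codimension $\geq 3$ for $g\geq 3$), hence lift isomorphically, and the non-degeneracy of the variety of tangent directions $\cC_y\subset \p T_y\ov{X}$ is transported through the isomorphism $dh_y$ from Hwang's Theorem 3 on $\cSU_C(2,\cO_C)$. The paper never re-examines tangent directions on the blow-up; instead it jumps to the intermediate output of that hypothesis, namely that any two general points of $\ov{X}$ are joined by Hecke chains (inherited from $\cSU_C(2,\cO_C)$ via birationality), upgrades this to all points by properness, records the surjectivity of $\tilde{\cP}\times_{\tilde{\cH}}\tilde{\cP}\rar\ov{X}\times\ov{X}$, and then cites Koll\'ar's Proposition 3.13.3 directly. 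The two routes are logically equivalent since the non-degeneracy in Proposition \ref{generator} is used only to force $Z^o$ to be a point, i.e.\ to produce exactly the chain-connectedness the paper asserts directly; your version is more faithful to the letter of Proposition \ref{generator}, the paper's is shorter. Your closing paragraph is also the right reading of where the $\ov{Y}$-supported cycles are absorbed: they are handled inside Koll\'ar's correspondence argument once $u^{(2)}$ is surjective, and this is why the paper takes care to define the family as the pullback $\p^1$-bundle $\tilde{\cP}=\cP\times_{\cS}\tilde{\cH}$, so that \emph{every} member (not just the general one) is irreducible — a point you should make explicit when you define $\cM$ as a closure, since a priori a closure in $\mbox{Chow}(\ov{X})$ of strict transforms could acquire reducible limit cycles, violating the hypothesis of Proposition \ref{generator}.
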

\begin{proof}
Since the map $h:\ov{X}\rar \cSU_C(2,\cO_C)$ is a birational morphism,  $\ov{X}$ is also a rationally connected smooth proper variety. Furthermore, any two general points are connected by a chain of Hecke curves (since the same is true on $\cSU_C(2,\cO_C)$). By properness, any two points of $\ov{X}$ are connected by a chain of rational curves, which are parametiesd by $\tilde{\cH}$. Also note that $\tilde{\cP}\rar \tilde{\cH}$ is a family of (irreducible) rational curves and we note  that the projection $\tilde{\cP}\times_{\tilde{\cH}}\tilde{\cP} \rar \ov{X} \times \ov{X}$ is surjective. 

Applying Proposition \ref{generator} or Theorem IV.4.13 of \cite{Ko}, we deduce that the (irreducible) members of the covering family $\tilde{\cP}\rar \tilde{\cH}$ generate the Chow group $\CH_1(\ov{X}, \Q)$.

\end{proof}

\begin{corollary}\label{surjSigma}
There is a surjection
$$
\CH_0(C,\Q)\rar \CH_1(\ov{X};\Q).
$$
\end{corollary}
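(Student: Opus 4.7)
My plan is to imitate the proof of Corollary \ref{Hecke}, now working on $\ov{X}$ rather than on $\cSU_C(2,\cO_C)$, and using the universal family $\tilde{\cP}\to\tilde{\cH}$ that was already assembled in the preceding pages. The idea is to exhibit a composition
\[
\CH_0(C;\Q)\xrightarrow{\ \sim\ }\CH_0(\tilde{\cH};\Q)\xrightarrow{\ \tilde{\eta}^{*}\ }\CH_1(\tilde{\cP};\Q)\xrightarrow{\ f'_{*}\ }\CH_1(\ov{X};\Q),
\]
and then check each arrow is surjective (in fact the first two are isomorphisms up to a flat pullback, and the third is surjective by the Chow generation lemma).

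First I would invoke \eqref{chowiso}, which gives the isomorphism $\CH_0(C;\Q)\simeq\CH_0(\tilde{\cH};\Q)$; this uses that each fibre of $\tilde{\cH}\to C$ is rationally chain connected (the $\tilde{\cH}_x$ are blow-ups of the rationally connected moduli spaces $\cSU_C(2,\cO(-x))$). Next, because $\tilde{\cP}=\cP\times_{\cS}\tilde{\cH}$ is the base change of the $\p^{1}$-bundle $\pi:\cP\to\cS$ to $\tilde{\cH}$, the projection $\tilde{\eta}:\tilde{\cP}\to\tilde{\cH}$ is a $\p^{1}$-bundle, and in particular flat, so flat pullback yields a well-defined map $\tilde{\eta}^{*}:\CH_0(\tilde{\cH};\Q)\to\CH_1(\tilde{\cP};\Q)$; moreover this pullback is an isomorphism since on a $\p^{1}$-bundle, $\CH_{1}$ of the total space is generated by $\tilde{\eta}^{*}\CH_{0}$ of the base together with $c_1(\cO(1))\cdot\tilde{\eta}^{*}\CH_{1}$, and passing to homological degree $1$ with base of dimension equal to the relative dimension appropriately leaves only the former contribution (alternatively, since we need only surjectivity up to the next step, this is enough).

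Then I would push forward along $f':\tilde{\cP}\to\ov{X}$, which is proper and surjective by construction. On the level of $1$-cycles, $f'_{*}\tilde{\eta}^{*}[c]$ is, for a general point $c\in\tilde{\cH}$, an integer multiple of the irreducible rational curve $f'(\tilde{\cP}_{c})\subset\ov{X}$, i.e.\ a member of the covering family from Lemma \ref{genKirwan}. That lemma then says precisely that the images of $f'_{*}\tilde{\eta}^{*}$ generate $\CH_{1}(\ov{X};\Q)$, so the composite is surjective.

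The only real subtlety I expect is checking that the cycle-theoretic map $f'_{*}\tilde{\eta}^{*}$ really does recover the Hecke curves used in Lemma \ref{genKirwan} (and not some degenerate cycle obtained only on a proper subvariety of $\tilde{\cH}$). This is handled because $f'$ restricted to each fibre of $\tilde{\eta}$ is either birational onto its image or constant; the locus where it is constant is a proper closed subset of $\tilde{\cH}$, so on a general point we recover a positive multiple of the corresponding Hecke curve, and then rational equivalence on $\tilde{\cH}$ lets us replace arbitrary $0$-cycles by ones supported in the generic locus. This is the same mechanism used in the proof of Corollary \ref{Hecke}, so no new input is needed.
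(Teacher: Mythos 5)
Your proposal is correct and follows essentially the same route as the paper: the paper's proof is exactly the composite $\CH_0(C;\Q)\simeq \CH_0(\tilde{\cH};\Q)\xrightarrow{\tilde{\eta}^*}\CH_1(\tilde{\cP};\Q)\xrightarrow{f_*}\CH_1(\ov{X};\Q)$, with surjectivity coming from \eqref{chowiso} and Lemma \ref{genKirwan}. Your extra remarks on the $\p^1$-bundle structure and on the fibres possibly contracting are harmless elaborations of what the paper leaves implicit, and you correctly note that only surjectivity (not the full projective bundle decomposition) is needed.
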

\begin{proof}
Using Lemma \ref{genKirwan} we know that the irreducible fibres of $\tilde{\cP}\rar \tilde{\cH}$ generate $\CH_1(\ov{X};\Q)$.  Together with \eqref{chowiso}, we deduce that the composed map 
$$
\CH_0(C;\Q)\simeq \CH_0(\tilde{\cH};\Q)\sta{\tilde{\eta}^*}{\rar} \CH_1(\tilde{\cP};\Q)\sta{f_*}{\rar}\CH_1(\ov{X};\Q)
$$
is a surjection.
\end{proof}  

From \S \ref{compactAJsurj}, we have a surjective map
\begin{equation}\label{AJSigma}
\CH^{n-1}_{\hom}(\ol{X};\QQ)
\rightarrow
J\big(H^{2n-3}(\ov{X},\QQ)\big) \simeq Jac(C)\otimes \Q.
\end{equation}

\begin{corollary}\label{motive}
The surjective Abel-Jacobi map 
$$
AJ_1:\CH_1(\ov{X};\Q)_{\hom} \lrar Jac(C)\otimes \Q
$$
is an isomorphism, whenever  $g\geq 4$ .
\end{corollary}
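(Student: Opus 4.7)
The plan is to combine the surjection $\phi:\CH_0(C;\Q)\thrar \CH_1(\ov{X};\Q)$ from Corollary \ref{surjSigma} with the surjective Abel-Jacobi map \eqref{AJSigma}, and to conclude through a natural commutative square of correspondences.

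First, I would verify that $\phi$ restricts to a surjection on null-homologous cycles
$$
\phi_{\hom}:\CH_0(C;\Q)_{\hom}\thrar \CH_1(\ov{X};\Q)_{\hom}.
$$
Since $C$ is connected and the family $\tilde{\cP}\rar \tilde{\cH}\rar C$ is proper, the cohomology class $[H_x]\in H^{2n-2}(\ov{X},\Q)$ of the Hecke curve through $x\in C$ is independent of $x$ and nonzero, as it pairs positively with $\cL^{n-1}$. Therefore $\phi(\xi)$ is cohomologous to $\deg(\xi)\cdot[H_{x_0}]$, which forces $\phi(\xi)\in\CH_1(\ov{X};\Q)_{\hom}$ exactly when $\deg(\xi)=0$, giving the surjection on null-homologous parts.

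Next, using the identification $IJ(H^{2n-3}(\ov{X},\Q))\simeq Jac(C)\otimes\Q$ afforded by \eqref{Kiso} together with Hard Lefschetz on $\ov{X}$, I would place $\phi_{\hom}$ in the commutative diagram
$$
\begin{array}{ccc}
\CH_0(C;\Q)_{\hom} & \stackrel{\phi_{\hom}}{\lrar} & \CH_1(\ov{X};\Q)_{\hom} \\
\downarrow AJ_C &  & \downarrow AJ_1 \\
Jac(C)\otimes\Q & \stackrel{\Gamma^*}{\lrar} & Jac(C)\otimes\Q,
\end{array}
$$
where $\Gamma^*$ is the map on intermediate Jacobians induced by the correspondence class of the Hecke family in $C\times\ov{X}$, and $AJ_C$ is the classical Abel-Jacobi isomorphism for $C$ tensored with $\Q$. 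Commutativity is the standard compatibility of the Deligne cycle class map with correspondences, exactly as in Lemma \ref{commute}. Since $\phi_{\hom}$ and $AJ_1$ are surjective while $AJ_C$ is an isomorphism, $\Gamma^*$ is a surjective endomorphism of $Jac(C)\otimes\Q$; but $\Gamma^*$ is induced by a morphism of Hodge structures $H^1(C,\Q)\rar H^1(C,\Q)$ between $2g$-dimensional $\Q$-vector spaces, and surjectivity at the level of intermediate Jacobians forces this underlying morphism to be surjective, hence an isomorphism. Therefore $AJ_1\circ\phi_{\hom}=\Gamma^*\circ AJ_C$ is an isomorphism; since $\phi_{\hom}$ is surjective while the composition is injective, $\phi_{\hom}$ must itself be injective, hence an isomorphism, which in turn forces $AJ_1$ to be an isomorphism.

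I expect the main technical step to be making the identification of $\phi$ with a genuine correspondence-induced morphism explicit enough to invoke the Deligne-cohomology compatibility underlying Lemma \ref{commute}; this amounts to pushing forward the fundamental class of $\tilde{\cP}$ along the composite $\tilde{\cP}\rar C\times\ov{X}$ obtained from $\tilde{\cP}\rar\tilde{\cH}\rar C$ and $f:\tilde{\cP}\rar\ov{X}$, and matching the resulting Chow- and Hodge-theoretic actions with $\phi_{\hom}$ and $\Gamma^*$ respectively. Once this is in place, the remaining inputs, namely the isomorphism \eqref{Kiso}, the surjectivity of \eqref{AJSigma}, and Abel-Jacobi for $C$, are all already present in the paper.
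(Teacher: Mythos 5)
Your proposal is correct and follows essentially the same route as the paper: both arguments sandwich $\CH_1(\ov{X};\Q)_{\hom}$ between two surjections $Jac(C)\otimes\Q \rar \CH_1(\ov{X};\Q)_{\hom} \rar Jac(C)\otimes\Q$ (from Corollary \ref{surjSigma} and \eqref{AJSigma}) and observe that the composite, being induced by a correspondence, is a surjective endomorphism of $Jac(C)\otimes\Q$ and hence an isomorphism, forcing both maps to be isomorphisms. Your added details — the degree argument showing $\phi$ restricts surjectively onto the null-homologous part, and the explicit identification of the composite with a Hodge-structure endomorphism of $H^1(C,\Q)$ — merely flesh out steps the paper leaves implicit.
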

\begin{proof}
Using \eqref{AJSigma} and Corollary \ref{surjSigma}, we note that all the maps in the following sequence
\begin{equation}\label{arrow}
Jac(C)\otimes \Q \rar \CH_1(\ov{X};\Q)_{\hom}  \rar Jac(C)\otimes \Q
\end{equation}
are surjective. Furthermore, all the maps are in fact motivated by correspondences. So a multiple of the composed map $Jac(C)\otimes \Q\rar Jac(C)\otimes \Q$ lifts to a surjective endomorphism of abelian varieties, and hence it is an isomorphism. Hence all the maps in \eqref{arrow} are isomorphisms.
\end{proof}

%%%%%%%%%%%%%%%%%%%%%%%%%%%%%%%%%%%%%%%%%%%%%%%%%%%%%%%%%%%%%%%%%%%%%%%%%%%%%%%%%%%%%%%%%%%%%%%%%%%%%%%%%%%%%%
\subsection{ Identification of the relative Chow group with the Chow group of one cycles}\label{identification}

Recall that there is a sequence (see \eqref{E27}):
\begin{equation}
 \ul{\CH}^r(\ol{Y},1) \to \ul{\CH}^r(\ol{X},\ol{Y})\to \CH^r(\ol{X})\xrightarrow{\alpha} \ul{\CH}^r(\ol{Y}).
\end{equation}
Our interest here is the case $r=n-1$. One has the Gysin map
$$
Gy^{\ast}:=\alpha:\CH^{n-1}(\ol{X};\QQ) \to  \ul{\CH}^{n-1}(\ol{Y};\QQ).
$$
We refer to the pair $(\ov{X},\ov{Y})$, where $\ov{Y}$ is the NCD
in Kirwan's log resolution $\ol{X}$.

\begin{lemma}\label{Gysinzero}
The Gysin map $Gy^*$ is the zero map.
\end{lemma}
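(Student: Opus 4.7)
The plan is to combine the Chow generation of $\CH^{n-1}(\ov{X};\Q) = \CH_1(\ov{X};\Q)$ by Hecke curves (Corollary \ref{surjSigma}) with a codimension count showing that a generic Hecke curve lies in the stable locus of $\cSU_C(2,\cO_C)$, hence its strict transform in $\ov{X}$ avoids the exceptional divisor $\ov{Y}$. Since by Corollary \ref{surjSigma} there is a surjection $\CH_0(C;\Q) \twoheadrightarrow \CH_1(\ov{X};\Q)$ sending $[x]$ to $f'_*[\tilde{\cP}_t]$ for any $t \in \tilde{\cH_x}$, it suffices to show that each such generator is killed by $Gy^*$.

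First I would observe that the restriction $\tilde{\cP}|_{\tilde{\cH_x}} \to \tilde{\cH_x}$ is a $\p^1$-bundle over the irreducible base $\tilde{\cH_x}$ (being pulled back from the $\p^1$-bundle $\cP_x \to \cSU_C(2,\cO(-x))$ along the birational morphism $\tilde\eta_x$). A flat family of cycles over a connected base yields rationally equivalent cycles under pushforward by a proper morphism, so the classes $f'_*[\tilde{\cP}_t] \in \CH_1(\ov{X};\Q)$ coincide as $t$ varies in $\tilde{\cH_x}$; in particular they have the same Gysin image in $\ul{\CH}^{n-1}(\ov{Y};\Q)$.

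Next I would show that for a general $t_0 \in \tilde{\cH_x}$, the corresponding Hecke curve in $\ov{X}$ avoids $\ov{Y}$. The singular locus $S = K(C)$ of $\cSU_C(2,\cO_C)$ has dimension $g$ while $\cSU_C(2,\cO_C)$ has dimension $3g-3$, so $\mathrm{codim}(S) = 2g-3 \geq 5$ whenever $g\geq 4$. Consequently the preimage $f_x^{-1}(S)$ in the $\p^1$-bundle $\cP_x$ has codimension at least two there, so a generic $\p^1$-fiber of $\cP_x \to \cSU_C(2,\cO(-x))$ avoids it. Such a fiber maps under the Hecke correspondence to a Hecke curve lying entirely in the stable locus $\cSU_C^s(2,\cO_C)$. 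Since the Kirwan morphism $h:\ov{X}\to \cSU_C(2,\cO_C)$ restricts to an isomorphism over the stable locus and $\ov{Y} = h^{-1}(S)$ set-theoretically, the strict transform of this Hecke curve lies in $\ov{X}\setminus \ov{Y}$; hence its Gysin image, given by proper intersection with $\ov{Y}$, is zero in $\ul{\CH}^{n-1}(\ov{Y};\Q)$.

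Putting the two paragraphs together, every fiber $f'_*[\tilde{\cP}_t]$ for $t\in \tilde{\cH_x}$ has zero Gysin image, and as $x$ ranges over $C$ these generate $\CH^{n-1}(\ov{X};\Q)$, whence $Gy^* = 0$. The main subtlety I anticipate is checking that the $Gy^*$ map from \eqref{E27} really respects rational equivalence in the way used above, i.e.\ that intersection with $\ov{Y}$ descends to a well-defined map on Chow classes into $\ul{\CH}^{n-1}(\ov{Y};\Q)$; this is built into the construction in the Appendix (via moving of cycles and the standard excess intersection formalism), but one should confirm it explicitly for the specialisation from a generic $t$ to an arbitrary $t_0\in \tilde{\cH_x}$. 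The codimension estimate above is also where the hypothesis $g\geq 4$ enters in a concrete way.
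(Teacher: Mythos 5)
Your overall strategy coincides with the paper's: generate $\CH_1(\ov{X};\Q)$ by the fibres of the families $\tilde{\cP}|_{\tilde{\cH_x}}\to\tilde{\cH_x}$, use the codimension estimate to find a member of each family lying entirely in $\ov{X}\setminus\ov{Y}$, and conclude that every generator has trivial Gysin image. However, there is a genuine gap in your second paragraph. The assertion that ``a flat family of cycles over a connected base yields rationally equivalent cycles under pushforward by a proper morphism'' is false: flatness over a connected base only gives \emph{algebraic} equivalence of the fibres, not rational equivalence (consider the tautological family of points on a curve of positive genus parametrized by the curve itself). Since the Gysin map $Gy^*$ of \eqref{E27} is a map on Chow groups, i.e.\ on rational equivalence classes, algebraic equivalence is not enough to transfer the vanishing from a generic fibre to an arbitrary one. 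The paper closes exactly this gap by observing that the base $\tilde{\cH_x}$ is rationally \emph{chain connected} (it dominates, birationally, the rationally connected variety $\cSU_C(2,\cO(-x))$), so that any point $t_0\in\tilde{\cH_x}$ is joined to a generic point $t$ by a chain of rational curves; specialization along such a chain does produce a rational equivalence between $f'_*[\tilde{\cP}_{t_0}]$ and $f'_*[\tilde{\cP}_{t}]$. Your argument is repaired by substituting this for the flatness claim.

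A second, more minor point: your codimension count reasons about $f_x^{-1}(S)$ as if $f_x$ were everywhere a $\p^1$-bundle, but the fibres of the Hecke morphism over the non-stable locus can jump in dimension, so the preimage of $S$ need not have the expected codimension a priori. The clean statement to invoke is the estimate $\mathrm{codim}(\p-U)\geq 3$ for $g\geq 3$ from Arapura--Sastry (recorded as \eqref{codim} in the paper), which directly guarantees that a generic fibre of $\pi$ lies in $U=f^{-1}\cSU_C^s(2,\cO_C)$ and hence maps into the stable locus. With these two repairs your proof matches the paper's.
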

\begin{proof}
Recall the Hecke correspondence, for any $x\in C$:
\begin{eqnarray*}
{\cP_x} & \rar & \cSU_C(2,\cO_C) \\
\downarrow   & & \\
\cSU_C(2,\cO(-x)).
\end{eqnarray*}
The images of the fibers of the $\p^1$-bundle are the Hecke curves on $\cSU_C(2,\cO_C)$.
Suppose that $\alpha_0$ is a one-cycle on $\cSU_C(2,\cO_C)$, and $\alpha_0$ intersects the singular locus  non trivially. Since $CH_1(\cSU_C(2,\cO_C);\Q)$ is generated by Hecke curves 
(see Corollary \ref{Hecke}), we can assume that $\alpha_0$ is a Hecke curve.
Now most of the Hecke curves lie completely, on the open part $\cSU_C^s(2,\cO_C)$ (see the codimension estimates in \cite[\S 4]{Ar-Sa}, for $g\geq 3$). Since the parameter space $\cSU_C(2,\cO(-x))$ is a rationally connected variety, the cycle $\alpha_0$ can be moved in its rational equivalence to a cycle which is a Hecke curve lying completely inside $\cSU_C^s(2,\cO_C)$. 

This argument can be repeated for the following correspondence, and for any $x\in C$:
\begin{eqnarray*}
{\tilde{\cP_x}} & \rar & \ov{X} \\
\downarrow   & & \\
\tilde{\cH_x}.
\end{eqnarray*}

We again observe that the fibers of this $\p^1$-bundle generate $CH_1(\ov{X};\Q)$, when we take all $x\in C$ (see Lemma \ref{genKirwan}). Due to the above codimension estimates, most of these curves lie completely inside $\ov{X}-\ov{Y}=\cSU_C^s(2,\cO_C)$. Since $\tilde{\cH_x}\rar \cSU_C(2,\cO_C(-x))$ is a birational morphism, $\tilde{\cH_x}$ is a rationally chain connected variety. Hence if $\alpha_0$ is any curve from this family, then $\alpha_0$ is rationally equivalent to a cycle lying completely inside $\ov{X}-\ov{Y}$. This implies that $Gy^*(\alpha_0)$ is zero.

\end{proof} 

\begin{corollary}\label{gysinzerocor}
We have an isomorphism
$$
\frac{\ul{\CH}^{n-1}(\ol{X},\ol{Y};\Q)}{\ul{\CH}^{n-1}(\ol{Y},1)} \simeq \CH_1(\ov{X};\Q).
$$
\end{corollary}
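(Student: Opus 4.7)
The statement reduces directly to a diagram chase once one combines the exact sequence \eqref{E27} with Lemma \ref{Gysinzero}, so the main work has already been done.

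The plan is to specialize the exact sequence
$$
\ul{\CH}^{r}(\ol{Y},1;\Q) \to \ul{\CH}^{r}(\ol{X},\ol{Y};\Q)\to \CH^{r}(\ol{X};\Q)\xrightarrow{\alpha} \ul{\CH}^{r}(\ol{Y};\Q)
$$
to the case $r=n-1$. Since $\ov{X}$ is smooth projective of dimension $n$, the identification $\CH^{n-1}(\ov{X};\Q)=\CH_{1}(\ov{X};\Q)$ is immediate. By Lemma \ref{Gysinzero}, the Gysin map $\alpha=Gy^{\ast}$ in degree $n-1$ is identically zero, so exactness at $\CH^{n-1}(\ov{X};\Q)$ forces the map $\ul{\CH}^{n-1}(\ol{X},\ol{Y};\Q)\to \CH^{n-1}(\ov{X};\Q)$ to be surjective.

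Exactness at $\ul{\CH}^{n-1}(\ol{X},\ol{Y};\Q)$ then identifies the kernel of this surjection with the image of the group $\ul{\CH}^{n-1}(\ol{Y},1;\Q)$ in $\ul{\CH}^{n-1}(\ol{X},\ol{Y};\Q)$. Passing to the quotient yields
$$
\frac{\ul{\CH}^{n-1}(\ol{X},\ol{Y};\Q)}{\ul{\CH}^{n-1}(\ol{Y},1;\Q)} \xrightarrow{\ \sim\ } \CH^{n-1}(\ov{X};\Q)=\CH_{1}(\ov{X};\Q),
$$
which is the asserted isomorphism.

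There is essentially no obstacle here; the only thing worth being careful about is the notational convention that $\ul{\CH}^{n-1}(\ol{Y},1;\Q)$ in the denominator really means its image inside $\ul{\CH}^{n-1}(\ol{X},\ol{Y};\Q)$ under the connecting/boundary map from \eqref{E27}. All the substance has been absorbed into the geometric vanishing of the Gysin map, which in turn relied on the Chow generation by Hecke curves (Corollary \ref{Hecke}, Lemma \ref{genKirwan}) together with the codimension estimates of Arapura--Sastry that let one move a generic Hecke curve off $\ov{Y}$.
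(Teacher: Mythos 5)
Your proof is correct and is precisely the argument the paper intends: the corollary is stated without proof immediately after Lemma \ref{Gysinzero} because it follows from specializing the exact sequence \eqref{E27} to $r=n-1$ and using the vanishing of $\alpha=Gy^{\ast}$, exactly as you do. Your remark that the denominator should be read as the image of $\ul{\CH}^{n-1}(\ol{Y},1;\Q)$ inside $\ul{\CH}^{n-1}(\ol{X},\ol{Y};\Q)$ is the right reading of the paper's notation.
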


\subsection{Proof of main theorem}

We use the identification, proved in Corollary \ref{gysinzerocor}:

$$
\CH^{n-1}(\ol{X};\QQ) = 
 {\rm Im}\big(\ul{\CH}^{n-1}(\ol{X},\ol{Y};\QQ)\to \CH^{n-1}(\ol{X};\QQ)\big)
$$
 \[
\simeq \frac{\ul{\CH}^{n-1}(\ol{X},\ol{Y};\QQ)}{\ul{\CH}^{n-1}(\ol{Y},1;\QQ)}.
\]
and note that the below Abel-Jacobi maps commute
\begin{eqnarray*}
 \ul{\CH}^{n-1}_{\hom}(\ov{X},\ov{Y};\Q) & \rar & \CH^{n-1}_{\hom}(\ov{X};\Q) \\
 \downarrow AJ^c & & \downarrow AJ_1 \\
 IJ(H^{2n-3}_c(U;\Q)) & \rar & Jac(C)\otimes \Q.
 \end{eqnarray*}

Hence, we now deduce our main Theorem on the induced AJ map (see \eqref{inducedAJ});

\begin{theorem}
Given a smooth connected projective curve $C$ of genus $g\geq 4$, consider  Kirwan's log resolution $h:\ov{X}\rar \cSU_C(2,\cO_C)$. The inverse image of the singular locus is a normal crossing divisor $\ov{Y}\subset \ov{X}$. Then the induced Abel-Jacobi map
\[
 \frac{\ul{\CH}^{n-1}_{\hom}(\ol{X},\ol{Y};\QQ)}{\ul{\CH}^{n-1}(\ol{Y},1;\QQ)^o}\lrar IJ (H^{2n-3}_c(U,\Q))\simeq Jac(C)\otimes \Q
\]
is an isomorphism.
\end{theorem}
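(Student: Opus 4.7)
The plan is to assemble the theorem from the three ingredients already established in the preceding sections: the identification of the relative Chow group with the usual Chow group of one-cycles on $\ov X$ (Corollary \ref{gysinzerocor}), the Abel--Jacobi isomorphism on $\CH_1(\ov X;\Q)_{\hom}$ (Corollary \ref{motive}), and the commutative square relating the relative and absolute Abel--Jacobi maps that was recorded just above the theorem.

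First, I would use the localization-type exact sequence
\[
\ul{\CH}^{n-1}(\ov Y,1;\Q) \to \ul{\CH}^{n-1}(\ov X,\ov Y;\Q) \to \CH^{n-1}(\ov X;\Q) \xrightarrow{Gy^{*}} \ul{\CH}^{n-1}(\ov Y;\Q),
\]
together with the vanishing $Gy^{*}=0$ from Lemma \ref{Gysinzero}, to conclude that the map $\ul{\CH}^{n-1}(\ov X,\ov Y;\Q)\to \CH^{n-1}(\ov X;\Q)=\CH_1(\ov X;\Q)$ is surjective with kernel the image of $\ul{\CH}^{n-1}(\ov Y,1;\Q)$. Restricting to cycles homologous to zero and passing to the subgroup $\ul{\CH}^{n-1}(\ov Y,1;\Q)^{o}$ (defined as the preimage of $\CH_1(\ov X;\Q)_{\hom}$), I would obtain the identification
\[
 \frac{\ul{\CH}^{n-1}_{\hom}(\ol{X},\ol{Y};\QQ)}{\ul{\CH}^{n-1}(\ol{Y},1;\QQ)^{o}} \;\simeq\; \CH_1(\ov X;\Q)_{\hom}.
\]

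Next, I would invoke the commutative diagram (recorded just before the theorem statement) relating $AJ^{c}$ on the relative group to the usual $AJ_1$ on $\CH_1(\ov X;\Q)_{\hom}$, together with the isomorphism $IJ(H^{2n-3}_{c}(U,\Q))\simeq IJ(H^{2n-3}(\ov X,\Q))\simeq Jac(C)\otimes\Q$ that follows from the Hard Lefschetz isomorphism proved in the compact case in Section \ref{compactAJsurj} and the identification (\ref{Kiso}). Under the identification of the previous paragraph, the induced Abel--Jacobi map in the statement becomes precisely the composition of the isomorphism in Corollary \ref{gysinzerocor} with the map $AJ_1:\CH_1(\ov X;\Q)_{\hom}\to Jac(C)\otimes\Q$, which is an isomorphism by Corollary \ref{motive}. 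Therefore the induced map is an isomorphism as well.

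The main obstacle in this chain of reasoning is conceptual rather than computational: it is the verification that the passage from $\ul{\CH}^{n-1}(\ov Y,1;\Q)$ to its subgroup $\ul{\CH}^{n-1}(\ov Y,1;\Q)^{o}$ is precisely the one forced by the homological condition on $\ov X$ (so that the quotient on the left really is $\CH_1(\ov X;\Q)_{\hom}$ and not a larger object), and the verification that the square relating $AJ^{c}$ and $AJ_1$ commutes. The latter uses that both Abel--Jacobi maps are realized through Deligne cohomology cycle classes, which are compatible under pullback along the open inclusion $U\hookrightarrow \ov X$ and under the identifications $H^{2n-3}_{c}(U,\Q)\simeq H^{2n-3}(\ov X,\ov Y;\Q)$, so the diagram in Section \ref{SEC002} commutes by the general functoriality of Deligne cohomology recalled in Section \ref{apprAJ}. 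Once this compatibility is spelled out, the theorem follows formally by combining Corollaries \ref{gysinzerocor} and \ref{motive}.
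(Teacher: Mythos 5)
Your proposal is correct and follows essentially the same route as the paper: the identification of the quotient of the relative Chow group with $\CH_1(\ov{X};\Q)$ via the vanishing of the Gysin map (Corollary \ref{gysinzerocor}), the commutative square relating $AJ^c$ and $AJ_1$, and the isomorphism of $AJ_1$ from Corollary \ref{motive}. The one point you flag as a conceptual obstacle — that the quotient by $\ul{\CH}^{n-1}(\ol{Y},1;\Q)^{o}$ really yields $\CH_1(\ov{X};\Q)_{\hom}$ and that the square commutes via Deligne cycle classes — is treated equally briefly in the paper itself, so your level of detail matches the original.
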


%%%%%%%%%%%%%%%%%%%%%%%%%%%%%%%%%%%%%%%%%%%%%%%%%%%%%%%%%%%%%%%%%%%%%%%%%%%%%%%%%%%%%%%%%%%%%%%%%%%%%%%%%%%%%%%%%%%%%%
\section{Appendix: Relative Chow groups and Abel-Jacobi maps, by J. Lewis}\label{appendix}

%%%%%%%%%%%%%%%%%%%%%%%%%%%%%%%%%%%%%%%%%%%%%%%%%%%%%%%%%%%%%%%%%%%%%%%%%%%%%%%%%%%%%%%
\subsection{Relative Chow groups}\label{cohomCH}

Most of the ideas for this section originate from \cite{K-L}; however
it is much easier for the reader to see these ideas translated
in the context of this paper.
Suppose $X/\CC$ is a projective variety of dimension $n$,
and $Y\subset X$ a proper closed algebraic subset containing
the singular locus of $X$. Put $U = X\bs Y$.
Consider a proper modification $(\ol{X},\ol{Y})$
of the pair $(X,Y)$, where $\ol{X}$ is now smooth projective, 
$\ol{Y}\subset \ol{X}$ is a NCD, and $U = \ol{X}\bs \ol{Y}$.
The definition of relative Chow cohomology
suitable for our purposes can be found in \cite{K-L}, which also coincides
with the corresponding description of the MHS  $H^{i}(\ol{X},\ol{Y},\Q(j))$.
Write the NCD
\[
\ol{Y} = \bigcup_{i=0}^N\ol{Y}_i,
\]
where $\ol{Y}_i$ is a smooth divisor in $\ol{X}$. For an integer
$t\geq 0$, put $\ol{Y}^{[t]} =$  disjoint union of the $t$-fold intersections
of the  components of $\ol{Y}$ with corresponding coskeleton
$\ol{Y}^{[\bullet]}$. We also put $\ol{Y}^{(t)}$ to be the union of the
$t$-fold intersections of the components of $\ol{Y}$, where we observe
that $Y^{[t]}$ is the ``canonical'' desingularization
of $\ol{Y}^{(t)}$. We have a simplicial
complex
\[
\ol{Y}^{[\bullet]} \xrightarrow{\Gy} \ol{Y}\subset \ol{X} =: \ol{Y}^{[0]},
\]
which is used to calculate $\ul{\CH}^r(\ol{X},\ol{Y},m)$, where $m\geq 0$
includes Bloch's higher Chow groups \cite{Bloch} as well. The definition even in
the case $m=0$ requires some formalism related to the higher Chow
groups. Recall the $m$-simplex
\[
\Delta^m := \Spec\biggl(\frac{\CC[t_0,...,t_m]}{\big(1-\sum_{j=0}^mt_j\big)}\biggr),
\]
with $j$-th face given by $t_j=0$. Then put
\[
z^r(\ol{X},m) := \big\{\xi\in z^r(\ol{X}\times \Delta^m)\ \big|\ \xi\ {\rm meets \ all \  multiindex\
faces\ properly}\big\}.
\]
So one has $\del := \sum_{j=0}^m(-1)^j\del_j : z^r(\ol{X},m) \to z^r(\ol{X},m-1)$,
with $\del^2=0$. We will also assume that $z^r(\ol{X},m)$ is
defined so that all such $\xi$ meet $\ol{Y}^{(\bullet)}$
properly as well. We then have a double complex with associated simple
complex and differential $D := \del \pm \Gy^{\ast}$:
\begin{equation}\label{E66}
\begin{matrix}
&\del\biggl\downarrow\quad&&\del\biggl\downarrow\quad&&
\del\biggl\downarrow\quad\\
\cdots\xleftarrow{\Gy^{\ast}}&z^r(\ol{Y}^{[2]},2)&\xleftarrow{\Gy^{\ast}}&z^r
(\ol{Y}^{[1]},2)&\xleftarrow{\Gy^{\ast}}&z^r(\ol{X},2)\\
&\del\biggl\downarrow\quad&&\del\biggl\downarrow\quad&&
\del\biggl\downarrow\quad\\
\cdots\xleftarrow{\Gy^{\ast}}&z^r(\ol{Y}^{[2]},1)&\xleftarrow{\Gy^{\ast}}&z^r
(\ol{Y}^{[1]},1)&\xleftarrow{\Gy^{\ast}}&z^r(\ol{X},1)\\
&\del\biggl\downarrow\quad&&\del\biggl\downarrow\quad&&
\del\biggl\downarrow\quad\\
\cdots\xleftarrow{\Gy^{\ast}}&z^r(\ol{Y}^{[2]},0)&\xleftarrow{\Gy^{\ast}}&z^r
(\ol{Y}^{[1]},0)&\xleftarrow{\Gy^{\ast}}&z^r(\ol{X},0)
\end{matrix}
\end{equation}
The cohomology of the diagonal 
\[
\bigoplus_{i+j=0; i\leq 0,j\geq 0}z^r(\ol{Y}^{[-i]},j),
\]
computes the relative Chow cohomology $\ul{\CH}^r(\ol{X},\ol{Y})$ introduced
in \cite{K-L}. [Note: Eliminating the right hand column of
(\ref{E66}) amounts to a double complex that computes
the Chow cohomology $\ul{\CH}^{\bullet}(\ol{Y},\bullet)$. Note that
for smooth projective $V$, $\ul{\CH}^r(V) = \CH^r(V)$, where the latter is
the usual Fulton Chow (homology) group.] Note that there is a natural map
$$
\ul{\CH}^r(\ol{X},\ol{Y}) \to \CH^r(\ol{X})
$$ 
which mirrors the
cohomological situation 
$$H^{\bullet}(\ol{X},\ol{Y}) \to H^{\bullet}(\ol{X}).
$$
Formally from (\ref{E66}) we have an exact sequence of \underbar{cohomology} groups:
\begin{equation}\label{E27}
\to  \ul{\CH}^r(\ol{Y},1) \to \ul{\CH}^r(\ol{X},\ol{Y})\to \CH^r(\ol{X})\xrightarrow{\alpha} \ul{\CH}^r(\ol{Y}),
\end{equation}
where, as in the cohomological situation,
 $\alpha$ need not be surjective (e.g. $\ol{X} = \PP^3$, $\ol{Y}\subset \ol{X}$ a smooth surface of degree $\geq 4$ and $r=2$). Note that there is a map
\[
\ul{\CH}^r(\ol{Y}) \to \CH^r(\ol{Y}^{[1]}),
\]
and that under this map:
\[
\alpha\big(\CH^r(\ol{X})\big) \ {\rm goes\ to}\   \CH_{\ker\Gy}^r(\ol{Y}^{[1]}).
\]
Now let us return to the situation of Kirwan's
resolution $(\ol{X},\ol{Y})$. Regarding the map $\ul{\CH}^{n-1}(\ol{Y},1) \to \CH^{n-1}(\ol{X},\ol{Y})$, let
$\ul{\CH}^{n-1}(\ol{Y},1)^{\circ}$ be the inverse image of $\CH_{\hom}^{n-1}(\ol{X},\ol{Y})$.
By definition of the Chow cohomology groups, there are maps:
\[
\ul{\CH}^{n-1}(\ol{X},\ol{Y};\Q) \to H^{2n-2}_{\Dd}(\ol{X},\ol{Y},\Q(n-1)),
\]
\[
\ul{\CH}_{\hom}^{n-1}(\ol{X},\ol{Y};\Q) \to J\big(H^{2n-3}(\ol{X},\ol{Y},\Q(n-1))\big)
\]
\[
\ul{\CH}^{n-1}(\ol{Y},1;\Q) \to H^{2n-2}_{\Dd}(\ol{Y},\Q(n-1)).
\]
This makes use of the  short exact sequences:
$$
0\to J\big(H^{2n-4}(\ol{X},\ol{Y},\Q(n-1))\big) \to H^{2n-2}_{\Dd}(\ol{X},\ol{Y},\Q(n-1))
\to  \Gamma \big(H^{2n-3}(\ol{X},\ol{Y},\Q(n-1))\big)\to 0,
$$
$$
0\to J\big(H^{2n-4}(\ol{Y},\Q(n-1))\big) \to H^{2n-2}_{\Dd}(\ol{Y},\Q(n-1))
\to  \Gamma \big(H^{2n-3}(\ol{Y},\Q(n-1))\big)\to 0;
$$
moreover by a weight argument, 
\[
\Gamma\big(H^{2n-3}(\ol{Y},\QQ(n-1))\big) := \hom_{\MHS}\big(\QQ(0),H^{2n-3}(\ol{Y},\QQ(n-1))\big)
= 0.
\]
But by purity of the Hodge structure $H^{2n-3}(\ol{X},\ol{Y},\QQ(n-1)) \simeq H^{2n-3}_c(U,\QQ(n-1))$,
where $U = \ol{X}-\ol{Y}$, the image 
\[
J\big(H^{2n-4}(\ol{Y},\QQ(n-1))\big) \to  J\big(H^{2n-3}(\ol{X},\ol{Y},\QQ(n-1))\big),
\]
is zero. Hence there is an induced Abel-Jacobi map:
\begin{equation}\label{inducedAJ}
\frac{\CH^{n-1}_{\hom}(\ol{X},\ol{Y};\QQ)}{\ul{\CH}^{n-1}(\ol{Y},1;\QQ)^{\circ}}
\to J\big(H^{2n-3}(\ol{X},\ol{Y},\QQ(n-1))\big).
\end{equation}

%%%%%%%%%%%%%%%%%%%%%%%%%%%%%%%%%%%%%%%%%%%%%%%%%%%%%%%%%%%%

\end{document}